\theoremstyle{plain}
\newtheorem{Theorem}{Theorem}[section]
\newtheorem{Proposition}[Theorem]{Proposition}
\newtheorem{Corollary}[Theorem]{Corollary}
\newtheorem{Lemma}[Theorem]{Lemma}
\newenvironment{Proof}
{\begin{trivlist}\item[]{{\sc Proof.}}}{\hfill{$\square$}\noindent\end{trivlist}}
\theoremstyle{definition}
\newtheorem{Definition}[Theorem]{Definition}
\newtheorem{Example}[Theorem]{Example}
\theoremstyle{remark}
\newtheorem{Remark}[Theorem]{Remark}
\newcommand{\PG}{\operatorname{PG}}
\newcommand{\F}{\mathbb{F}}
\newcommand{\N}{\mathbb{N}}
\newcommand{\cS}{\mathcal{S}}
\newcommand{\cM}{\mathcal{M}}
\newcommand{\cP}{\mathcal{P}}
\newcommand{\cX}{\mathcal{X}} 
\newcommand{\cV}{\mathcal{V}}
\begin{document}


\title[O\MakeLowercase{ptimal} \MakeLowercase{additive} \MakeLowercase{quaternary} \MakeLowercase{codes} \MakeLowercase{of} \MakeLowercase{dimension} $3.5$]{O\MakeLowercase{ptimal} \MakeLowercase{additive} \MakeLowercase{quaternary} \MakeLowercase{codes} \MakeLowercase{of} \MakeLowercase{dimension} $\mathbf{3.5}$ \MakeLowercase{and} $\mathbf{4}$}

\author{\small S\MakeLowercase{ascha} K\MakeLowercase{urz}\\ \footnotesize U\MakeLowercase{niversity} \MakeLowercase{of} B\MakeLowercase{ayreuth}, G\MakeLowercase{ermany}\\ \MakeLowercase{sascha.kurz@uni-bayreuth.de}}

\date{}

\begin{abstract}
  After the optimal parameters of additive quaternary codes of dimension $k\le 3$ have been determined in \cite{bierbrauer2021optimal}, 
  there was some activity to settle the next case of dimension $k=3.5$ \cite{guan2023some,10693309}. Here we complete dimensions $k=3.5$ 
  and 
  $k=4$. We also solve the problem of the optimal parameters of additive quaternary codes of arbitrary 
  dimension when assuming a sufficiently large minimum distance. 
  
  \medskip
  
  \noindent
  \textbf{Keywords:} additive codes, linear codes, quaternary codes, Galois geometry
  
  \smallskip
  
  \noindent
  \textbf{Mathematics Subject Classification:} 94Bxx, 51E22     
\end{abstract}

\maketitle

\section{Introduction}
\label{sec_intro}

A quaternary block code $C$ of length $n$ is a subset of $\F_4^n$. If $C$ is closed under componentwise addition then $C$ is called additive. If $C$ is additive and 
closed under $\F_4$ scalar multiplication then $C$ is called linear. The parameter $k$ such that the number of codewords $|C|$ equals $4^k$ is called the dimension of $C$. 
Clearly, $k$ is an integer if $C$ is linear and a half-integer if $C$ is additive. For each integer $s$ let $n_k(s)$ denote the maximal length $n$ such that 
an additive quaternary code of length $n$, dimension $k$, and minimum Hamming distance $n-s$ exists. For $k\le 3$ the function $n_k(s)$ was completely determined in 
\cite{bierbrauer2021optimal}. In the sequence of papers \cite{guan2023some,10693309} the determination of $n_{3.5}(s)$ was narrowed down to $s\in \{6,7,12\}$.\footnote{The example for $s=13$ 
refers to \cite{kurz2024computer}.} Geometrically, $n_k(s)$ is the maximum number of lines in the projective space $\PG(2k-1,2)$ such that each hyperplane contains at most $s$ lines, 
see e.g.\ \cite{bierbrauer2021optimal}. The aim of this paper is to completely determine $n_{3.5}(s)$, $n_{4}(s)$, and $n_k(s)$ for all sufficiently large $s$. 

The remaining part of the paper is structured as follows. In Section~\ref{sec_preliminaries} we introduce the necessary preliminaries. The
problem of the optimal parameters of additive quaternary codes of arbitrary dimension, assuming a sufficiently large minimum distance, is solved in 
Section~\ref{sec_solomon_stiffler}, see Corollary~\ref{cor_attained_asymptotically}. The determination of $n_{3.5}(s)$ and 
$n_{4}(s)$ is 
obtained in Section~\ref{sec_exact}. It turns out that there exist infinite series of additive codes whose parameters outperform those of linear codes.  

\section{Preliminaries}
\label{sec_preliminaries}
The set of all subspaces of $\F_2^r$, ordered by the incidence relation
$\subseteq$, is called \emph{$(r-1)$-dimensional projective geometry over
$\F_2$} and denoted by $\PG(r-1,2)$. Employing this algebraic notion
of dimension instead of the geometric one, we will use the term $i$-space
to denote an $i$-dimensional subspace of $\F_2^r$. To highlight the
important geometric interpretation of subspaces we will call $1$-, $2$-,
and $(r-1)$-spaces points, lines, and hyperplanes, respectively. Every 
$i$-space in $\PG(r-1,2)$, where $r\ge i$, contains exactly $2^i-1$ 
points. For two subspaces $S$ and $S'$ we write $S\subseteq S'$ if $S$ is contained 
in $S'$. Moreover, we say that $S$ and $S'$ are \emph{incident} iff $S\subseteq S'$ 
or $S\supseteq S'$.
\begin{Definition}
  An \emph{$(n,r,s)$ system} is a multiset $\cS$ of $n$ lines in $\PG(r-1,2)$ 
  such that each hyperplane contains at most $s$ elements from $\cS$ and some 
  hyperplane contains exactly $s$ elements of $\cS$. We say that $\cS$ is 
  \emph{spanning} iff $s<n$. 
\end{Definition}
By $n_k(s)$ we denote the maximum $n$ such that a spanning $(n,2k,s)$ system exists, which is the same as the 
maximal length $n$ of an additive quaternary code with dimension $k$ and minimum Hamming distance $n-s$, 
see e.g.\ \cite{bierbrauer2021optimal}. So, we will always assume $2(s+1)\ge k$ when considering $n_k(s)$.   
\begin{Definition}
  For an $(n,r,s)$ system $\cS$ let $\cP(\cS)$ denote the multiset of points that we obtain
  by replacing each element of $\cS$ by its contained three points.
\end{Definition}
We also call a multiset of points spanning iff no hyperplane contains all points. If $C$ is a binary linear code with length $n$ 
and minimum Hamming distance $d$, then we say that $C$ is an $[n,k,d]_2$ code, where $2^k=|C|$. Given a generator matrix $G$ 
for $C$ we can construct a multiset of points from $C$ by considering the span of each column of $G$ as a point in $\PG(k-1,2)$. 
And indeed, it is well known that a spanning multiset of points $\cP$ in $\PG(k-1,2)$, such that at most $s$ elements are contained 
in a hyperplane and some hyperplane contains exactly $s$ elements, is in one-to-one correspondence to a linear $[n,k,n-s]_2$ code $C$, 
see e.g.\ \cite[{\S}1.1.2]{tsfasman1991agcodes} or \cite{dodunekov1998codes}. Let us write $\cP=\cX(C)$ and $C=\cX^{-1}(\cP)$ for this 
correspondence. The elements of a code are called \emph{codewords}. The \emph{weight} of a codeword $c\in C$ of a linear code is the
number of non-zero entries in $c$. So, the minimum occurring non-zero weight of a linear code coincides with its minimum distance.
We call a linear code \emph{$\Delta$-divisible} if the weights of all codewords are divisible by $\Delta$. If $\Delta$ equals $2$ or $4$ then 
we also speak of even and doubly-even codes, respectively. 
\begin{Lemma}(Cf.~\cite[Lemma 1]{bierbrauer2021optimal})
  \label{lemma_binary_code}
  Let $\cS$ be a spanning $(n,r,s)$ system. Then, $C:=\cX^{-1}(\cP(\cS))$ is a $2$-divisible $[3n,r,2(n-s)]_2$ code with maximum weight at most $2n$. 
\end{Lemma}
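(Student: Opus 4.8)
The plan is to describe the nonzero codewords of $C=\cX^{-1}(\cP(\cS))$ geometrically and to read off every claimed property from a single incidence count. First I would invoke the dictionary recalled just before the lemma: choosing a generator matrix whose columns represent the points of $\cP(\cS)$, the nonzero codewords of $C$ are in bijection with the hyperplanes of $\PG(r-1,2)$, and the weight of the codeword attached to a hyperplane $H$ equals $3n-|\cP(\cS)\cap H|$, i.e.\ the number of points of $\cP(\cS)$ lying off $H$, counted with multiplicity. The length is $|\cP(\cS)|=3n$ because each of the $n$ lines of $\cS$ contributes its three points. For the dimension I would note that $\cP(\cS)$ is spanning: a hyperplane containing every point of $\cP(\cS)$ would contain all three points of each line of $\cS$, hence all $n$ lines, contradicting $s<n$; therefore the generator matrix has full rank $r$ and $\dim C=r$.

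The key step is the computation of $|\cP(\cS)\cap H|$ for a fixed hyperplane $H$. For a line $\ell$ of $\PG(r-1,2)$ one has either $\ell\le H$, in which case $\ell$ contributes all $3$ of its points, or $\ell\not\le H$, in which case $\ell\cap H$ is a single point — a line over $\F_2$ has exactly $3$ points and meets a hyperplane in a subspace of dimension $1$ or $2$ — so $\ell$ contributes exactly one point. Hence, if exactly $a$ of the $n$ lines of $\cS$ lie in $H$, then $|\cP(\cS)\cap H|=3a+(n-a)=n+2a$, and the weight of the corresponding codeword is $3n-(n+2a)=2(n-a)$.

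Finally I would read off the three assertions from $\mathrm{wt}=2(n-a)$ with $0\le a\le s$. All weights are even, so $C$ is $2$-divisible. Since $a\le s<n$ for every hyperplane, every nonzero weight is at least $2(n-s)>0$, and since by definition of an $(n,r,s)$ system some hyperplane contains exactly $s$ lines of $\cS$, the value $2(n-s)$ is attained; thus the minimum distance is exactly $2(n-s)$. Likewise $a\ge 0$ forces every weight to be at most $2n$, the stated bound on the maximum weight. There is no genuine obstacle here; the only points needing a little care are the elementary fact that a line not contained in a hyperplane of $\PG(r-1,2)$ meets it in exactly one point, and the bookkeeping that converts "number of lines of $\cS$ inside $H$" into a codeword weight.
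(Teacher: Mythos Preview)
Your proof is correct and follows essentially the same route as the paper: compute $|\cP(\cS)\cap H|=n+2a$ from the line-hyperplane intersection dichotomy, deduce that every nonzero weight is $2(n-a)$ with $0\le a\le s$, and read off length, dimension, $2$-divisibility, minimum distance, and maximum weight. Your argument is slightly more explicit in places (e.g.\ why spanning forces $\dim C=r$, and why the minimum is actually attained), but there is no substantive difference.
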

\begin{proof}
  Since each line consists of $2^2-1=3$ points the cardinality of $\cP(\cS)$ equals $3n$, so that $C$ has length $3n$. Since $\cS$ is spanning also $\cP(\cS)$ is 
  spanning and $C$ has dimension $r$. Given an arbitrary hyperplane $H$ in $\PG(r-1,2)$ and an arbitrary line $L$ we have that either $L$ is completely contained in $H$ 
  or intersects the hyperplane in exactly a point. Since each hyperplane $H$ contains $0\le i\le s$ out of the $n$ lines in $\cS$ we have that $H$ contains 
  $3i+(n-i)=n+2i$ points from $\cP(\cS)$, so that the codeword $c\in C$ that corresponds to $H$ has weight $(3n)-(n+2i)=2(n-i)$.  
\end{proof}
So, bounds on the parameters of a binary linear code yield upper bounds for $n_k(s)$. E.g.\ the so-called
\emph{Griesmer bound} \cite{griesmer1960bound}
\begin{equation}
  \label{eq_griesmer_bound}
  n\ge \sum_{i=0}^{k-1} \left\lceil\frac{d}{2^i}\right\rceil=:g(k,d)
\end{equation}   
relates the parameters of an $[n,k,d]_2$ code. If $n=g(k,d)$ we speak of Griesmer codes. Interestingly enough, Griesmer codes always exist if the minimum distance $d$ is sufficiently large 
and a nice geometric construction was given by Solomon and Stiffler \cite{solomon1965algebraically}.
\begin{Lemma} (Cf.~\cite[Theorem 12]{ball2024additive}, \cite[Lemma 1]{bierbrauer2021optimal}, and \cite[Lemma 3]{guan2023some})  
  \label{lemma_indirect_upper_bound}
  Let $r>2$ and $\cS$ be an $(n,r,s)$ system. Then, we have $g\!\left(r,2(n-s)\right)\le 3n$.
\end{Lemma}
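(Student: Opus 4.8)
The plan is to reduce the statement directly to the Griesmer bound \eqref{eq_griesmer_bound} by way of Lemma~\ref{lemma_binary_code}, after separating off one degenerate case.

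First I would note that any hyperplane contains at most all $n$ lines of $\cS$, hence $s\le n$. If $s=n$, then $2(n-s)=0$ and $g\!\left(r,0\right)=\sum_{i=0}^{r-1}\lceil 0\rceil=0\le 3n$, so the claimed inequality holds trivially (even when $n=0$). Thus I may assume $s<n$, i.e.\ that $\cS$ is spanning in the sense of the first definition of Section~\ref{sec_preliminaries}, which is exactly the hypothesis needed to invoke Lemma~\ref{lemma_binary_code}. In this case Lemma~\ref{lemma_binary_code} supplies a ($2$-divisible) binary linear code $C:=\cX^{-1}(\cP(\cS))$ with parameters $[3n,r,2(n-s)]_2$, and applying the Griesmer bound \eqref{eq_griesmer_bound} to $C$ — that is, with block length $3n$, dimension $k=r$, and minimum distance $d=2(n-s)$ — yields $3n\ge g\!\left(r,2(n-s)\right)$, which is precisely the assertion.

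I do not expect any real obstacle here: essentially all the work is already contained in Lemma~\ref{lemma_binary_code} and in the classical Griesmer bound. The only points worth keeping in mind are that Lemma~\ref{lemma_binary_code} genuinely presupposes a spanning system — which is why the case $s=n$, where the point multiset $\cP(\cS)$ fails to span $\PG(r-1,2)$, has to be handled separately — and that the hypothesis $r>2$ is exactly what guarantees, inside the proof of Lemma~\ref{lemma_binary_code}, that a line not contained in a given hyperplane meets it in a single point, so that the weight enumeration there, and hence the parameters $[3n,r,2(n-s)]_2$ of $C$, come out as stated.
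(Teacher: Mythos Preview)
Your argument is correct and follows exactly the paper's approach: apply Lemma~\ref{lemma_binary_code} to obtain a $[3n,r,2(n-s)]_2$ code and then invoke the Griesmer bound~\eqref{eq_griesmer_bound}. Your separate treatment of the degenerate case $s=n$ is a welcome bit of extra care, since Lemma~\ref{lemma_binary_code} indeed only applies to spanning systems. One small inaccuracy in your closing remark: the fact that a line not contained in a hyperplane meets it in exactly one point already holds for $r\ge 2$ (by the dimension formula), so the hypothesis $r>2$ is not needed for that step; it merely excludes the trivial ambient space $\PG(1,2)$ in which every line is the whole space.
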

\begin{proof}
  Combine Lemma~\ref{lemma_binary_code} with Inequality~(\ref{eq_griesmer_bound}).
\end{proof}
In other words, we have 
\begin{equation}
  \label{ie_griesmer_uppper_bound}
  n\ge \left\lceil \frac{g(k,2d)}{3}\right\rceil
  =\left\lceil \frac{\sum_{i=0}^{k-1}\left\lceil\frac{2d}{2^i}\right\rceil}{3}\right\rceil
  =\left\lceil \frac{2d+g(k-1,d)}{3}\right\rceil   
  =d+\left\lceil \frac{g(k-1,d)-d}{3}\right\rceil
  \!,
\end{equation}
where $d=n-s$ and $k=r$.
\begin{Definition}
  \label{def_griesmer_uppper_bound}
  The \emph{Griesmer upper bound} for $n_k(s)$ is the largest integer
  $n$ such that $g\!\left(2k,2(n-s)\right)\le 3n$. The \emph{weak 
  coding upper bound} for $n_k(s)$ is the largest integer $n$ such that 
  a $[3n,2k,2(n-s)]_2$ code $C$ exists. The \emph{(strong) coding 
  upper bound} for $n_k(s)$ is the largest integer $n$ such that a 
  $2$-divisible $[3n,2k,2(n-s)]_2$ code $C$ with maximum weight at most 
  $2n$ exists.  
\end{Definition}
We remark that the minimal possible length of an $[n,k,d]_2$ code is known for all $d\in\N$ when $k\le 8$ \cite{bouyukhev2000smallest}, so 
that the weak coding upper bound can be easily evaluated for $n_4(s)$. This is different for the strong coding upper bound for $n_4(s)$, see 
Subsection~\ref{sec_nonexistence}. 

\begin{table}[htp]
  \begin{center}
     \begin{tabular}{rrrr}
       \hline
       $s$ & Griesmer upper bound & weak coding upper bound & $n_4(s)$ \\
       \hline
        3 &   9 &  7 &  5 \\
        4 &  12 &    & 10 \\
        5 &  17 &    & 17 \\
        6 &  22 & 18 & 18 \\
        7 &  25 & 23 & 23 \\
        8 &  30 & 28 & 28 \\
        9 &  33 &    & 33 \\
       10 &  38 & 36 & 36 \\
       11 &  43 & 40 & 40 \\
       12 &  44 &    & 44 \\
       13 &  49 &    & 49 \\
       14 &  54 &    & 54 \\
       15 &  59 & 57 & 55 \\
       28 & 110 &    & 108 \\
       29 & 115 &    & 113 \\
       \hline
     \end{tabular}
     \caption{The Griesmer and the weak coding upper bound for $n_4(s)$.}
     \label{table_griesmer_upper_bound}
  \end{center}
\end{table}

\begin{Example}
  The Griesmer upper bound for $n_4(8)$ is $30$ and the weak coding upper bound is $28$.
  I.e., the Griesmer bound implies that no $[93,8,46]_2$ code exists but cannot rule
  out the existence of a $[90,8,44]_2$ code, so that $n_4(8)\le 30$ is the
  sharpest upper bound we can deduce from the Griesmer bound (for linear codes). However,
  since the existence of a $[84,8,40]_2$ code and the non-existence of a $[87,8,42]_2$
  code is known, we obtain $n_4(8)\le 28$. In Table~\ref{table_griesmer_upper_bound} we 
  list the Griesmer and the weak coding upper bound for $n_4(s)$ for $3\le s\le 15$ and 
  all cases were either the weak coding upper bound is strictly less than the Griesmer 
  upper bound or $n_4(s)$ is strictly less than the weak coding upper bound. For $s\in\{3,4\}$ we refer to
  \cite{blokhuis2004small}. Note that the cases $s\in\{1,2\}$ cannot occur for a spanning 
  $(n,8,s)$ system. We do not display the weak coding upper bound when it coincides 
  with the Griesmer upper bound.
\end{Example}

In order to partially evaluate the strong coding upper bound we present a few tools from coding theory. Let 
$C$ be a $[n,k,d]_2$ code with generator matrix $G$ and $c\in C$ be a codeword of weight $w$. The 
\emph{residual code of $C$ with respect to $c$}, denoted by $\operatorname{Res}(C;c)$, is the 
code generated by the restriction of $G$ to the columns where $c$ has a zero entry. If only the 
weight $w$ of $c$ is relevant we will denote it by $\operatorname{Res}(C;w)$. The following 
statement on the residual code is well-known:
\begin{Lemma}
  \label{lemma_residual_code}
  Let $C$ be an $[n,k,d]_2$ code and let $d>\tfrac{w}{2}$. Then $\operatorname{Res}(C;w)$ is an 
  $[n-w,k-1,\ge d-\left\lfloor w/2\right\rfloor]_2$ code.
\end{Lemma}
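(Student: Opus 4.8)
The plan is to mimic the standard residual-code argument from coding theory, adapted to the binary setting with the divisibility-free version of the statement. First I would fix a generator matrix $G$ of $C$ and, after a permutation of coordinates, assume that the codeword $c$ of weight $w$ has its support exactly on the first $w$ coordinates. Writing $G = \begin{pmatrix} A & B \\ c & 0\end{pmatrix}$ where the last row is $c$ (so $c$ is nonzero only in the first $w$ positions) and $A\in\F_2^{(k-1)\times w}$, $B\in\F_2^{(k-1)\times (n-w)}$, the residual code $\operatorname{Res}(C;c)$ is by definition the code generated by $B$, living in length $n-w$. The three things to check are: the length is $n-w$ (immediate), the dimension is $k-1$, and the minimum distance is at least $d - \lfloor w/2\rfloor$.

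For the dimension, I would argue that $B$ has rank $k-1$: if some nontrivial linear combination of the rows of $B$ were zero, then the corresponding combination of the rows of $A$ together with possibly adding $c$ would yield a codeword of $C$ supported entirely within the first $w$ coordinates, hence a codeword $c'$ with $\operatorname{supp}(c')\subseteq\operatorname{supp}(c)$; I then need to rule this out. Here the hypothesis $d>w/2$ does the work: such a $c'$ (if nonzero) has weight at most $w$, and one of $c'$ or $c'+c$ has weight at most $w/2 < d$, forcing it to be zero; tracing back, the original combination of rows of $G$ must have been trivial (or equal to the last row), so $B$ has full rank $k-1$.

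For the minimum distance, take any nonzero codeword $\bar v$ of $\operatorname{Res}(C;c)$, coming from a row combination whose full-length extension is a codeword $v\in C$ with $v$ restricted to the first $w$ coordinates equal to some vector $u\in\F_2^w$ and restricted to the last $n-w$ coordinates equal to $\bar v$. Then both $v$ and $v+c$ lie in $C$, and their weights are $\operatorname{wt}(u)+\operatorname{wt}(\bar v)$ and $(w-\operatorname{wt}(u))+\operatorname{wt}(\bar v)$ respectively (since $c$ is the all-ones vector on the first $w$ coordinates). At least one of $\operatorname{wt}(u)$ and $w-\operatorname{wt}(u)$ is at most $\lfloor w/2\rfloor$, so applying $\operatorname{wt}(\cdot)\ge d$ to the corresponding codeword gives $\operatorname{wt}(\bar v)\ge d-\lfloor w/2\rfloor$. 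One should also note $\bar v\neq 0$ is guaranteed because otherwise $v\in\{0,c\}$ would force the row combination to be trivial by the full-rank argument above.

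The main obstacle — really the only subtle point — is the full-rank/dimension claim, i.e., making sure no collapse happens when passing from $G$ to $B$; this is exactly where $d>w/2$ (as opposed to the cruder $d>w$) is needed, and it must be invoked carefully by considering both a candidate short codeword and its sum with $c$. Everything else is bookkeeping on supports and weights, and the divisibility structure of $C$ plays no role in this lemma. I would present the argument in two short paragraphs: one establishing $\dim = k-1$, one establishing the distance bound, both resting on the single observation that for any $u\in\F_2^w$, $\min\{\operatorname{wt}(u), w-\operatorname{wt}(u)\}\le \lfloor w/2\rfloor$.
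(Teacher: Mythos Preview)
Your argument is correct and is exactly the standard proof of the residual-code lemma. The paper itself does not supply a proof at all; it simply introduces the statement with ``The following statement on the residual code is well-known'' and moves on, so there is nothing to compare against beyond noting that your write-up is the textbook derivation the authors are implicitly invoking.

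One very minor wording issue: in the dimension argument you say the combination ``together with possibly adding $c$'' and later that the combination ``must have been trivial (or equal to the last row)''. Since the kernel vector in question involves only the first $k-1$ rows of $G$, it cannot equal $c$ by linear independence of the rows of $G$; you do realize this, but it would read more cleanly if you kept the two cases ($c'=0$ versus $c'=c$) explicitly separated and immediately discarded the second by that independence. Similarly, the closing remark about $\bar v\neq 0$ is superfluous, since you already started from a nonzero codeword of $\operatorname{Res}(C;c)$. These are presentation points only; the mathematics is sound.
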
   

\begin{Lemma}
  (\cite[Theorem 1]{ward1998divisibility}) 
  Let $C$ be an $[n,k,d]_2$ code with $n=g(k,d)$. If $2^e$ divides $d$, then $C$ is $2^e$-divisible.
  \label{lemma_griesmer_divisibility} 
\end{Lemma}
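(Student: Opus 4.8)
The plan is to strengthen the statement so that the induction can close: I would prove by induction on $k$ that every binary code $C$ with $n=g(k,d)$ is generated by its codewords of minimum weight $d$, \emph{and} that $C$ is $2^e$-divisible whenever $2^e\mid d$. For $k=1$ this is immediate, since then $C$ is the repetition code $[d,1,d]_2$, whose unique nonzero codeword has weight $d$ and spans, and whose weights $0,d$ are divisible by any $2^e\mid d$. One may assume $e\ge 1$ throughout, as $1$-divisibility is vacuous.

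For the inductive step I would fix $C=[g(k,d),k,d]_2$ and a codeword $c_0$ of weight $d$, and pass to the residual code $R:=\operatorname{Res}(C;c_0)$. The ceiling identity $\lceil d/2^i\rceil=\bigl\lceil\lceil d/2\rceil/2^{i-1}\bigr\rceil$ yields $g(k,d)=d+g\!\left(k-1,\lceil d/2\rceil\right)$, so by Lemma~\ref{lemma_residual_code} the code $R$ has length $g(k-1,\lceil d/2\rceil)$, dimension $k-1$, and minimum distance at least $\lceil d/2\rceil$; comparing its length against Inequality~(\ref{eq_griesmer_bound}) forces its minimum distance to equal $\lceil d/2\rceil$, so $R$ again meets the Griesmer bound and the induction hypothesis applies to it. The first use of it is for the spanning part of $C$: every minimum-weight codeword $\bar v$ of $R$ lifts to a minimum-weight codeword of $C$, because a preimage $v$ meeting $\operatorname{supp}(c_0)$ in $a$ positions makes $v$ and $v+c_0$ have weights $\lceil d/2\rceil+a$ and $\lceil d/2\rceil+(d-a)$, and the requirement that both be at least $d$ pins $a$ down tightly enough that one of $v,v+c_0$ has weight exactly $d$. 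Since $R$ is spanned by its minimum-weight codewords, the span $\Gamma$ of the weight-$d$ codewords of $C$ therefore maps onto all of $R$; as $c_0\in\Gamma$ already generates the kernel of $C\to R$, this gives $\Gamma=C$.

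For the divisibility part I would use the identity $\operatorname{wt}(x)+\operatorname{wt}(x+c_0)=d+2\operatorname{wt}(\bar x)$, valid for every $x\in C$, where $\bar x$ is the image of $x$ in $R$. As $2^{e-1}\mid d/2=\lceil d/2\rceil$, the induction hypothesis makes $R$, hence $\operatorname{wt}(\bar x)$, divisible by $2^{e-1}$, so together with $2^e\mid d$ one gets $\operatorname{wt}(x)\equiv-\operatorname{wt}(x+c_0)\pmod{2^e}$ for every $x\in C$ and every weight-$d$ codeword $c_0$; writing a given $c\in C$ as a sum $c_1+\dots+c_t$ of weight-$d$ codewords (legitimate by the spanning part just proved for $C$) and starting from $\operatorname{wt}(c_1)=d\equiv 0$, repeated application of the congruence gives $\operatorname{wt}(c)\equiv 0\pmod{2^e}$. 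I expect the transfer from $R$ back to $C$ to be the real obstacle: the congruence for a single $c_0$ is far too weak on its own, and what makes the argument work is the realisation that one must carry the auxiliary ``generated by minimum-weight codewords'' statement along the same induction, which in turn rests on the elementary but easily missed lifting property of minimum-weight codewords of the residual.
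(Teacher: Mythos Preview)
Your argument is correct. The paper itself does not give a proof of this lemma; it simply quotes the result from Ward~\cite{ward1998divisibility}, so there is no in-paper proof to compare against.

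Your route---induction on $k$ via the residual code, carrying along the auxiliary claim that a binary Griesmer code is generated by its minimum-weight words---is a clean self-contained proof. The key steps all check out: since $e\ge 1$ the minimum distance $d$ is even, so for a preimage $v$ of a minimum-weight word $\bar v\in R$ the two constraints $a\ge d/2$ and $d-a\ge d/2$ force $a=d/2$ exactly, making both $v$ and $v+c_0$ weight-$d$ lifts; the identity $\operatorname{wt}(x)+\operatorname{wt}(x+c_0)=d+2\operatorname{wt}(\bar x)$ together with the $2^{e-1}$-divisibility of $R$ yields $\operatorname{wt}(x)\equiv -\operatorname{wt}(x+c_0)\pmod{2^e}$ for every weight-$d$ word $c_0$; and the spanning property lets you telescope this congruence from $\operatorname{wt}(0)=0$ through any sum of weight-$d$ words. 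Ward's original proof proceeds rather differently, via a $p$-adic valuation argument on generalized weight polynomials, and also yields the $q$-ary statement; your elementary residual-code induction is specific to $q=2$ but is considerably shorter for that case.
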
 

\begin{Proposition} (\cite{macwilliams1977theory}, MacWilliams Identities)
  \label{prop_MacWilliams_identitites}
   Let $C$ be an $[n,k,d]_2$ code and $C^\perp$ be the dual code of $C$. Let $A_i(C)$ and $B_i(C)$ be the number of codewords of weight $i$ in $C$ and
   $C^\perp$, respectively. With this, we have
   \begin{equation}
     \label{eq_MacWilliams}
     \sum_{j=0}^n K_i(j)A_j(C)=2^kB_i(C),\quad 0\le i\le n
   \end{equation}
   where
   $$
     K_i(j)=\sum_{s=0}^n (-1)^s {{n-j}\choose{i-s}}{{j}\choose{s}},\quad 0\le i\le n
   $$
   are the binary Krawtchouk polynomials. We will simplify the notation to $A_i$ and $B_i$ whenever $C$ is clear from the context.
 \end{Proposition}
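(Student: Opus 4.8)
The plan is to first prove the polynomial (Hamming weight enumerator) form of the MacWilliams relation by an additive-character computation over $\F_2^n$, and then read off the coefficientwise identity \eqref{eq_MacWilliams}. For a code $D\subseteq\F_2^n$ write $W_D(x,y):=\sum_{u\in D}x^{n-\operatorname{wt}(u)}y^{\operatorname{wt}(u)}=\sum_{j=0}^{n}A_j(D)\,x^{n-j}y^{j}$; the intermediate goal is the transform identity
\[
  W_C(x+y,\,x-y)=|C|\cdot W_{C^\perp}(x,y).
\]

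The key device is the Hadamard transform relative to the characters $u\mapsto(-1)^{u\cdot v}$ of $\F_2^n$. I would set $F(v):=\sum_{u\in\F_2^n}(-1)^{u\cdot v}\,x^{n-\operatorname{wt}(u)}y^{\operatorname{wt}(u)}$ and observe that the summand is multiplicative across the $n$ coordinates, so $F$ factors into $n$ one-coordinate sums, each equal to $x+(-1)^{v_i}y$; hence $F(v)=(x+y)^{n-\operatorname{wt}(v)}(x-y)^{\operatorname{wt}(v)}$. Summing over $u\in C$ and interchanging the two finite sums gives
\[
  \sum_{u\in C}F(u)=\sum_{v\in\F_2^n}x^{n-\operatorname{wt}(v)}y^{\operatorname{wt}(v)}\sum_{u\in C}(-1)^{u\cdot v}.
\]
By orthogonality of characters restricted to the subgroup $C$, the inner sum equals $|C|$ if $v\in C^\perp$ and $0$ otherwise, so the right-hand side is $|C|\,W_{C^\perp}(x,y)$; by the factorization of $F$ the left-hand side is $W_C(x+y,x-y)$, which proves the displayed transform.

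To extract \eqref{eq_MacWilliams}, expand $(x+y)^{n-j}(x-y)^{j}=\sum_{i=0}^{n}K_i(j)\,x^{n-i}y^{i}$, where a direct binomial expansion shows that the coefficient of $x^{n-i}y^{i}$ is precisely $\sum_{s=0}^{n}(-1)^{s}\binom{n-j}{i-s}\binom{j}{s}=K_i(j)$. Then $W_C(x+y,x-y)=\sum_{i=0}^{n}\bigl(\sum_{j=0}^{n}K_i(j)A_j(C)\bigr)x^{n-i}y^{i}$, while $|C|\,W_{C^\perp}(x,y)=2^{k}\sum_{i=0}^{n}B_i(C)\,x^{n-i}y^{i}$ since $|C|=2^{k}$. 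Comparing the coefficient of $x^{n-i}y^{i}$ for each $i$ yields $\sum_{j=0}^{n}K_i(j)A_j(C)=2^{k}B_i(C)$.

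Since this is a classical result, I do not expect any genuine obstacle; the only points needing care are keeping track of which index set each sum ranges over ($C$ versus $C^\perp$), justifying the interchange of the finite double sum, and checking that the one-coordinate character sum contributes exactly the factor $x\pm y$. Equivalently, one could run the argument over an arbitrary finite abelian group equipped with a nondegenerate pairing, but for $\F_2^{n}$ the direct computation above is the shortest.
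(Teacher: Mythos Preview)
Your argument is correct and is precisely the standard character-theoretic proof of the MacWilliams identity; the paper does not give its own proof but simply cites \cite{macwilliams1977theory}, so there is nothing to compare against. One minor notational point: you use $u$ both as the dummy variable in the definition of $F(v)$ and then again as the outer summation variable when you write $\sum_{u\in C}F(u)$, which forces an implicit renaming of the inner variable to $v$; it would read more cleanly to define $F(v)=\sum_{w\in\F_2^n}(-1)^{w\cdot v}x^{n-\operatorname{wt}(w)}y^{\operatorname{wt}(w)}$ from the start.
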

 Whenever we speak of the first $l$ MacWilliams identities, we mean Equation~(\ref{eq_MacWilliams}) for $0\le i\le l-1$.
 Adding the non-negativity constraints $A_i,B_i\ge 0$ we obtain a linear program where we can maximize or minimize certain quantities, which is called the
 linear programming method for linear codes. Adding additional equations or inequalities strengthens the formulation.
 For an $[n,k,d]_2$ code of full length, i.e.\ $B_1=0$, we can rewrite the first four MacWilliams identities to
 \begin{eqnarray}
   \sum_{i>0} A_i   &=& 2^k-1,\label{eq_mw1}\\
   \sum_{i} iA_i    &=& 2^{k-1}n, \label{eq_mw2}\\
   \sum_{i} i^2 A_i &=& 2^{k-1}\cdot\left(B_2+n(n+1)/2\right),\label{eq_mw3}\\
   \sum_{i} i^3 A_i &=& 2^{k-2}\cdot\left(3(B_2n-B_3)+n^2(n+3)/2\right).\label{eq_mw4}
 \end{eqnarray}
 The weight enumerator can be generalized to the split weight enumerator based on a partition of the coordinates \cite{simonis1995macwilliams}. For a linear
 programming method based on the split enumerator we refer e.g.\ to \cite{jaffe1997brief}.

 \begin{Proposition}(\cite[Proposition 5]{dodunekov1999some})
   \label{prop_div_one_more}
   Let $C$ be an $[n,k,d]_2$ code with all weights divisible by $\Delta:=2^a$ and let $\left(A_i\right)_{i=0,1,\dots,n}$ be the weight distribution of $C$. Put
   \begin{eqnarray*}
     \alpha&:=&\min\{k-a-1,a+1\},\\
     \beta&:=&\lfloor(k-a+1)/2\rfloor,\text{ and}\\
     \delta&:=&\min\{2\Delta i\,\mid\,A_{2\Delta i }\neq 0\wedge i>0\}.
   \end{eqnarray*}
   Then the integer
   $$
     T:=\sum_{i=0}^{\lfloor n/(2\Delta)\rfloor} A_{2\Delta i}
   $$
   satisfies the following conditions.
   \begin{enumerate}
     \item \label{div_one_more_case1}
           $T$ is divisible by $2^{\lfloor(k-1)/(a+1)\rfloor}$.
     \item \label{div_one_more_case2}
           If $T<2^{k-a}$, then
           $$
             T=2^{k-a}-2^{k-a-t}
           $$
           for some integer $t$ satisfying $1\le t\le \max\{\alpha,\beta\}$. Moreover, if $t>\beta$, then $C$ has an $[n,k-a-2,\delta]_2$ subcode and if $t\le \beta$, it has an
           $[n,k-a-t,\delta]_2$ subcode.
     \item \label{div_one_more_case3}
           If $T>2^k-2^{k-a}$, then
           $$
             T=2^k-2^{k-a}+2^{k-a-t}
           $$
           for some integer $t$ satisfying $0\le t\le \max\{\alpha,\beta\}$. Moreover, if $a=1$, then $C$ has an $[n,k-t,\delta]_2$ subcode. If $a>1$, then $C$ has an
           $[n,k-1,\delta]_2$ subcode unless $t=a+1\le k-a-1$, in which case it has an $[n,k-2,\delta]_2$ subcode.
   \end{enumerate}
 \end{Proposition}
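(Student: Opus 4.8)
The plan is to reduce the entire statement to facts about a single Reed--Muller code. Fix a generator matrix of $C$ with rows $g_1,\dots,g_k$, so that every codeword is $c_\varepsilon=\sum_{i=1}^k\varepsilon_i g_i$ for a unique $\varepsilon=(\varepsilon_1,\dots,\varepsilon_k)\in\F_2^k$, and for $\emptyset\neq T\subseteq\{1,\dots,k\}$ let $w_T$ be the number of coordinates in which all $g_i$ with $i\in T$ equal $1$. The inclusion--exclusion formula for the size of an iterated symmetric difference of supports gives
\[
  \operatorname{wt}(c_\varepsilon)=\sum_{\emptyset\neq T\subseteq\{1,\dots,k\}}(-2)^{|T|-1}\,w_T\prod_{i\in T}\varepsilon_i .
\]
First I would show, by induction on $|T|$, that $2^{a-|T|+1}$ divides $w_T$ whenever $1\le|T|\le a+1$: applying the identity to $\sum_{i\in T}g_i$ and isolating the $T$-term writes $(-2)^{|T|-1}w_T$ as $\operatorname{wt}\!\big(\sum_{i\in T}g_i\big)$ minus a sum of terms $(-2)^{|T'|-1}w_{T'}$ over proper nonempty $T'\subsetneq T$, and each of these, as well as $\operatorname{wt}\!\big(\sum_{i\in T}g_i\big)$, is divisible by $\Delta=2^a$ by the induction hypothesis and the $\Delta$-divisibility of $C$. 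Reducing the displayed identity modulo $2^{a+1}$ and dividing by $2^a$, every term with $|T|\ge a+2$ vanishes, while each term with $|T|\le a+1$ contributes $2^a\mu_T\prod_{i\in T}\varepsilon_i$ where $\mu_T:=(w_T/2^{a-|T|+1})\bmod 2$. Hence the map
\[
  \rho\colon\F_2^k\to\F_2,\qquad \rho(\varepsilon):=\bigl(\operatorname{wt}(c_\varepsilon)/2^a\bigr)\bmod 2 ,
\]
coincides with the Boolean polynomial $\sum_{1\le|T|\le a+1}\mu_T\prod_{i\in T}\varepsilon_i$; that is, $\rho$ is (the evaluation vector of) a codeword of the Reed--Muller code $\operatorname{RM}(a+1,k)$ of block length $2^k$, and by construction $\operatorname{wt}(\rho)=2^k-T$.

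Part~(\ref{div_one_more_case1}) is then immediate from McEliece's theorem that $\operatorname{RM}(a+1,k)$ is $2^{\lfloor(k-1)/(a+1)\rfloor}$-divisible: both $\operatorname{wt}(\rho)$ and $2^k$ are divisible by $2^{\lfloor(k-1)/(a+1)\rfloor}$, hence so is $T=2^k-\operatorname{wt}(\rho)$. For parts~(\ref{div_one_more_case2}) and~(\ref{div_one_more_case3}) recall that the minimum distance of $\operatorname{RM}(a+1,k)$ is $2^{k-a-1}$, so $2^{k-a}$ is exactly twice it. In case~(\ref{div_one_more_case3}) we have $\operatorname{wt}(\rho)=2^k-T<2^{k-a}$, so $\rho$ itself is a codeword of weight below twice the minimum distance; in case~(\ref{div_one_more_case2}) we have $\operatorname{wt}(\rho)>2^k-2^{k-a}$, so the complementary codeword $\mathbf{1}+\rho\in\operatorname{RM}(a+1,k)$ has weight $T<2^{k-a}$ (here $\mathbf{1}\in\operatorname{RM}(0,k)\subseteq\operatorname{RM}(a+1,k)$).

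In either case I would invoke the Kasami--Tokura classification of the codewords of $\operatorname{RM}(a+1,k)$ of weight strictly below $2^{k-a}$: a nonzero such codeword has weight $2^{k-a}-2^{k-a-t}$ for an integer $1\le t\le\max\{\alpha,\beta\}$, with $\alpha$ and $\beta$ as in the statement, and, up to a change of basis of $C$, it is one of two normal forms, each a product of some of the coordinates $\varepsilon_i$ with a low-weight form in the remaining variables. Substituting $\operatorname{wt}(\rho)=2^k-T$ yields exactly the two stated closed expressions for $T$; and since $\rho(0)=0$ places the origin in $\{\varepsilon:\rho(\varepsilon)=0\}$, the relevant normal form exhibits a coordinate subspace of $\F_2^k$, of the asserted dimension, on which $\rho$ vanishes identically, and the corresponding linear subcode of $C$ then has all nonzero weights divisible by $2\Delta$, hence at least $\delta$. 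The case $a=1$ is the transparent one and needs no Kasami--Tokura: there $\rho$ is a quadratic form over $\F_2$ with $\rho(0)=0$, the classification of binary quadratic forms (hyperbolic, elliptic, and the degenerate ``linear-part'' type) determines $\operatorname{wt}(\rho)$, and the radical of the polar form together with a maximal totally singular subspace produces a subcode of precisely the dimension $k-1-t$ (case~(\ref{div_one_more_case2})) or $k-t$ (case~(\ref{div_one_more_case3})) claimed for $a=1$.

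The conceptual step is the reduction to $\operatorname{RM}(a+1,k)$ in the first paragraph; after that, part~(\ref{div_one_more_case1}) and the weight formulas in parts~(\ref{div_one_more_case2})--(\ref{div_one_more_case3}) are direct quotations of McEliece and Kasami--Tokura. I expect the main obstacle to be the subcode part for $a>1$: one must match the Kasami--Tokura families, together with the constraint that the codeword in question passes through (respectively avoids) the origin, against the exact ranges $1\le t\le\max\{\alpha,\beta\}$ and $0\le t$, and read off in each family a coordinate subspace of the precise dimension, thereby reproducing the dichotomies $t\gtrless\beta$ in~(\ref{div_one_more_case2}) and $a=1$ versus $a>1$ in~(\ref{div_one_more_case3}). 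This is a finite but genuinely delicate bookkeeping task, and I do not see how to avoid going through the Kasami--Tokura (and, for the sharper ranges, Kasami--Tokura--Azumi) normal forms.
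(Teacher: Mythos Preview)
The paper does not prove this proposition at all: it is quoted verbatim from \cite[Proposition~5]{dodunekov1999some} and used as a black box in the proof of Lemma~\ref{lemma_171_8_84}. So there is no ``paper's own proof'' to compare against.

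That said, your plan is exactly the argument that underlies the cited result. The reduction of $\varepsilon\mapsto(\operatorname{wt}(c_\varepsilon)/2^a)\bmod 2$ to a codeword of $\operatorname{RM}(a+1,k)$ via the Ax--McEliece style divisibility of the $w_T$'s is the standard move, and after that McEliece's divisibility theorem gives part~(\ref{div_one_more_case1}) while Kasami--Tokura gives the weight shapes in parts~(\ref{div_one_more_case2}) and~(\ref{div_one_more_case3}). The subcode assertions then come from reading off, in each Kasami--Tokura normal form, a coordinate subspace contained in the zero set of the Boolean polynomial (using $\rho(0)=0$), exactly as you outline. This is precisely how Dodunekov--Guritman--Simonis proceed.

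What you have written is a correct proof \emph{plan}, not a proof: you explicitly stop short of the case analysis that matches the Kasami--Tokura families to the dimensions $k-a-2$, $k-a-t$, $k-1$, $k-2$, $k-t$ and to the dichotomies $t\gtrless\beta$ and $a=1$ versus $a>1$. That bookkeeping is genuinely where all the content of the subcode clauses lives, and without it the proposal establishes only part~(\ref{div_one_more_case1}) and the displayed formulas for $T$. If your goal is a self-contained proof, you would need to actually list the two Kasami--Tokura normal forms (the ``product of affine forms'' type and the ``quadratic times a product of linears'' type), identify for each the largest coordinate subspace in the zero set, and check the ranges of $t$; this is routine but cannot be waved away.
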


For the constructive lower bound we have:
\begin{Lemma}
  \label{lemma_union}
  For $k>1$ we have $n_k(s_1+s_2)\ge n_k(s_1)+n_k(s_2)$ and $n_k(s+1)\ge n_k(s)+1$.
\end{Lemma}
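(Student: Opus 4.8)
Both assertions are lower bounds, so the plan is entirely constructive: for each claim I would exhibit a spanning line system of the required size inside $\PG(2k-1,2)$. The one structural fact I would use from $k>1$ is that then $\PG(2k-1,2)$ has dimension $2k-1\ge 3$, so every hyperplane has projective dimension $2k-2\ge 2$ and therefore contains at least one line.

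For $n_k(s+1)\ge n_k(s)+1$ I would take an optimal spanning $(n,2k,s)$ system $\cS$ with $n=n_k(s)$, fix a hyperplane $H$ that contains exactly $s$ lines of $\cS$, pick any line $L\le H$ (this is where $k>1$ enters), and set $\cS':=\cS\uplus\{L\}$. An arbitrary hyperplane $H'$ contains in $\cS'$ exactly the lines it contained in $\cS$ together with the extra copy of $L$ if $L\le H'$; hence it gains at most one line, so the maximum over hyperplanes is at most $s+1$, while $H$ now contains exactly $s+1$ lines, making the maximum exactly $s+1$. Finally $\cS'$ has $n+1$ lines and is spanning because $s+1\le n<n+1$, giving $n_k(s+1)\ge n+1$. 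I expect no real obstacle here beyond this bookkeeping.

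For $n_k(s_1+s_2)\ge n_k(s_1)+n_k(s_2)$ I would start from optimal spanning systems $\cS_1$ and $\cS_2$ realizing $n_k(s_1)$ and $n_k(s_2)$, both regarded as multisets of lines in the same $\PG(2k-1,2)$, and take their multiset sum $\cS:=\cS_1\uplus\cS_2$. It is immediate that $\cS$ has $n_k(s_1)+n_k(s_2)$ lines, that every hyperplane meets it in at most $s_1+s_2$ lines, and that $\cS$ is spanning since $s_1+s_2<n_k(s_1)+n_k(s_2)$ (as $s_i<n_k(s_i)$). The one genuine point to verify — and the step I expect to be the main, if minor, obstacle — is that some hyperplane meets $\cS$ in exactly $s_1+s_2$ lines; a careless union only yields $\le$. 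I would arrange this before forming the sum: choose hyperplanes $H_1,H_2$ that are extremal for $\cS_1,\cS_2$ respectively, use that $\operatorname{GL}(2k,2)$ acts transitively on hyperplanes of $\PG(2k-1,2)$ to pick a linear bijection sending $H_2$ to $H_1$, and replace $\cS_2$ by its image (which is combinatorially unchanged, being a $\operatorname{GL}$-image). Then $H_1$ is extremal for both summands and meets $\cS$ in exactly $s_1+s_2$ lines, so $\cS$ is a spanning $(n_k(s_1)+n_k(s_2),2k,s_1+s_2)$ system, as desired. An alternative that sidesteps the transitivity argument is to establish the second inequality first, deduce that $n_k$ is non-decreasing, and then pass from the actual maximum $s'\le s_1+s_2$ of the naive union back up to $s_1+s_2$.
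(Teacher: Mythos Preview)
Your proof is correct and follows the same constructions as the paper (multiset union for the first inequality, adding a single line for the second). You are in fact more careful than the paper, which simply records that the resulting systems have maximum hyperplane count $\le s_1+s_2$ (respectively $\le s+1$) and stops there, whereas you explicitly force equality by aligning extremal hyperplanes via $\operatorname{GL}$-transitivity (or, alternatively, by invoking the monotonicity coming from the second inequality).
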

\begin{proof}
  Let $\cS_i$ be spanning $(n_i,2k,s_i)$ systems for $i=1,2$ and $\cS$ a spanning  $(n,2k,s)$ system. With this, the multiset union of $\cS_1$ and $\cS_2$ is a spanning 
  $(n_1+n_2,2k,\le s_1+s_2)$ system. Adding an arbitrary line to $\cS$ gives a spanning $(n+1,2k,\le s+1)$ system.  
\end{proof}

\begin{Definition}
  A \emph{vector space partition} of $\PG(r-1,2)$ is a multiset $\cV$ of subspaces with dimension at most $(r-1)$ such that every point of $\PG(r-1,2)$ is contained in exactly
  one element of $\cV$.  We say that $\cV$ has type $1^{t_1} 2^{t_2}\dots (r-1)^{t_{r-1}}$ if exactly $t_i$ elements of $\cV$ have dimension $i$ for all $1\le i\le r-1$.
\end{Definition}

A set of matrices $M\subseteq \F_2^{m\times n}$ with $\operatorname{rk}(A-B)\ge \delta$
for all $A,B\in M$ with $A\neq B$ is called a \emph{rank metric code} with minimum rank
distance $\delta$. A Singleton-type upper bound gives $|M|\le 2^{\max\{m,n\}\cdot(\min\{m,n\}-\delta+1)}$. 
Rank metric codes attaining this bound are called \emph{MRD} codes.
They exist for all parameters with $\delta\le\min\{m,n\}$, even if one additionally
requires that $M$ is linearly closed, see e.g.\ \cite{sheekey201913} for a survey.

\begin{Lemma}
  \label{lemma_lifting}
  For $r>4$ there exists a vector space partition $\cV$ of $\PG(r-1,2)$ of type
  $2^{t_2} (r-2)^1$ where $t_2=2^{r-2}$.
\end{Lemma}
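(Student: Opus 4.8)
The plan is to produce the partition explicitly via the lifting construction for rank-metric codes. Write $\F_2^r=\F_2^2\oplus\F_2^{r-2}$ and fix the $(r-2)$-space $U=\{0\}\oplus\F_2^{r-2}$. For a matrix $A\in\F_2^{(r-2)\times 2}$ put $L_A:=\{(x,Ax)\,:\,x\in\F_2^2\}$, a $2$-space. Two observations drive everything: $L_A\cap U=\{0\}$ (a vector of $L_A$ lying in $U$ has first component $0$, hence is $0$), and for $A\ne B$ one has $L_A\cap L_B=\{0\}$ if and only if $A-B$ has rank $2$. So the idea is to take $M\subseteq\F_2^{(r-2)\times 2}$ to be an MRD code with minimum rank distance $\delta=2$. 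Since $r>4$ gives $r-2\ge 3$, so that $\delta=2\le\min\{r-2,2\}$, such a code exists by the result quoted above and attains $|M|=2^{\max\{r-2,2\}\cdot(\min\{r-2,2\}-\delta+1)}=2^{r-2}$.

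Next I would check that $\cV:=\{U\}\cup\{L_A\,:\,A\in M\}$ is a vector space partition of $\PG(r-1,2)$. Pairwise disjointness of the members (away from $0$) is exactly the content of the two observations, since every nonzero difference $A-B$ with $A,B\in M$ has rank $2$. For the covering property, take a point $\langle(x,y)\rangle$ of $\PG(r-1,2)$. If $x=0$ it lies in $U$ and in no $L_A$; if $x\ne 0$ it lies in $L_A$ precisely when $Ax=y$, and the evaluation map $A\mapsto Ax$ is injective on $M$ (if $Ax=Bx$ then $A-B$ does not have rank $2$, so $A=B$), hence a bijection $M\to\F_2^{r-2}$ by the size count; thus there is a unique such $A$. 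So every point lies on exactly one member of $\cV$.

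It then remains to read off the type: $\cV$ consists of the single $(r-2)$-space $U$ together with the $|M|=2^{r-2}$ lines $L_A$ (all distinct, and distinct from $U$ since $r-2\ge 3>2$), giving type $2^{2^{r-2}}(r-2)^1$; as a sanity check, $3\cdot 2^{r-2}+(2^{r-2}-1)=2^r-1$ accounts for all points. I do not expect a genuine obstacle here once the existence of a $2^{r-2}$-element MRD code in $\F_2^{(r-2)\times 2}$ is granted — the remainder is the standard graph-of-a-linear-map bookkeeping, so the only real input is the cited MRD existence statement. The hypothesis $r>4$ enters only to guarantee $r-2\ge 2$ so that rank distance $2$ is attainable; for $r=4$ the assertion would degenerate to the well-known existence of a line spread of $\PG(3,2)$.
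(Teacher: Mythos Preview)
Your proof is correct and follows essentially the same lifting construction as the paper: take an MRD code of minimum rank distance $2$ in $\F_2^{2\times(r-2)}$ (you use the transposed shape $\F_2^{(r-2)\times 2}$, which is immaterial) and lift each matrix to the graph subspace complementary to a fixed $(r-2)$-space. The paper states the disjointness and leaves the covering count implicit, while you spell out both the injectivity argument and the point count, but the underlying idea is identical.
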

\begin{proof}
  Let $M\subseteq \F_2^{2\times (r-2)}$ be an MRD code with minimum rank distance $2$ and cardinality $2^{r-2}$. Prepending a $2\times 2$ unit matrix to the elements of $M$
  gives generator matrices of $2$-spaces in $\PG(r-1,2)$ that are pairwise disjoint and disjoint to an $(r-2)$-space $S$.
\end{proof}

\begin{Lemma}
  \label{lemma_vsp}
  For $r>a> 2$ with $r\equiv a\pmod 2$ there exists a vector space partition $\cV$ of $\PG(r-1,2)$ of type $2^{t_2} a^1$ where $t_2=2^{a}\cdot\tfrac{2^{r-a}-1}{3}$.
\end{Lemma}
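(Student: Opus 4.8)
The plan is to reduce the statement to the special case $r=a+2$, which is precisely Lemma~\ref{lemma_lifting}, by slicing $\PG(r-1,2)$ along a line spread of a quotient space; the parity hypothesis $r\equiv a\pmod 2$ enters only through the existence of that spread. Concretely, I would first fix an $a$-space $S\le\F_2^r$ and consider the quotient map $\varphi\colon\F_2^r\to Q:=\F_2^r/S$, where $\dim Q=r-a$ is even and positive. Identifying $Q$ with $\F_{2^{r-a}}=\F_{4^{(r-a)/2}}$ and using its structure as an $\F_4$-vector space, the one-dimensional $\F_4$-subspaces of $Q$ form a vector space partition $\Sigma$ of $Q$ of type $2^{N}$ with $N=(2^{r-a}-1)/3$, i.e.\ a line spread of $\PG(r-a-1,2)$ (for $r-a=2$ this is the trivial spread whose single element is $\PG(1,2)$ itself).

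Next, for each $\bar\ell\in\Sigma$ I would set $\hat\ell:=\varphi^{-1}(\bar\ell)$, an $(a+2)$-space containing $S$. Since distinct elements of $\Sigma$ intersect only in $0$, distinct preimages intersect in exactly $S$, and since $\Sigma$ covers $\PG(Q)$ the subspaces $\hat\ell$ cover $\PG(r-1,2)$; hence the point set $\PG(r-1,2)\setminus S$ is the disjoint union of the sets $\hat\ell\setminus S$. Now apply Lemma~\ref{lemma_lifting} inside each $\hat\ell$ — legitimate because its ambient dimension is $a+2>4$, as $a>2$ — to obtain a vector space partition of $\hat\ell$, viewed as $\PG(a+1,2)$, of type $2^{2^a}a^1$. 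Since $\operatorname{GL}(a+2,2)$ acts transitively on $a$-subspaces, I may assume its unique $a$-space is $S$. Writing $\mathcal L_{\bar\ell}$ for the resulting $2^a$ lines, every line in $\bigcup_{\bar\ell}\mathcal L_{\bar\ell}$ avoids $S$, and two such lines from different slices $\hat\ell_1\neq\hat\ell_2$ are disjoint, because a common point would lie in $\hat\ell_1\cap\hat\ell_2=S$. Therefore $\cV:=\{S\}\cup\bigcup_{\bar\ell\in\Sigma}\mathcal L_{\bar\ell}$ is a vector space partition of $\PG(r-1,2)$ with one $a$-space and $N\cdot 2^a=2^a\cdot\tfrac{2^{r-a}-1}{3}=t_2$ lines, as claimed.

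The only point that needs a little care is the compatibility of the per-slice partitions: one must verify that the lines produced in the various $\hat\ell$, together with $S$, partition all points of $\PG(r-1,2)$, which is exactly what the relation $\hat\ell_1\cap\hat\ell_2=S$ for $\hat\ell_1\neq\hat\ell_2$ (and the fact that every point outside $S$ maps under $\varphi$ into a unique spread element) guarantees. Apart from recalling the classical existence of a line spread of $\PG(r-a-1,2)$ when $r-a$ is even — the step where $r\equiv a\pmod 2$ is used — the remainder of the argument is bookkeeping with dimensions and point counts.
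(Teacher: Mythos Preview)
Your argument is correct, but it is organized differently from the paper's. The paper proceeds by induction on $r$: it applies Lemma~\ref{lemma_lifting} once at the top level to split $\PG(r-1,2)$ into $2^{r-2}$ lines and a single $(r-2)$-space $S$, and then recurses inside $S$ until the remaining subspace has dimension $a$. You instead fix the target $a$-space $S$ from the outset, take a line spread of the even-dimensional quotient $\F_2^r/S$, pull each spread line back to an $(a+2)$-space through $S$, and apply Lemma~\ref{lemma_lifting} in parallel inside each of those slices. Both proofs ultimately rest on the same lifting lemma; the paper iterates it ``from the top down'' along a chain of subspaces, whereas you apply it once per slice ``at the bottom.'' Your route avoids the induction and makes the role of the parity condition $r\equiv a\pmod 2$ explicit (it is exactly what guarantees the quotient admits a line spread), at the modest cost of invoking that spread existence directly---a fact the paper also uses elsewhere.
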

\begin{proof}
  We prove by induction over $r$. Let $\cV$ be the vector space partition
  obtained from Lemma~\ref{lemma_lifting} and let $S\in\cV$ be the unique
  $(r-2)$-dimensional element. If $a=r-2$, which is indeed the case for all
  $r<6$, then $\cV$ is the desired vector space partition. Otherwise we
  identify $S$ with $\PG(r-3,2)$ and replace $S$ by a vector space partition
  of $\PG(r-3,2)$ of type $2^{t_2'} a^1$, which exists by induction.
\end{proof}

\begin{Lemma}
  \label{lemma_construction_x_preparation}
  For $r>a>2$ with $r\equiv a\pmod 2$ let $\cS$ be the set of $2$-dimensional
  elements of a vector space partition $\cV$ of $\PG(r-1,2)$ of type
  $2^{t_2}a^1$ and $A$ be the unique $a$-dimensional element in $\cV$. Then,
  $\cS$ is a $(t_2,r,s)$ system where
  $t_2=2^{a}\cdot\tfrac{2^{r-a}-1}{3}$ and
  $s=2^{a-2}\cdot \tfrac{2^{r-a}-1}{3}$. Moreover, each hyperplane
  that contains $A$ contains $s-2^{a-2}$ elements from $\cS$.
\end{Lemma}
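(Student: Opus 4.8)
The plan is to compute, for an arbitrary hyperplane $H$ of $\PG(r-1,2)$, the exact number of elements of $\cS$ contained in $H$ by double counting the points of $H$. Since $\cV$ has type $2^{t_2}a^1$, every point of $H$ lies in exactly one of the $t_2$ lines of $\cS$ or in $A$. A line $L\in\cS$ meets $H$ either in all three of its points (when $L\le H$) or in exactly one point (otherwise, since then $\dim(L\cap H)=2+(r-1)-r=1$); likewise $A$ meets $H$ in all $2^a-1$ of its points when $A\le H$, and in an $(a-1)$-space (hence $2^{a-1}-1$ points) otherwise. Writing $x$ for the number of elements of $\cS$ lying in $H$, the count of the $2^{r-1}-1$ points of $H$ reads
\[
  2^{r-1}-1 \;=\; 3x + (t_2-x) + |A\cap H|,
\]
so $2x = 2^{r-1}-1-t_2-|A\cap H|$.

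Before substituting I would record two arithmetic facts: counting all $2^r-1$ points of $\PG(r-1,2)$ through $\cV$ gives $3t_2=2^r-2^a$ (equivalently $t_2=2^a\cdot\tfrac{2^{r-a}-1}{3}$), and $3\mid 2^{r-a}-1$ because $r\equiv a\pmod 2$ makes $r-a$ even; together with $r\ge a+2$ (again from $r>a$ and $r\equiv a\pmod 2$) these ensure that $s=2^{a-2}\cdot\tfrac{2^{r-a}-1}{3}$ and $s-2^{a-2}=2^{a}\cdot\tfrac{2^{r-a-2}-1}{3}$ are non-negative integers. Substituting $t_2=\tfrac{2^r-2^a}{3}$ into the displayed identity: if $A\not\le H$ then $|A\cap H|=2^{a-1}-1$, and a short simplification gives $x=2^{a-2}\cdot\tfrac{2^{r-a}-1}{3}=s$; if $A\le H$ then $|A\cap H|=2^a-1$, and the same computation gives $x=s-2^{a-2}$.

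It then remains to assemble the conclusion. The $t_2$ elements of $\cS$ are pairwise disjoint as blocks of a vector space partition, so $\cS$ is a (multi)set of $t_2$ lines; every hyperplane contains either $s$ or $s-2^{a-2}\le s$ of them; and a hyperplane attaining $s$ exists because $A$, having dimension $a\le r-2<r-1$, fails to be contained in some hyperplane (indeed $2^r-2^{r-a}>0$ hyperplanes do not contain $A$). Hence $\cS$ is a $(t_2,r,s)$ system, and the \emph{moreover} clause is exactly the case $A\le H$ computed above. I expect no genuine obstacle here: the only points needing attention are carrying out the two substitutions correctly and checking the divisibility claims that make $s$ and $s-2^{a-2}$ integers, in particular the boundary case $r=a+2$, where $s-2^{a-2}=0$.
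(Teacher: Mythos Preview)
Your proof is correct and follows essentially the same point-counting argument as the paper: partition the points of a hyperplane $H$ according to the block of $\cV$ containing them, and solve for the number of lines of $\cS$ fully contained in $H$ in the two cases $A\le H$ and $A\not\le H$. You are slightly more explicit than the paper in verifying that the value $s$ is actually attained (by exhibiting a hyperplane not containing $A$) and in checking the integrality of $s$ and $s-2^{a-2}$, but the approach is the same.
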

\begin{proof}
  Let $H$ be an arbitrary hyperplane of $\PG(r-1,2)$. Note that every
  $i$-space intersects $H$ in either $2^i-1$ or $2^{i-1}-1$ points and that
  the elements of $\cS$ partition the points outside of $A$. Counting
  points yields that $H$ contains
  $$
    \frac{2^{r-1}-2^{a-1}-t_2}{2}=
    2^{a-2}\cdot \frac{2^{r-a}-1}{3}=s
  $$
  elements from $\cS$ if $A\subsetneq H$ and
  $$
    \frac{2^{r-1}-2^a-t_2}{2}=
    \frac{2^{r-1}-2^{a-1}-2^{a-1}-t_2}{2}=s-2^{a-2}
  $$
  elements from $\cS$ if $A\subseteq H$.
\end{proof}


\section{A generalization of the Solomon--Stiffler construction}
\label{sec_solomon_stiffler}

In \cite{solomon1965algebraically} Solomon and Stiffler constructed $[n,k,d]_2$ codes with $n=g(k,d)$ for all parameters with sufficiently
large minimum distance $d$. Here we want to show the generalization that the Griesmer upper bound for $n_{k/2}(s)$ can always be attained 
if $s$ is sufficiently large. Using a specific parameterization of the minimum distance $d$, the Griesmer bound in 
Inequality~(\ref{eq_griesmer_bound}) can be written more explicitly:
\begin{Lemma}
  \label{lemma_parameters_griesmer_code}
  Let $k$ and $d$ be positive integers. Write $d$ as
  \begin{equation}
    \label{eq_griesmer_representation_min_dist}
    d=\sigma \cdot 2^{k-1}-\sum_{i=1}^{k-1}\varepsilon_i \cdot 2^{i-1},
  \end{equation}
  where $\sigma\in\N_0$ and $\varepsilon_i\in\{0,1\}$ for all $1\le i\le k-1$. Then, Inequality~(\ref{eq_griesmer_bound})
  is satisfied with equality iff
  \begin{equation}
    \label{eq_griesmer_representation_length}
    n=\sigma\cdot\left(2^k-1\right)-\sum_{i=1}^{k-1}\varepsilon_i\cdot\left(2^i-1\right),
  \end{equation}
  which is equivalent to
  \begin{equation}
    \label{eq_griesmer_representation_species}
    n-d=\sigma\cdot\left(2^{k-1}-1\right)-\sum_{i=1}^{k-1}\varepsilon_i\cdot\left(2^{i-1}-1\right).
  \end{equation}
\end{Lemma}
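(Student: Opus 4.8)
The plan is to compute the Griesmer function $g(k,d)=\sum_{j=0}^{k-1}\lceil d/2^j\rceil$ explicitly from the representation~(\ref{eq_griesmer_representation_min_dist}) of $d$, and then to read off the equality case directly from the Griesmer bound itself. Note first that a representation of the stated form always exists and is unique: taking $\sigma$ minimal with $\sigma 2^{k-1}\ge d$ forces $\sigma 2^{k-1}-d\in\{0,1,\dots,2^{k-1}-1\}$, and the $\varepsilon_i$ are exactly the binary digits of that difference.

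The core step is the per-index identity
\begin{equation*}
  \left\lceil \frac{d}{2^j}\right\rceil
  = \sigma\cdot 2^{k-1-j}-\sum_{i=j+1}^{k-1}\varepsilon_i\cdot 2^{i-1-j}
  \qquad\text{for } 0\le j\le k-1 .
\end{equation*}
Denote the right-hand side by $N_j$; it is visibly an integer. Multiplying it by $2^j$ and using~(\ref{eq_griesmer_representation_min_dist}) to split off the terms with $i\le j$ gives $2^jN_j=d+\sum_{i=1}^{j}\varepsilon_i 2^{i-1}$, where the ``carry'' $\sum_{i=1}^{j}\varepsilon_i 2^{i-1}$ lies in $\{0,1,\dots,2^j-1\}$. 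Hence $d\le 2^jN_j<d+2^j$ with $2^j\mid 2^jN_j$, so $2^jN_j$ is the least multiple of $2^j$ that is at least $d$, i.e.\ $N_j=\lceil d/2^j\rceil$. (For $j=0$ the carry is empty and the identity reduces to~(\ref{eq_griesmer_representation_min_dist}).)

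Summing the identity over $j=0,\dots,k-1$ splits $g(k,d)$ into a $\sigma$-term $\sigma\sum_{j=0}^{k-1}2^{k-1-j}=\sigma(2^k-1)$ (a geometric series) and a double sum $\sum_{j=0}^{k-1}\sum_{i=j+1}^{k-1}\varepsilon_i 2^{i-1-j}$. Interchanging the order of summation, for each fixed $i\in\{1,\dots,k-1\}$ the inner index runs over $j=0,\dots,i-1$ and contributes $\varepsilon_i\sum_{j=0}^{i-1}2^{i-1-j}=\varepsilon_i(2^i-1)$. This yields $g(k,d)=\sigma(2^k-1)-\sum_{i=1}^{k-1}\varepsilon_i(2^i-1)$, which is Equation~(\ref{eq_griesmer_representation_length}). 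Since Inequality~(\ref{eq_griesmer_bound}) always holds, it holds with equality exactly when $n=g(k,d)$, i.e.\ exactly when~(\ref{eq_griesmer_representation_length}) holds. The equivalence with~(\ref{eq_griesmer_representation_species}) then follows by subtracting~(\ref{eq_griesmer_representation_min_dist}) from~(\ref{eq_griesmer_representation_length}) term by term, using $2^k-1-2^{k-1}=2^{k-1}-1$ and $2^i-1-2^{i-1}=2^{i-1}-1$.

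The only place requiring care is the per-index ceiling identity: one has to recognize that dividing $d$ by $2^j$ introduces exactly the correction $\sum_{i=1}^j\varepsilon_i2^{i-1}$ and that this lies in $[0,2^j-1]$. Beyond that I expect no real obstacle — the remaining work is bookkeeping with geometric series and keeping the index ranges straight when the order of the double sum is exchanged.
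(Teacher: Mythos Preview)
Your proof is correct. The paper states this lemma without proof, treating it as an elementary reformulation of the Griesmer bound, so your direct computation of $g(k,d)$ via the per-index ceiling identity is precisely the natural way to supply the omitted details, and there is nothing to compare against.
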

Given $k$ and $d$, Equation~(\ref{eq_griesmer_representation_min_dist})
  always determines $\sigma$ and the $\varepsilon_i$ uniquely. This is
  different for Equation~(\ref{eq_griesmer_representation_species}) given
  $k$ and $n-d=s$. Here it may happen that no solution with
  $0\le \varepsilon_i\le 1$ exists. By relaxing to
  $0\le \varepsilon_i\le 2$ we can ensure existence and uniqueness is
  enforced by additionally requiring $\varepsilon_j=0$ for all $j<i$
  where $\varepsilon_i=2$ for some $i$. The same is true for
  Equation~(\ref{eq_griesmer_representation_length}) given $k$ and $n$.
  For more details we refer to \cite[Chapter 2]{govaerts2003classifications}
  which also gives pointers to Hamada's work on minihypers.

\begin{Definition}
  \label{def_partitionable}
  Let $\sigma\in\N$, $\varepsilon_1,\dots,\varepsilon_{r-1}\in\mathbb{Z}$, and let
  $V$ denote the $r$-dimensional ambient space $\PG(r-1,2)$.
  We say that an $(n,r,s)$ system $\cS$ has \emph{type $\sigma[r]-\sum_{i=1}^{r-1}\varepsilon_i [i]$} if there 
  exist subspaces $S_1\subseteq \dots\subseteq S_{r-1}$ with $\dim\!\left(S_i\right)=i$ and
  \begin{equation}
    \sum_{S\in\cS} \chi_S=\sigma\cdot\chi_V-\sum_{i=1}^{r-1}\varepsilon_i\cdot \chi_{S_i},
  \end{equation}
  where $\chi_S$ denotes the characteristic function of a subspace $S$, i.e., $\chi_S(P)=1$ iff $P\subseteq S$ for every point $P$ and $\chi_S(P)=0$ otherwise. 
  We say that $\sigma[r]-\sum_{i=1}^{r-1}\varepsilon_i [i]$ is \emph{partitionable} if an $(n,r,s)$ system with type $\sigma[r]-\sum_{i=1}^{r-1}\varepsilon_i [i]$ exists for
  suitable parameters $n$ and $s$.
\end{Definition}

The notion of type $\sigma[r]-\sum_{i=1}^{r-1}\varepsilon_i [i]$ is motivated by the Solomon--Stiffler construction. E.g.\ $3[7]-1[4]-1[2]$ is 
an abbreviation for the construction taking all points of the ambient space $\PG(6,2)$ three times and subtracting the points of a line and a $4$-dimensional 
subspace. The resulting multiset of points corresponds to a $[3\cdot 127-1\cdot 15-1\cdot 3,7,3\cdot 64-1\cdot 8-1\cdot 2]_2=[363,7,182]_2$ code, which is 
optimal since it attains the Griesmer bound. Here we allow more flexibility by not restricting to $\varepsilon_i\in\{0,1\}$, so that the resulting codes might not 
be distance optimal. On the other hand we restrict the arrangement of the subspaces that are removed from (or added to) a suitable multiple of the ambient space for technical 
reasons. While simplifying the notation and allowing easier statements and proofs, some constructions are not covered by this definition. 

Note that all chains $S_1\subseteq\dots\subseteq S_{r-1}$ are isomorphic, so that the
notion of being partitionable does not depend on the choice of the
subspaces $S_1,\dots,S_{r-1}$. If $\sigma[r]-\sum_{i=1}^{r-1}\varepsilon_i [i]$ is partitionable, then 
also $0[r']-(-\sigma)[r]-\sum_{i=1}^{r-1}\varepsilon_i[i]$ is partitionable for all $r'>r$. 
The parameters of an $(n,r,s)$ system can be computed from the parameters of a partition: 
\begin{Lemma}
  \label{lemma_compute_parameters_from_partition}
  If $\cS$ is an $(n,r,s)$ system with type
  $\sigma[r]-\sum_{i=1}^{r-1}\varepsilon_i [i]$, then we have
  \begin{equation}
    \label{formula_n}
    n=\left(\sigma\cdot \left(2^r-1\right)-\sum_{i=1}^{r-1}\varepsilon_i\cdot\left(2^i-1\right)\right)/3,
  \end{equation}
  \begin{equation}
    \label{eq_s}
    s=\max_{1\le j\le r} \left(s_1 -\sum_{i=1}^{j-1} \varepsilon_i \cdot 2^{i-2}\right),
  \end{equation}
  where
  \begin{equation}
    \label{eq_s1}
    s_1=\left(\sigma\cdot\left(2^{r-2}-1\right)-\sum_{i=2}^{r-1}\varepsilon_i\cdot\left(2^{i-2}-1\right)+\frac{1}{2}\cdot \varepsilon_1\right)/\,3.
  \end{equation}
  Moreover, $\varepsilon_1$ is divisible by $2$ and
  \begin{equation}
    \label{packing_cond}
    \sum_{i=1}^{r-1}\varepsilon_i\cdot\left(2^i-1\right) \equiv \sigma\cdot\left(2^r-1\right)\pmod {3},
  \end{equation}
  where the right hand side is congruent to zero modulo $3$ if $r$ is even.
\end{Lemma}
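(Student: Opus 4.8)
The plan is to compute each of the four assertions directly from the defining identity $\sum_{S\in\cS}\chi_S = \sigma\cdot\chi_V - \sum_{i=1}^{r-1}\varepsilon_i\cdot\chi_{S_i}$, evaluating it against suitable point sets. First I would obtain \eqref{formula_n}: summing the identity over all points $P$ of $\PG(r-1,2)$, the left-hand side counts each of the $n$ lines of $\cS$ three times (since a line has $3$ points), giving $3n$; the right-hand side gives $\sigma\cdot(2^r-1) - \sum_{i=1}^{r-1}\varepsilon_i\cdot(2^i-1)$ because $|V| = 2^r-1$ and $|S_i| = 2^i-1$. Dividing by $3$ yields the formula, and forces \eqref{packing_cond} as the necessary integrality condition; when $r$ is even, $2^r-1\equiv 0\pmod 3$, so the right-hand side of \eqref{packing_cond} vanishes mod $3$.

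Next I would handle \eqref{eq_s} and \eqref{eq_s1}. For a hyperplane $H$, the number of lines of $\cS$ lying inside $H$ equals $\tfrac{1}{2}\bigl(|\cP(\cS)\cap H| - n\bigr)$: a line either lies in $H$ (contributing $3$ points) or meets it in exactly one point, so if $i$ lines lie in $H$ then $|\cP(\cS)\cap H| = 3i + (n-i) = n+2i$, exactly as in the proof of Lemma~\ref{lemma_binary_code}. Thus I need to maximize $|\cP(\cS)\cap H|$ over hyperplanes. Restricting the defining identity to $H$ and summing over the points of $H$ gives $|\cP(\cS)\cap H| = \sigma\cdot|V\cap H| - \sum_{i=1}^{r-1}\varepsilon_i\cdot|S_i\cap H|$, where $|V\cap H| = 2^{r-1}-1$ and $|S_i\cap H|$ is $2^i-1$ if $S_i\le H$ and $2^{i-1}-1$ otherwise. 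Since the $S_i$ form a chain, the hyperplanes split into $r$ classes: for $1\le j\le r$, the class where $S_{j-1}\le H$ but $S_j\not\le H$ (with $S_0 = \{0\}$ and $S_r = V$, so $j=r$ means $H\supseteq S_{r-1}$). On such an $H$, $|S_i\cap H| = 2^i-1$ for $i<j$ and $2^{i-1}-1$ for $i\ge j$. Plugging in, $|\cP(\cS)\cap H| = \sigma(2^{r-1}-1) - \sum_{i<j}\varepsilon_i(2^i-1) - \sum_{i\ge j}\varepsilon_i(2^{i-1}-1)$, and the corresponding count of contained lines is $s_j := \tfrac{1}{2}(|\cP(\cS)\cap H| - n)$. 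Substituting the expression for $n$ from \eqref{formula_n} and simplifying, the $j=1$ case (all $|S_i\cap H| = 2^{i-1}-1$) gives exactly $s_1$ as in \eqref{eq_s1}, where the $\tfrac{1}{2}\varepsilon_1$ term arises because $|S_1\cap H| = 2^{0}-1 = 0$ while the generic contribution $\varepsilon_1(2^{-1}-1)$ bookkept in $n/3$ must be corrected; and passing from $s_{j-1}$ to $s_j$ changes exactly the $S_{j-1}$-term, from $\varepsilon_{j-1}(2^{j-2}-1)$ to $\varepsilon_{j-1}(2^{j-1}-1)$, i.e. subtracts $\varepsilon_{j-1}\cdot 2^{j-2}$ from the point count and hence $\varepsilon_{j-1}\cdot 2^{j-3}$ from the line count — wait, one must track the factor of $\tfrac12$ carefully, which is where the $2^{i-2}$ in \eqref{eq_s} comes from. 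Taking the maximum over $j$ yields \eqref{eq_s}; I would also need to check that every one of the $r$ hyperplane classes is nonempty, which follows from the chain structure and $r>2$.

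Finally, the divisibility of $\varepsilon_1$ by $2$: the quantity $s_1$ in \eqref{eq_s1} must be a nonnegative integer (it counts lines in a hyperplane), and every term in the numerator except $\tfrac12\varepsilon_1$ is an integer, so $\tfrac12\varepsilon_1\in\tfrac13\mathbb{Z}$; combined with the integrality already forced on $n$ this pins down $\varepsilon_1\equiv 0\pmod 2$. (Alternatively, and more cleanly, one argues that $|\cP(\cS)\cap H| - n = 2\cdot(\text{number of lines in }H)$ is even for every $H$, so the parity of $|\cP(\cS)\cap H|$ is independent of $H$; comparing a hyperplane through $S_1$ with one not through $S_1$ changes the count by $\varepsilon_1\cdot(2^1-1 - (2^0-1)) = \varepsilon_1$, which therefore must be even.) The main obstacle is purely bookkeeping: correctly tracking the factors of $\tfrac12$ and the offset between $n/3$ and the hyperplane counts so that the $\tfrac12\varepsilon_1$ in \eqref{eq_s1} and the $2^{i-2}$ exponents in \eqref{eq_s} come out exactly right; there is no conceptual difficulty once the hyperplane-class decomposition is set up.
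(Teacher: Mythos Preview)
Your proposal is correct and follows essentially the same route as the paper: count points globally to get \eqref{formula_n} and hence \eqref{packing_cond}; classify hyperplanes by the index $j$ at which the chain $S_1\le\cdots\le S_{r-1}$ first fails to be contained; compute $\cM(H)$ and then $s_j=(\cM(H)-n)/2$ in each class; and read off $s=\max_j s_j$. The paper deduces $2\mid\varepsilon_1$ from the integrality of $s_2$, which is your second (cleaner) argument; your observation that each of the $r$ hyperplane classes is nonempty is a point the paper leaves implicit.
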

\begin{proof}
  Let $\cM$ be the multiset of points covered by the elements of $\cS$ and
  $S_1\subseteq\dots\subseteq S_{r-1}$ be subspaces as in
  Definition~\ref{def_partitionable}. Since $\cM$ has cardinality
  $$
    \sigma\cdot\left(2^r-1\right)-\sum_{i=1}^{r-1}\varepsilon_i\cdot\left(2^i-1\right) 
  $$
  and one line contains 3~points, we conclude Equation~(\ref{formula_n}).

  For an arbitrary hyperplane $H$ let $1\le j\le r$ denote the minimal
  integer such that $S_j\not\subseteq H$, where we set $j=r$ if $H=S_{r-1}$. Let $\cM(H)$ denote the number of 
  points of the multiset $\cM$ restricted to hyperplane $H$. When extending the notion $\cM(P)$ of the multiplicity of a point $P$ in 
  a multiset of points $\cM$ additively to arbitrary subspaces $S$ via $\cM(S):=\sum_{P\le S} \cM(P)$, this becomes a special case. 
  Counting points gives
  $$
    \cM(H)= \sigma\cdot\left(2^{r-1}-1\right)-\sum_{i=1}^{j-1}\varepsilon_i\cdot\left(2^i-1\right)-
    \sum_{i=j}^{r-1} \varepsilon_i\cdot\left(2^{i-1}-1\right)
    =\sigma\cdot\left(2^{r-1}-1\right)-\sum_{i=1}^{r-1}\varepsilon_i\cdot\left(2^{i-1}-1\right)
    -\sum_{i=1}^{j-1} \varepsilon_i \cdot 2^{i-1}.
  $$
  The number $s_j$ of elements of $\cS$ contained in $H$ is given by
  $\left(\cM(H)-n)\right)/2$, so that
  \begin{eqnarray*}
    s_j &=& \left( \sigma\cdot\left(2^{r-1}-1\right)-\sum_{i=1}^{r-1}\varepsilon_i\cdot\left(2^{i-1}-1\right)
    -\sum_{i=1}^{j-1} \varepsilon_i\cdot 2^{i-1}
    -\left(\sigma\cdot\left(2^r-1\right)-\sum_{i=1}^{r-1}\varepsilon_i\cdot\left(2^i-1\right)\right)\cdot\frac{1}{3} \right)/2 \\
    &=&  \left(\sigma\cdot\left(2^{r-1}-2\right)-\sum_{i=1}^{r-1} \varepsilon_i\cdot\left(2^{i-1}-2\right) \right)/\,6-\sum_{i=1}^{j-1} \varepsilon_i \cdot 2^{i-2}\\
    &= & \left(\sigma\cdot\left(2^{r-2}-1\right)-\sum_{i=2}^{r-1}\varepsilon_i\cdot\left(2^{i-2}-1\right)+\frac{1}{2}\cdot \varepsilon_1 \right)/\,3-\sum_{i=1}^{j-1} \varepsilon_i\cdot 2^{i-2}.
  \end{eqnarray*}
  This verifies Equation~(\ref{eq_s1}) and yields
  \begin{equation}
    s_j=s_1-\sum_{i=1}^{j-1} \varepsilon_i \cdot 2^{i-2}
  \end{equation}
  for $2\le j\le r$, which implies Equation~(\ref{eq_s}). 
  From $s_2\in\N$ we conclude that $\varepsilon_1$ is divisible by $2$. Equation~(\ref{formula_n})
  implies Equation~(\ref{packing_cond}) and $2^r-1$ is divisible by $3$ iff $r$ is even.
\end{proof}
\begin{Corollary}
  If all $\varepsilon_i$ are nonnegative, then $s=s_1$ (using the notation from Lemma~\ref{lemma_compute_parameters_from_partition}).
\end{Corollary}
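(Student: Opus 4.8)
The plan is to read the value of $s$ directly off Equation~(\ref{eq_s}) in Lemma~\ref{lemma_compute_parameters_from_partition} and observe that, under the nonnegativity hypothesis, every term in the maximum is bounded above by $s_1$, while the index $j=1$ attains $s_1$ exactly.

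Concretely, Equation~(\ref{eq_s}) reads
\[
  s=\max_{1\le j\le r} \left(s_1 -\sum_{i=1}^{j-1} \varepsilon_i \cdot 2^{i-2}\right).
\]
First I would note that for $j=1$ the inner sum is empty, so the candidate value for that index is exactly $s_1$; hence $s\ge s_1$. For the reverse inequality, fix any $j$ with $2\le j\le r$. Each $\varepsilon_i\ge 0$ by hypothesis, and each coefficient $2^{i-2}$ is positive; the only half-integer coefficient, namely $2^{-1}$ occurring at $i=1$, is multiplied by $\varepsilon_1$, which is a nonnegative even integer by the divisibility statement already proved in Lemma~\ref{lemma_compute_parameters_from_partition}. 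Therefore $\sum_{i=1}^{j-1}\varepsilon_i\cdot 2^{i-2}\ge 0$, which gives $s_1-\sum_{i=1}^{j-1}\varepsilon_i\cdot 2^{i-2}\le s_1$. Taking the maximum over $j$ yields $s\le s_1$, and combining the two inequalities gives $s=s_1$.

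The argument is entirely elementary and I do not anticipate any real obstacle; the only point requiring a moment's care is the $i=1$ term, which is handled by the congruence $2\mid\varepsilon_1$ from Lemma~\ref{lemma_compute_parameters_from_partition}. Equivalently, one may simply remark that the map $j\mapsto s_j=s_1-\sum_{i=1}^{j-1}\varepsilon_i\cdot 2^{i-2}$ is weakly decreasing in $j$ once all $\varepsilon_i\ge 0$, so its maximum over $1\le j\le r$ is attained at $j=1$ and equals $s_1$.
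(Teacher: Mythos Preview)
Your argument is correct and is exactly the intended one: the paper states this corollary without proof because it follows immediately from Equation~(\ref{eq_s}), and you have simply written out that immediate observation. The remark about $2\mid\varepsilon_1$ is harmless but unnecessary for the inequality itself, since $\varepsilon_1\ge 0$ and $2^{-1}>0$ already give $\varepsilon_1\cdot 2^{-1}\ge 0$.
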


\begin{Corollary}
  \label{cor_compute_parameters_from_partition_series}
  If $\cS_t$ is an $\left(n_t,r,s_t\right)$ system with type $\left(\sigma+t\cdot\tfrac{3}{2^{\gcd(r,2)}-1}\right)\cdot [r]-\sum_{i=2}^{r-1}\varepsilon_i [i]$, 
  where $\varepsilon_2,\dots,\varepsilon_{r-1}\in \N$, then we have
  \begin{equation}
    n_t=t\cdot\frac{2^r-1}{2^{\gcd(r,2)}-1}+\left(\sigma\cdot\left(2^r-1\right)-\sum_{i=2}^{r-1}\varepsilon_i\cdot\left(2^i-1\right)\right)/3,
  \end{equation}
  \begin{equation}
     s_t=t\cdot\frac{2^{r-2}-1}{2^{\gcd(r,2)}-1} +\left(\sigma\cdot\left(2^{r-2}-1\right)-\sum_{i=2}^{r-1}\varepsilon_i\cdot\left(2^{i-2}-1\right)\right)/3,
  \end{equation}
  and
  \begin{equation}
     n_t-s_t=    t\cdot\frac{3}{2^{\gcd(r,2)}-1}\cdot 2^{r-2} +
     \sigma\cdot 2^{r-2}-\sum_{i=2}^{r-1}\varepsilon_i\cdot 2^{i-2}.
  \end{equation}
\end{Corollary}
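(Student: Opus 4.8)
The plan is to read off Corollary~\ref{cor_compute_parameters_from_partition_series} as a direct specialization of Lemma~\ref{lemma_compute_parameters_from_partition}. Indeed, the type $\left(\sigma+t\cdot\tfrac{3}{2^{\gcd(r,2)}-1}\right)\cdot[r]-\sum_{i=2}^{r-1}\varepsilon_i[i]$ is a type of the shape $\sigma'[r]-\sum_{i=1}^{r-1}\varepsilon_i'[i]$ treated in Lemma~\ref{lemma_compute_parameters_from_partition}, with $\sigma'=\sigma_t:=\sigma+t\cdot\tfrac{3}{2^{\gcd(r,2)}-1}$, with $\varepsilon_1'=0$, and with $\varepsilon_i'=\varepsilon_i$ for $2\le i\le r-1$. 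I would first record that $\tfrac{3}{2^{\gcd(r,2)}-1}$ equals $3$ when $r$ is odd and $1$ when $r$ is even; in particular it is a positive integer, so $\sigma_t$ is a legitimate multiplicity, and since $\varepsilon_1'=0$ is even, the divisibility statement and the packing condition~(\ref{packing_cond}) from Lemma~\ref{lemma_compute_parameters_from_partition} are automatically respected (in any case they hold because $\cS_t$ is assumed to exist).

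Next I would substitute into the formulas of Lemma~\ref{lemma_compute_parameters_from_partition}. Plugging $\sigma_t$ and $(\varepsilon_i')$ into Equation~(\ref{formula_n}) gives
\[
  n_t=\bigl(\sigma_t\cdot(2^r-1)-\textstyle\sum_{i=2}^{r-1}\varepsilon_i\cdot(2^i-1)\bigr)/3 ,
\]
and separating off the summand containing $t$, using $\tfrac{3}{2^{\gcd(r,2)}-1}\cdot\tfrac{2^r-1}{3}=\tfrac{2^r-1}{2^{\gcd(r,2)}-1}$, yields the asserted expression for $n_t$. For $s_t$ I would invoke the Corollary immediately preceding Corollary~\ref{cor_compute_parameters_from_partition_series}: since all $\varepsilon_i'$ are nonnegative, the maximum in Equation~(\ref{eq_s}) is attained at $j=1$, hence $s_t=s_1$ with $s_1$ as in Equation~(\ref{eq_s1}). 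Evaluating Equation~(\ref{eq_s1}) with $\varepsilon_1'=0$ and $\sigma_t$ in place of $\sigma$, and again peeling off the $t$-dependent part via $\tfrac{3}{2^{\gcd(r,2)}-1}\cdot\tfrac{2^{r-2}-1}{3}=\tfrac{2^{r-2}-1}{2^{\gcd(r,2)}-1}$, gives the claimed formula for $s_t$.

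Finally, the formula for $n_t-s_t$ follows by subtracting the two expressions coefficient by coefficient and simplifying with the identities $2^r-2^{r-2}=3\cdot 2^{r-2}$ and $2^i-2^{i-2}=3\cdot 2^{i-2}$; the factor $3$ then cancels the $1/3$, producing $t\cdot\tfrac{3}{2^{\gcd(r,2)}-1}\cdot 2^{r-2}+\sigma\cdot 2^{r-2}-\sum_{i=2}^{r-1}\varepsilon_i\cdot 2^{i-2}$. I do not expect any real obstacle here: all the substantive content sits in Lemma~\ref{lemma_compute_parameters_from_partition} together with the routine observation that for nonnegative $\varepsilon_i$ the worst hyperplane is the one avoiding $S_1$; the rest is bookkeeping. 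The only point worth stating explicitly is that the hypothesis $\varepsilon_i\in\N$ (hence nonnegative) is precisely what licenses the replacement $s_t=s_1$.
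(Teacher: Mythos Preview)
Your proposal is correct and is exactly the intended argument: the paper states this result as a Corollary without proof, treating it as an immediate specialization of Lemma~\ref{lemma_compute_parameters_from_partition} together with the preceding Corollary (that $s=s_1$ when all $\varepsilon_i\ge 0$). Your bookkeeping, including the use of $\varepsilon_i\in\N$ to justify $s_t=s_1$ and the arithmetic simplifications for $n_t-s_t$, is accurate.
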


Next we state a few constructions.
\begin{Lemma}
  \label{lemma_partitionable_union}
  If $\sigma[r]-\sum_{i=1}^{r-1}\varepsilon_i[i]$ and $\sigma'[r]-\sum_{i=1}^{r-1}\varepsilon_i'[i]$ are
  partitionable, then $\left(\sigma+\sigma'\right)\cdot[r]-\sum_{i=1}^{r-1}\left(\varepsilon_i+\varepsilon_i'\right)\cdot[i]$ is partitionable.
\end{Lemma}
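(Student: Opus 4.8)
The plan is to realise the two hypothesised partitions with respect to a \emph{common} flag of subspaces and then simply take the multiset union of the two line systems. So first I would fix an $(n,r,s)$ system $\cS$ of type $\sigma[r]-\sum_{i=1}^{r-1}\varepsilon_i[i]$, witnessed by a chain $S_1\le\dots\le S_{r-1}$ with $\dim(S_i)=i$, and an $(n',r,s')$ system $\cS'$ of type $\sigma'[r]-\sum_{i=1}^{r-1}\varepsilon_i'[i]$, witnessed by some chain $S_1'\le\dots\le S_{r-1}'$; both exist by the assumption that the two types are partitionable.

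The key step is to move $\cS'$ onto the first flag. Since the collineation group of $\PG(r-1,2)$, which over $\F_2$ coincides with $\operatorname{PGL}(r,2)$, acts transitively on complete flags, there is a collineation $\phi$ with $\phi(S_i')=S_i$ for all $1\le i\le r-1$ — this is exactly the remark preceding the lemma that ``all chains $S_1\le\dots\le S_{r-1}$ are isomorphic''. Applying $\phi$ to every line of $\cS'$ produces a multiset $\phi(\cS')$ of $n'$ lines; since $\phi$ maps hyperplanes to hyperplanes and preserves incidence, $\phi(\cS')$ is again an $(n',r,s')$ system, and because $\chi_{\phi(S)}=\chi_S\circ\phi^{-1}$ and $\phi$ permutes the points of $V$, we get $\sum_{S\in\phi(\cS')}\chi_S=\sigma'\cdot\chi_V-\sum_{i=1}^{r-1}\varepsilon_i'\cdot\chi_{S_i}$, i.e.\ $\phi(\cS')$ has type $\sigma'[r]-\sum_{i=1}^{r-1}\varepsilon_i'[i]$ with respect to the \emph{same} chain $S_1\le\dots\le S_{r-1}$.

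Finally I would set $\mathcal{T}:=\cS\uplus\phi(\cS')$, the multiset union, which is a multiset of $n+n'$ lines in $\PG(r-1,2)$. Its point-multiplicity function is the pointwise sum of the two, so
\begin{equation*}
  \sum_{S\in\mathcal{T}}\chi_S=\left(\sigma+\sigma'\right)\cdot\chi_V-\sum_{i=1}^{r-1}\left(\varepsilon_i+\varepsilon_i'\right)\cdot\chi_{S_i}.
\end{equation*}
Letting $s''$ be the maximum number of elements of $\mathcal{T}$ contained in a hyperplane, $\mathcal{T}$ is an $(n+n',r,s'')$ system — the clause ``some hyperplane contains exactly $s''$ elements'' is satisfied by the choice of $s''$ — of type $\left(\sigma+\sigma'\right)\cdot[r]-\sum_{i=1}^{r-1}\left(\varepsilon_i+\varepsilon_i'\right)\cdot[i]$, which shows this type is partitionable. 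The only point needing genuine care is the transitivity of the collineation group on complete flags (classical); the rest is bookkeeping with characteristic functions, and note that we do not and need not control $s''$ explicitly, since Definition~\ref{def_partitionable} only asks for existence for \emph{suitable} parameters $n$ and $s$.
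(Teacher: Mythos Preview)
Your proof is correct and follows essentially the same approach as the paper: fix a common flag, realise both systems with respect to it, and take the multiset union. The only difference is cosmetic—the paper invokes the earlier remark that partitionability is flag-independent to assume from the outset that $\cS$ and $\cS'$ use the same chain $S_1\le\dots\le S_{r-1}$, whereas you make the alignment explicit via a collineation $\phi$; both amount to the same transitivity of $\operatorname{PGL}(r,2)$ on complete flags.
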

\begin{proof}
  Fix some subspaces $S_1\subseteq\dots\subseteq S_{r-1}$ as in Definition~\ref{def_partitionable}.
  Let $\cS$ be an $(n,r,s)$ system with type
  $\sigma[r]-\sum_{i=1}^{r-1}\varepsilon_i[i]$ and $\cS'$ be an $(n',r,s')$ system with type
  $\sigma'[r]-\sum_{i=1}^{r-1}\varepsilon_i'[i]$, then the multiset union of
  the elements of $\cS$ and $\cS'$ is an $(n'',r,s'')$ system with type  
  $\left(\sigma+\sigma'\right)\cdot[r]-\sum_{i=1}^{r-1}\left(\varepsilon_i+\varepsilon_i'\right)\cdot[i]$.
\end{proof}

A set of lines that partitions $\PG(r-1,2)$ is called a \emph{line spread}. They do exist iff $r$ is even.
\begin{Lemma}
  \label{lemma_vsp_type}
  For $r>a\ge 2$ with $r\equiv a\pmod 2$ and $\sigma\in\N_{\ge 1}$
  we have that $\sigma[r]-\sigma[a]$ is partitionable.
\end{Lemma}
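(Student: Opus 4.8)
The plan is to dispose of arbitrary $\sigma$ by a trivial induction and to concentrate the real work on producing a single system of type $1[r]-1[a]$. Concretely, once $1[r]-1[a]$ is known to be partitionable, Lemma~\ref{lemma_partitionable_union} applied to the pair $\sigma[r]-\sigma[a]$ and $1[r]-1[a]$ shows that $(\sigma+1)[r]-(\sigma+1)[a]$ is partitionable; starting from the case $\sigma=1$ and inducting on $\sigma$ then yields the claim for every $\sigma\ge 1$.

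To realize $1[r]-1[a]$ I would use a vector space partition $\cV$ of $V:=\PG(r-1,2)$ consisting of lines together with exactly one $a$-dimensional subspace $A$. For $a\ge 3$ such a $\cV$, of type $2^{t_2}a^1$, is furnished directly by Lemma~\ref{lemma_vsp}, whose hypotheses $r>a>2$ and $r\equiv a\pmod 2$ are exactly what we are given. Letting $\cS$ be the set of the $t_2$ lines of $\cV$, the defining property of a vector space partition --- that every point of $V$ lies in precisely one member of $\cV$ --- translates at once into the characteristic-function identity $\sum_{S\in\cS}\chi_S=\chi_V-\chi_A$. Extending $A$ to any full flag $S_1\le\dots\le S_{r-1}$ with $S_a=A$ (by the remark after Definition~\ref{def_partitionable} this choice is immaterial), this exhibits $\cS$, regarded as an $(n,r,s)$ system for whatever values of $n$ and $s$ it forces, as having type $1[r]-1[a]$ with $\varepsilon_a=1$ and $\varepsilon_i=0$ otherwise.

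The one case not reached by Lemma~\ref{lemma_vsp} is $a=2$, which must be dispatched separately: then $r\equiv 2\pmod 2$ forces $r$ to be even, so a line spread $\mathcal{F}$ of $\PG(r-1,2)$ exists; choosing any $L_0\in\mathcal{F}$ and putting $\cS:=\mathcal{F}\setminus\{L_0\}$ gives $\sum_{S\in\cS}\chi_S=\chi_V-\chi_{L_0}$, i.e.\ a system of type $1[r]-1[2]$. Combining the two cases with the induction on $\sigma$ finishes the proof. I do not anticipate a genuine obstacle here; the only points to watch are that Lemma~\ref{lemma_vsp} really does require $a>2$ (whence the ad hoc line-spread argument for $a=2$) and that any multiset of $n$ lines is automatically an $(n,r,s)$ system for the $s$ it determines, so that verifying the displayed characteristic-function identity is all that the notion of being partitionable demands.
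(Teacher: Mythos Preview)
Your proof is correct and follows essentially the same approach as the paper. The paper invokes Lemma~\ref{lemma_construction_x_preparation} (which itself takes the $2$-dimensional members of a vector space partition of type $2^{t_2}a^1$) to obtain a system of type $[r]-[a]$ for $a>2$, and uses a line spread minus one element for $a=2$; you do the same, just citing Lemma~\ref{lemma_vsp} directly and writing out the characteristic-function identity rather than routing through Lemma~\ref{lemma_construction_x_preparation}. The passage to general $\sigma$ via Lemma~\ref{lemma_partitionable_union} is exactly the paper's ``use $\sigma$ copies thereof''.
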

\begin{proof}
  If $a>2$, then Lemma~\ref{lemma_construction_x_preparation} yields the
  existence of an $(n,r,s)$ system $\cS$ with
  type $[r]-[a]$ and we can use $\sigma$ copies thereof. For $a=2$ we
  replace $\cS$ by a line spread $\PG(r-1,2)$ where we remove
  an arbitrary element.
\end{proof}

\begin{Theorem}
  \label{thm_partition}(Cf.~\cite[page 83]{hirschfeld1998projective},  \cite[Corollary 8]{el2011lambda}, or \cite[Lemma 2]{krotov2023multispreads}) 
  For each $r\ge 2$ we have that $[r]$ is partitionable if $r$ is even and that $3[r]$ is partitionable if $r$ is odd.
\end{Theorem}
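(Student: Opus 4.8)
The plan is to unwind what ``partitionable'' means in the two cases. A multiset $\cS$ of lines has type $[r]$ precisely when $\sum_{S\in\cS}\chi_S=\chi_V$, i.e.\ when $\cS$ is a \emph{line spread} of $\PG(r-1,2)$: every point is covered exactly once. Likewise $3[r]$ is partitionable iff $\sum_{S\in\cS}\chi_S=3\chi_V$, i.e.\ iff $\cS$ is a $3$-fold line spread. So Theorem~\ref{thm_partition} is the assertion that $\PG(r-1,2)$ carries a line spread when $r$ is even and a $3$-fold line spread when $r$ is odd. (Since a line has $3$ points, an exact $\lambda$-fold cover forces $3\mid\lambda(2^r-1)$, hence $3\mid\lambda$ when $r$ is odd; this explains why $3[r]$, rather than $[r]$, is the relevant multiple in the odd case, though the converse is not needed for the proof.) I would treat the two parities separately.

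For $r$ even this is the classical existence of a line spread of $\PG(r-1,2)$; concretely one identifies $\F_2^r$ with $\F_4^{r/2}$, notes that each $1$-dimensional $\F_4$-subspace consists of $3$ nonzero vectors and hence is a line of $\PG(r-1,2)$, and observes that these $\F_4$-lines partition the nonzero vectors. Thus $\sum\chi_S=\chi_V$ and $[r]$ is partitionable. (For $r=2$ the space $\PG(1,2)$ is a single line, which is its own spread.)

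For $r$ odd I would start from the base case $r=3$: the Fano plane $\PG(2,2)$ has $7$ points and $7$ lines, and each point lies on exactly $3$ of them, so the multiset $\cS$ of all $7$ lines satisfies $\sum_{L\in\cS}\chi_L=3\chi_V$, i.e.\ $3[3]$ is partitionable. For odd $r\ge 5$ I would combine two ingredients. First, Lemma~\ref{lemma_vsp_type} with $a=3$ and $\sigma=3$ (legitimate since $r>3$ and $r\equiv 3\pmod 2$) gives that $3[r]-3[3]$ is partitionable. Second, the Fano observation ``$3[3]$ partitionable'' together with the remark that partitionability is preserved when the ambient space is enlarged (the removed $3$-space playing the role of $S_3$ in the chain of Definition~\ref{def_partitionable}) gives that $0[r]+3[3]$ is partitionable. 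Taking the multiset union of the two line systems and invoking Lemma~\ref{lemma_partitionable_union} then yields a system of type $(3+0)[r]-(3-3)[3]=3[r]$, as desired.

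I do not expect a serious obstacle: the argument is an assembly of facts already available in the excerpt, the only irreducible inputs being the existence of line spreads for even $r$ (classical) and the incidence count in the Fano plane. The one point requiring care is the sign bookkeeping in Definition~\ref{def_partitionable} and Lemma~\ref{lemma_partitionable_union}, i.e.\ checking that ``adding back'' the $7$ Fano lines on the removed $3$-space cancels exactly the $-3\,\chi_{S_3}$ term, leaving $3\chi_V$. Alternatively, the entire theorem may simply be attributed to the cited literature on spreads and multispreads of $\PG(r-1,2)$.
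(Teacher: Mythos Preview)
Your proposal is correct and essentially follows the paper's approach. The odd case is identical to the paper's argument (base case $r=3$ via the seven Fano lines, then Lemma~\ref{lemma_vsp_type} with $a=3$, $\sigma=3$ combined with Lemma~\ref{lemma_partitionable_union}); for even $r$ you give the Desarguesian spread directly, whereas the paper takes the formally parallel route $[r]-[2]$ plus $[2]$ via Lemma~\ref{lemma_vsp_type} and Lemma~\ref{lemma_partitionable_union}, but since the proof of Lemma~\ref{lemma_vsp_type} for $a=2$ itself invokes a line spread, this is the same content with one layer of indirection removed.
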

\begin{proof}
  The statement is obvious for $r=2$ and for $r=3$ we consider the set of all seven lines in $\PG(2,2)$. From 
  Lemma~\ref{lemma_vsp_type} we deduce that $[r]-[2]$ is partitionable for all even $r\ge 4$ and that $3[r]-3[3]$
  is partitionable for all odd $r\ge 5$, so that the statement follows from Lemma~\ref{lemma_partitionable_union}.
\end{proof}



\begin{Lemma}
  \label{lemma_sigma_constraint}
  If $x[r]-\sum_{i=2}^{r-1}\varepsilon_i [i]$ is partitionable for $x\in\{\sigma,\sigma'\}$ then
  $$\left(\sigma+t\cdot\frac{3}{2^{\gcd(r,2)}-1}\right)\cdot[r]-\sum_{i=2}^{r-1}\varepsilon_i [i]$$ is partitionable for all $t\ge 0$
  and we have $\sigma\equiv \sigma'\pmod {\frac{3}{2^{\gcd(r,2)}-1}}$.
\end{Lemma}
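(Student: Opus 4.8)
The plan is to treat the two assertions separately, each by reduction to results already established.

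\textbf{Partitionability.} First I would record that the coefficient $\tfrac{3}{2^{\gcd(r,2)}-1}$ equals $1$ when $r$ is even and $3$ when $r$ is odd, and that in both cases $\tfrac{3}{2^{\gcd(r,2)}-1}\cdot[r]$ is partitionable: this is exactly Theorem~\ref{thm_partition} (namely $[r]$ for even $r$, and $3[r]$ for odd $r$, with all $\varepsilon_i=0$). The claim then follows by induction on $t$. The base case $t=0$ is the hypothesis that $\sigma[r]-\sum_{i=2}^{r-1}\varepsilon_i[i]$ is partitionable; note that one of the two given values of $x$ already suffices here, so $\sigma'$ plays no role in this half. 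For the inductive step I would apply Lemma~\ref{lemma_partitionable_union} to the partitionable type $\bigl(\sigma+t\cdot\tfrac{3}{2^{\gcd(r,2)}-1}\bigr)[r]-\sum_{i=2}^{r-1}\varepsilon_i[i]$ and the partitionable type $\tfrac{3}{2^{\gcd(r,2)}-1}\cdot[r]$; their sum is precisely $\bigl(\sigma+(t+1)\cdot\tfrac{3}{2^{\gcd(r,2)}-1}\bigr)[r]-\sum_{i=2}^{r-1}\varepsilon_i[i]$.

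\textbf{The congruence.} Here I would use the packing condition~(\ref{packing_cond}) of Lemma~\ref{lemma_compute_parameters_from_partition}. Since $x[r]-\sum_{i=2}^{r-1}\varepsilon_i[i]$ is partitionable for $x\in\{\sigma,\sigma'\}$, I may choose realizing $(n,r,s)$ systems of these types (in which $\varepsilon_1=0$) and apply~(\ref{packing_cond}) to each, obtaining
\[
  \sum_{i=2}^{r-1}\varepsilon_i\left(2^i-1\right)\equiv\sigma\left(2^r-1\right)\pmod 3
  \quad\text{and}\quad
  \sum_{i=2}^{r-1}\varepsilon_i\left(2^i-1\right)\equiv\sigma'\left(2^r-1\right)\pmod 3 .
\]
Subtracting gives $(\sigma-\sigma')\left(2^r-1\right)\equiv 0\pmod 3$. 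If $r$ is even, then $\tfrac{3}{2^{\gcd(r,2)}-1}=1$ and there is nothing to prove; if $r$ is odd, then $2^r\equiv(-1)^r\equiv-1\pmod 3$, so $2^r-1\equiv 1\pmod 3$ is a unit, whence $\sigma\equiv\sigma'\pmod 3$, which is the claimed congruence since $\tfrac{3}{2^{\gcd(r,2)}-1}=3$ in this case.

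The whole argument is bookkeeping on top of Theorem~\ref{thm_partition}, Lemma~\ref{lemma_partitionable_union}, and Lemma~\ref{lemma_compute_parameters_from_partition}, so I do not expect a genuine obstacle. The only mild subtlety is to notice that the two case splits — even versus odd $r$ on the geometric side, and $2^r-1$ being a zero divisor versus a unit modulo $3$ on the arithmetic side — coincide exactly, which is what makes the single formula $\tfrac{3}{2^{\gcd(r,2)}-1}$ do the job in both assertions at once.
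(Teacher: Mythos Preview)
Your proof is correct and follows essentially the same route as the paper: Theorem~\ref{thm_partition} together with Lemma~\ref{lemma_partitionable_union} for the partitionability claim, and the packing condition~(\ref{packing_cond}) (with the even-$r$ case being trivial since the modulus is~$1$) for the congruence. Your write-up is simply more explicit---spelling out the induction on~$t$ and the computation $2^r-1\equiv 1\pmod 3$ for odd~$r$---but there is no substantive difference.
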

\begin{proof}
  Note that $2^{\gcd(r,2)}-1$ equals $3$ iff $r$ is even and $1$ iff $r$ is odd, so that Theorem~\ref{thm_partition} and Lemma~\ref{lemma_partitionable_union} imply the first statement. 
  For even $r$ the statement $\sigma\equiv \sigma'\pmod {\frac{3}{2^{\gcd(r,2)}-1}}$ is true since $\sigma,\sigma'\in\N$. For odd $r$ we use Equation~(\ref{packing_cond}).
\end{proof}

\begin{Definition}
  We say that $\star[r]-\sum_{i=1}^{r-1}\varepsilon_i [i]$ is
  partitionable if there exists an integer $\sigma$
  such that $\sigma[r]-\sum_{i=1}^{r-1}\varepsilon_i [i]$ is
  partitionable.
\end{Definition}

\begin{Theorem}
  \label{thm_main}
  Let $r\ge 3$, $g:=\gcd(r,2)$, and
  $\varepsilon_2,\dots,\varepsilon_{r-1}\in \mathbb{Z}$ such that
  \begin{equation}
    \label{eq_packing_condition_lemma}
    \sum_{i=2}^{r-1} \varepsilon_i\cdot\left(2^i-1\right) \equiv 0\pmod {2^g-1}.
  \end{equation}
  Then $\star[r]-\sum\limits_{i=2}^{r-1}\varepsilon_i[i]$ is partitionable.
\end{Theorem}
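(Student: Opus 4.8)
The plan is to translate the claim into a statement about point multisets and then build the required configuration out of small pieces. Fix a complete flag $S_2\le S_3\le\cdots\le S_{r-1}$ in $V:=\PG(r-1,2)$; a type $\sigma[r]-\sum_{i=2}^{r-1}\varepsilon_i[i]$ is exactly the multiset $\cM$ of multiplicity $\sigma-\sum_{i\ge j}\varepsilon_i$ on $S_j\setminus S_{j-1}$ (and $\sigma$ off $S_{r-1}$), so being partitionable means this flag-constant multiset is a union of lines. For $\sigma$ large all multiplicities are non-negative, and $|\cM|=\sigma(2^r-1)-\sum_i\varepsilon_i(2^i-1)$ is a multiple of $3$, by Equation~(\ref{packing_cond}), precisely when $\sum_i\varepsilon_i(2^i-1)\equiv\sigma(2^r-1)\pmod 3$; since $2^r-1$ is divisible by $3$ iff $r$ is even, this is forced for $r$ even by hypothesis~(\ref{eq_packing_condition_lemma}) (where $2^g-1=3$) and for $r$ odd it only fixes $\sigma$ modulo $3$, which we may choose. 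So the content is that this evidently necessary divisibility is also sufficient, for some $\sigma$.

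Next I would assemble $\cM$ as a multiset union of partitionable atoms and quote Lemma~\ref{lemma_partitionable_union}, using Theorem~\ref{thm_partition} and Lemma~\ref{lemma_sigma_constraint} to raise the ambient multiplicities of the individual atoms so that their $\sigma$'s sum to a common value of the right residue. From $\sigma\chi_V-\sum_i\varepsilon_i\chi_{S_i}=(\sigma-\sum_i\varepsilon_i)\chi_V+\sum_i\varepsilon_i(\chi_V-\chi_{S_i})$ the first summand is Theorem~\ref{thm_partition}; for $i\equiv r\pmod 2$ the atom $\star[r]-\varepsilon_i[i]$ is partitionable by taking $\varepsilon_i$ copies of the $[r]-[i]$ system of Lemma~\ref{lemma_vsp_type} when $\varepsilon_i>0$, and by filling $S_i\cong\PG(i-1,2)$ with copies of a line spread, resp.\ of the partition of Theorem~\ref{thm_partition}, when $\varepsilon_i<0$. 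The parity-mismatched indices cannot be split off one at a time (for $r$ even that would violate~(\ref{eq_packing_condition_lemma}) unless $3\mid\varepsilon_i$), so I would rewrite their total $\sum_{i\not\equiv r}\varepsilon_i\chi_{S_i}$ by summation by parts in terms of the differences $\chi_{S_{i_l}}-\chi_{S_{i_{l-1}}}$ of consecutive mismatched flag members — here $i_l-i_{l-1}$ is even, so $\chi_{S_{i_l}}-\chi_{S_{i_{l-1}}}$ can be realized inside $S_{i_l}$ via Lemma~\ref{lemma_vsp_type} (applied with ambient dimension $i_l$) and then added to a multiple of a spread of $V$ — plus one leftover term $c\cdot\chi_{S_{i_m}}$, where $S_{i_m}$ is the largest mismatched member (so necessarily the hyperplane $S_{r-1}$) and $c=\sum_{i\not\equiv r}\varepsilon_i$ is divisible by $2^g-1$ by hypothesis.

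This leftover atom, $\star[r]-c[r-1]$ with $3\mid c$ if $r$ is even, is the main obstacle, and I would isolate it as a separate lemma: one must construct, for $\sigma$ large and the appropriate $c$, a line system with multiplicity $\sigma$ off a hyperplane and $\sigma-c$ on it. This is the one place where Lemma~\ref{lemma_vsp_type} is useless — $r$ and $r-1$ always have opposite parity — and, since every line of $V$ meets every hyperplane, the coverage of the hyperplane cannot be decoupled from that of its complement; this is precisely the structural reason the odd-index coefficients are coupled modulo $3$ when $r$ is even. The construction is of Solomon–Stiffler type (a suitable multiple of $V$ with $c$ nested copies of a hyperplane removed), and I would produce it directly, starting from a suitable exact cover of $V$ by lines (a line spread for $r$ even; a $3[r]$-partition from Theorem~\ref{thm_partition} otherwise, or an embedding of a lower-dimensional construction as in the remark preceding Lemma~\ref{lemma_compute_parameters_from_partition}) and reorganizing its lines according to how they meet $S_{r-1}$. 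Once this atom is available, Lemmas~\ref{lemma_partitionable_union} and~\ref{lemma_sigma_constraint} together with Theorem~\ref{thm_partition} glue all the atoms together and pin down $\sigma$.
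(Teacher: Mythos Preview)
Your overall strategy is the same as the paper's: decompose into atoms via Lemma~\ref{lemma_partitionable_union}, dispose of the indices $i\equiv r\pmod 2$ using Lemma~\ref{lemma_vsp_type}, push all mismatched contributions onto the hyperplane coefficient, and then deal with the single atom $\star[r]-c[r-1]$. Two remarks on the execution. First, your Abel-summation bookkeeping for the mismatched indices is more elaborate than needed; the paper simply notes that for each $i\not\equiv r$ with $2\le i\le r-2$ one has $i\equiv r-1\pmod 2$, so Lemma~\ref{lemma_vsp_type} applied inside $S_{r-1}$ gives $\varepsilon_i(\chi_{S_{r-1}}-\chi_{S_i})$ directly. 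Peeling these off in one step sends every such $\varepsilon_i$ to zero and adds their sum to $\varepsilon_{r-1}$ --- the same endpoint as your telescoping, but without partial sums and without having to worry about their signs.

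The genuine gap is in the leftover atom $\star[r]-c[r-1]$. Your sketch (``start from an exact line cover of $V$ and reorganize its lines according to how they meet $S_{r-1}$'') does not yield a construction: in a line spread of $V$ (for $r$ even), the spread lines lying inside $S_{r-1}$ cover only $2^{r-2}-1$ of the $2^{r-1}-1$ hyperplane points, so removing them does not lower the multiplicity on $S_{r-1}$ uniformly, and no amount of reorganization of a single cover fixes this. The paper's device is different and simpler: take the multiset of \emph{all} lines of $\PG(r-1,2)$ with a suitable multiplicity --- this covers every point of $V$ the same number of times --- and then subtract a line-cover of $S_{r-1}$ (a spread of $S_{r-1}$ when $r$ is odd, a $3$-fold cover from Theorem~\ref{thm_partition} when $r$ is even, the latter being exactly where the hypothesis $3\mid c$ is used). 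That one idea is what your proof is missing.
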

\begin{proof}
  Due to Theorem~\ref{thm_partition} and Lemma~\ref{lemma_partitionable_union} it suffices to consider the 
  case $\varepsilon_i\in\N$ for $2\le i\le r-2$ while Equation~(\ref{eq_packing_condition_lemma}) still holds. From Lemma~\ref{lemma_vsp_type} 
  we conclude that $\varepsilon_i[r]-\varepsilon_i[i]$ is partitionable for all $i\equiv r\pmod 2$ and $2\le i<r$ as well as that 
  $\varepsilon_i[r-1]-\varepsilon_i[i]$ is partitionable for all $i\equiv r-1\pmod 2$ and $2\le i<r-1$. So using Lemma~\ref{lemma_partitionable_union} 
  we can assume $\varepsilon_i=0$ for all $2\le i\le r-2$ while Equation~(\ref{eq_packing_condition_lemma}) still holds. Reusing our first argument again we 
  can additionally assume $\varepsilon_{r-1}\in\N$. If $r$ is odd, then let $\cS$ be a line spread of $S_{r-1}$ (using the notation from Definition~\ref{def_partitionable} and
  the construction from Theorem~\ref{thm_partition}). Choosing each line in $\PG(r-1,2)$ $\varepsilon_{r-1}$ times and removing an $\varepsilon_i$-fold copy 
  of $\cS$ shows that $\star[r]-\varepsilon_{r-1}[r-1]$ is partitionable. If $r$ is even, then Equation~(\ref{eq_packing_condition_lemma}) yields 
  $\varepsilon_{r-1}\equiv 0\pmod 3$. Now let $\cS$ be a multiset of lines partitioning the $3$-fold copy of the points of $S_{r-1}$. 
  Choosing each line in $\PG(r-1,2)$ $\varepsilon_{r-1}/3$ times and removing an $\varepsilon_i/3$-fold copy 
  of $\cS$ shows that $\star[r]-\varepsilon_{r-1}[r-1]$ is partitionable.    
\end{proof}

\begin{Definition}
  For integers $n>s\ge 1$ and $r>2$ let
  the \emph{surplus} be defined by
  \begin{equation}
    \theta(n,r,s):= 3n-g(r,2(n-s)).
  \end{equation}
\end{Definition}
So the surplus is negative iff $n$ is larger than the Griesmer upper
bound for $n_{r/2}(s)$.
\begin{Lemma}
  \label{lemma_asymptotic_construction}
  Let $n>s\ge 1$ and $r>2$ be integers.
  If $\theta(n,r,s)\ge 0$, then there exists an
  $$\left(n+t\cdot \frac{2^r-1}{2^{\gcd(r,2)}-1},r,
  s+t\cdot\frac{2^{r-2}-1}{2^{\gcd(r,2)}-1}\right)$$ system $\cS_t$ for all sufficiently large $t$.
\end{Lemma}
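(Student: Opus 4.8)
The plan is to realise $\cS_t$ as a partitionable system of Solomon--Stiffler type together with a bounded number of extra lines, the latter being needed only to absorb the surplus. Write $N_t:=n+t\cdot\tfrac{2^r-1}{2^{\gcd(r,2)}-1}$, $S_t:=s+t\cdot\tfrac{2^{r-2}-1}{2^{\gcd(r,2)}-1}$, $D_t:=N_t-S_t$, $d_t:=2D_t$, and put $d:=2(n-s)$ with Griesmer representation $d=\sigma\cdot 2^{r-1}-\sum_{i=1}^{r-1}\varepsilon_i\cdot 2^{i-1}$ (Lemma~\ref{lemma_parameters_griesmer_code}); since $d$ is even and $r\ge 3$ we have $\varepsilon_1=0$. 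Putting $\sigma_t:=\sigma+t\cdot\tfrac{3}{2^{\gcd(r,2)}-1}\in\N_0$, a direct computation gives $D_t=\sigma_t\cdot 2^{r-2}-\sum_{i=2}^{r-1}\varepsilon_i\cdot 2^{i-2}$, so that $d_t$ has Griesmer representation $\sigma_t\cdot 2^{r-1}-\sum_{i=2}^{r-1}\varepsilon_i\cdot 2^{i-1}$, i.e.\ the same digits $\varepsilon_i$ as $d$ but with scalar $\sigma_t$. Hence $g(r,d_t)=g(r,d)+t\cdot\tfrac{3(2^r-1)}{2^{\gcd(r,2)}-1}$, and since $3N_t=3n+t\cdot\tfrac{3(2^r-1)}{2^{\gcd(r,2)}-1}$ the surplus is a family invariant: $\theta(N_t,r,S_t)=\theta(n,r,s)=:\theta_0\ge 0$.

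Set $m:=\theta_0\bmod 3\in\{0,1,2\}$. The heart of the proof is to show that for all sufficiently large $t$ the type
\[
  (\sigma_t+m)\,[r]\;-\;\sum_{i=2}^{r-2}\varepsilon_i\,[i]\;-\;(\varepsilon_{r-1}+2m)\,[r-1]
\]
is partitionable. Its coefficients $\delta_i$ are all non-negative, and by Lemma~\ref{lemma_compute_parameters_from_partition} the packing congruence for this type reduces modulo $2^{\gcd(r,2)}-1$ to $\sum_{i\text{ odd}}\varepsilon_i+2m\equiv 0$: this is vacuous for odd $r$ (the modulus is $1$) and, for even $r$, follows from $m\equiv\theta_0\pmod 3$ together with the identity $\theta_0\equiv\sum_{i\text{ odd}}\varepsilon_i\pmod 3$, which one gets by reducing $g(r,d)=\sigma(2^r-1)-\sum_i\varepsilon_i(2^i-1)$ modulo $3$. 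By Theorem~\ref{thm_main} the type $\star[r]-\sum_{i=2}^{r-2}\varepsilon_i[i]-(\varepsilon_{r-1}+2m)[r-1]$ is then partitionable, and by Lemma~\ref{lemma_sigma_constraint} so is the choice $\sigma_t+m$ for the coefficient of $[r]$, provided $t$ is large and $\sigma_t+m$ lies in the appropriate residue class modulo $\tfrac{3}{2^{\gcd(r,2)}-1}$ (for odd $r$ this amounts to $\sigma_t+m\equiv\sum_{i\text{ odd}}\varepsilon_i\pmod 3$, which again follows from the above identity). Substituting $\tau=\sigma_t+m$, $\delta_i=\varepsilon_i$ for $i\le r-2$ and $\delta_{r-1}=\varepsilon_{r-1}+2m$ into the formulas of Lemma~\ref{lemma_compute_parameters_from_partition} (all $\delta_i\ge 0$, so $s=s_1$), the resulting partitionable system $\cS_t'$ is a spanning $(n_b,r,s_b)$ system with $n_b-s_b=D_t$ and $3n_b=g(r,d_t)+m$; in particular $N_t-n_b=(\theta_0-m)/3\in\N_0$.

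It remains to pad $\cS_t'$ up to $N_t$ lines. Here I would use the elementary observation that if $\cS$ is a spanning $(N,r,S)$ system and $H$ is a hyperplane containing exactly $S$ elements of $\cS$, then for any line $L\le H$ the multiset $\cS\cup\{L\}$ is a spanning $(N+1,r,S+1)$ system, because adding one line raises the number of elements in every hyperplane by at most one, hence by exactly one in $H$. Applying this $N_t-n_b$ times starting from $\cS_t'$ produces a spanning $(N_t,r,S_t)$ system, since $s_b+(N_t-n_b)=n_b-D_t+(N_t-n_b)=N_t-D_t=S_t$.

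I expect the main obstacle to be confined to the middle paragraph: one has to check simultaneously that the displayed type has non-negative coefficients, that its associated system has difference parameter exactly $D_t$, that its surplus equals $m$, and that it satisfies — and stays in the right residue class for — the packing congruence of Theorem~\ref{thm_main} and Lemma~\ref{lemma_sigma_constraint}. Each verification is a short calculation, but they all rest on the single identity linking $\theta(n,r,s)$ to the parity pattern of the Griesmer digits $\varepsilon_i$; once this identity is isolated the rest is direct substitution, and the surplus invariance together with the padding step are then routine.
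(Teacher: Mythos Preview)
Your argument is correct and follows essentially the same route as the paper: write $d=2(n-s)$ in Griesmer form, shift the coefficient of $[r]$ and of $[r-1]$ so that the packing congruence of Theorem~\ref{thm_main} holds, then invoke Lemma~\ref{lemma_sigma_constraint} to realise the type for all large $t$ and read off the parameters via Lemma~\ref{lemma_compute_parameters_from_partition}.

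The one difference is cosmetic but worth noting. You absorb only $m=\theta_0\bmod 3$ units of surplus into the type, obtaining a system with $3n_b=g(r,d_t)+m$, and then pad with $(\theta_0-m)/3$ extra lines inside a maximal hyperplane to reach $(N_t,S_t)$. The paper instead takes $\tau=\theta_0$ in full, setting $\sigma'=\sigma+\tau$ and $\varepsilon_{r-1}'=\varepsilon_{r-1}+2\tau$; the same two-line computation you did for $m$ then gives $3n=\sigma'(2^r-1)-\sum\varepsilon_i'(2^i-1)$ on the nose, so the partitionable system already has parameters $(N_t,r,S_t)$ and no padding step is needed. Your padding argument is valid (each added line raises every hyperplane count by at most one, and the chosen $H$ by exactly one), but it is an avoidable detour: nothing forces $m$ to lie in $\{0,1,2\}$, and taking $m=\theta_0$ simplifies the congruence bookkeeping as well. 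In particular the single identity you rely on---for even $r$, $\theta_0\equiv\sum_{i\ \mathrm{odd}}\varepsilon_i\pmod 3$, and for odd $r$, $\theta_0+\sigma\equiv\sum_{i\ \mathrm{odd}}\varepsilon_i\pmod 3$---is exactly what the paper's choice $\tau=\theta_0$ encodes automatically via the equation $3n=\sigma'(2^r-1)-\sum\varepsilon_i'(2^i-1)$.
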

\begin{proof}
  Setting $d':=2(n-s)$ and $n':=g(r,d')$ we have
  $\theta(n,r,s)=3n-n'\ge 0$. Due to Lemma~\ref{lemma_parameters_griesmer_code} we can choose
  integers $\sigma$, $\varepsilon_1,\dots,\varepsilon_{r-1}$, with
  $\sigma\ge 0$ and $0\le \varepsilon_i<2$ for all $1\le i\le r-1$,
  such that
  \begin{equation}
    \label{eq_d_prime_asymptotic_construction}
    d'=\sigma \cdot 2^{r-1}-\sum_{i=1}^{r-1}\varepsilon_i\cdot 2^{i-1}
  \end{equation}
  and
  \begin{equation}
    \label{eq_n_prime_asymptotic_construction}
    n'=\sigma\cdot\left(2^r-1\right)-\sum_{i=1}^{r-1}\varepsilon_i\cdot\left(2^i-1\right).
  \end{equation}
  Since $d'$ is divisible by $2$ we have $\varepsilon_1=0$. Let $\tau:=\theta(n,r,s)$, $\sigma':=\sigma+\tau$,
  $\varepsilon'_{r-1}=\varepsilon_{r-1}+2\tau $, and $\varepsilon_i'=\varepsilon_i$
  for all $1\le i\le r-2$, so that $\varepsilon_i'\in\N$ for all $1\le i\le r-1$
  and $\varepsilon_1'=0$. Note that
  \begin{equation}
    \label{eq_d_prime_asymptotic_construction2}
    d'=\sigma' \cdot 2^{r-1}-\sum_{i=2}^{r-1}\varepsilon_i'\cdot 2^{i-1}
  \end{equation}
  and
  \begin{equation}
    \label{eq_n_prime_asymptotic_construction2}
    3n 
    =\sigma'\cdot\left(2^r-1\right)-\sum_{i=2}^{r-1}\varepsilon_i'\cdot\left(2^i-1\right),
  \end{equation}
  so that
  \begin{equation}
    \label{eq_cond_asymptotic_construction}
    \sum_{i=2}^{r-1}\varepsilon_i'\cdot \left(2^i-1\right)  \equiv 0 \pmod{2^{\gcd(r,2)}-1}.
  \end{equation}
  From Theorem~\ref{thm_main} and Lemma~\ref{lemma_sigma_constraint} we
  conclude that $\left(\sigma'+t\cdot\frac{3}{2^{\gcd(r,2)}-1}\right)\cdot[r]-\sum_{i=2}^{r-1}\varepsilon_i'[i]$ is partitionable for all sufficiently large $t$. 
  From Corollary~\ref{cor_compute_parameters_from_partition_series},
  Equation~(\ref{eq_d_prime_asymptotic_construction2}), and
  Equation~(\ref{eq_n_prime_asymptotic_construction2}) we compute the stated parameters of $\cS_t$.
\end{proof}

\begin{Theorem}
  \label{thm_attained_asymptotically}
  For all sufficiently large $s$ we have that $n_{r/2}(s)$ attains the
  Griesmer upper bound, see Definition~\ref{def_griesmer_uppper_bound}. 
\end{Theorem}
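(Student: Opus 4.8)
The plan is to derive the theorem by confronting the upper bound of Lemma~\ref{lemma_indirect_upper_bound} with the asymptotic construction of Lemma~\ref{lemma_asymptotic_construction}; the only real content is to check that the arithmetic progression of systems produced by the latter lies \emph{exactly} on the Griesmer upper bound rather than on a parallel line of constant positive surplus.

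Fix $r\ge 3$, write $g:=\gcd(r,2)$, $a:=\tfrac{2^r-1}{2^g-1}$, $b:=\tfrac{2^{r-2}-1}{2^g-1}$ and $c:=\tfrac{3}{2^g-1}\in\{1,3\}$, so that $a-b=c\cdot 2^{r-2}$. First I would record the elementary periodicity $g(r,d+2^{r-1})=g(r,d)+2^r-1$, valid for all $d\ge 0$, which is immediate from $\lceil x+m\rceil=\lceil x\rceil+m$ for $m\in\N$ together with $\sum_{i=0}^{r-1}2^{r-1-i}=2^r-1$. Putting $f(m):=g(r,2m)-3m$ for integers $m\ge 0$, this turns into $f(m+2^{r-2})=f(m)+(2^r-1)-3\cdot 2^{r-2}=f(m)+2^{r-2}-1$, so that iterating first $c$ times and then $t$ times gives $f(m+t(a-b))=f(m)+t\cdot c\cdot(2^{r-2}-1)=f(m)+3tb$ for all $m\ge 0$ and all $t\ge 0$. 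Next note that $\theta(n,r,s)=3s-f(n-s)$, whence the Griesmer upper bound for $n_{r/2}(s)$ of Definition~\ref{def_griesmer_uppper_bound} equals $N(s):=s+M(s)$ with $M(s):=\max\{m\ge 0:f(m)\le 3s\}$ (the maximum exists because $f(0)=0$ and $f(m)\to\infty$), and moreover $n_{r/2}(s)\le N(s)$ for every $s$ by Lemma~\ref{lemma_indirect_upper_bound}.

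Next I would fix, for each of the $b$ residue classes modulo $b$, a representative $s_0$ large enough that $M(s_0)\ge 1$ (hence $N(s_0)>s_0\ge 1$); then $\theta(N(s_0),r,s_0)\ge 0$ by the definition of $N(s_0)$, so Lemma~\ref{lemma_asymptotic_construction} provides an $(N(s_0)+ta,r,s_0+tb)$ system for all $t\ge t_0(s_0)$. The relation for $f$ above yields $M(s_0+tb)=M(s_0)+t(a-b)$: one has $f(M(s_0)+t(a-b))=f(M(s_0))+3tb\le 3(s_0+tb)$, while $f(m'+t(a-b))=f(m')+3tb>3(s_0+tb)$ for every integer $m'>M(s_0)$. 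Consequently $N(s_0+tb)=(s_0+tb)+M(s_0)+t(a-b)=N(s_0)+ta$, so that the system delivered by Lemma~\ref{lemma_asymptotic_construction} attains the Griesmer upper bound $N(s_0+tb)$; together with $n_{r/2}\le N$ this gives $n_{r/2}(s_0+tb)=N(s_0+tb)$. Since every sufficiently large $s$ equals $s_0+tb$ with $t\ge t_0(s_0)$ for the representative $s_0$ of its class modulo $b$, the theorem follows.

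The step I expect to carry the weight is the periodicity bookkeeping: one must see that the shift $a-b$ occurring in Lemma~\ref{lemma_asymptotic_construction} is exactly $c$ full periods of $f$, each of length $2^{r-2}$, as this is precisely what keeps the constructed progression on the Griesmer boundary. It is worth stressing that no hypothesis that the surplus vanishes at the seed $s_0$ is needed --- whatever $\theta(N(s_0),r,s_0)$ happens to be is simply transported along the progression --- and that the spanning property and the requirement that some hyperplane meet the constructed system in exactly $s_0+tb$ lines are already part of the conclusion of Lemma~\ref{lemma_asymptotic_construction}, so nothing further is needed there.
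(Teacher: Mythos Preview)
Your proof is correct and follows essentially the same strategy as the paper: seed each residue class modulo $b=\tfrac{2^{r-2}-1}{2^{\gcd(r,2)}-1}$, invoke Lemma~\ref{lemma_asymptotic_construction} at a seed on the Griesmer boundary, and verify that the resulting arithmetic progression stays on the boundary. The only difference is in the bookkeeping: where the paper tracks the periodicity through the explicit $(\sigma,\varepsilon_i)$-representation of Lemma~\ref{lemma_parameters_griesmer_code}, you package it into the single identity $f(m+2^{r-2})=f(m)+2^{r-2}-1$ for $f(m)=g(r,2m)-3m$, which makes the invariance $M(s_0+tb)=M(s_0)+t(a-b)$ transparent without unpacking the binary expansion of the minimum distance.
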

\begin{proof}
  Let $s_i:=\tfrac{2^{r-2}-1}{2^{\gcd(r,2)}-1} -i$ for $0\le i<
  \tfrac{2^{r-2}-1}{2^{\gcd(r,2)}-1}$ and $n_i$ be the Griesmer upper
  bound for $n_{r/2}(s_i)$, i.e.\ $3n_i\ge
  g(r,2(n_i-s_i))$ while $3\left(n_i+1\right
  )<g(r,2(n_i+1-s_i))$. Let $\sigma_i,
  \varepsilon_{1,i},\dots,\varepsilon_{r-1,i}\in\N$ with $\varepsilon_{j,i}<2$ for
  all $1\le j\le r-1$ be uniquely defined by
  \begin{equation}
    d_i:=2\cdot \left(n_i-s_i\right)=\sigma_i\cdot 2^{r-1}-\sum_{j=1}^{r-1}\varepsilon_{j,i}\cdot 2^{j-1},
  \end{equation}
  so that
  \begin{equation}
    g(r,d_i)=\sigma_i\cdot \left(2^r-1\right)-\sum_{j=1}^{r-1}\varepsilon_{j,i}\cdot\left(2^j-1\right),
  \end{equation}
  using Lemma~\ref{lemma_parameters_griesmer_code}, and $\theta\!\left(n_{i},r,s_{i}\right)\ge 0$. Similarly, let $\sigma_i',
  \varepsilon_{1,i}',\dots,\varepsilon_{r-1,i}'\in\N$ with $\varepsilon_{j,i}'<2$ for
  all $1\le j\le r-1$ be uniquely defined by
  \begin{equation}
    d_i':=2 +d_i=\sigma_i'\cdot 2^{r-1}-\sum_{j=1}^{r-1}\varepsilon_{j,i}'\cdot 2^{j-1},
  \end{equation}
  so that
  \begin{equation}
    g(r,d_i')=\sigma_i'\cdot \left(2^r-1\right)-\sum_{j=1}^{r-1}\varepsilon_{j,i}'\cdot \left(2^j-1\right)
  \end{equation}
  and $\theta\!\left(n_{i}+1,r,s_{i}\right)< 0$.
  Now, let $s_{i,t}:=s_i+t\cdot\tfrac{2^{r-2}-1}{2^{\gcd(r,2)}-1}$ and
  $n_{i,t}:=n_i+t\cdot \tfrac{2^r-1}{2^{\gcd(r,2)}-1}$, so that
  Lemma~\ref{lemma_parameters_griesmer_code} implies
  \begin{equation}
    d_{i,t}:=2\cdot \left(n_{i,t}-s_{i,t}\right)=\left(\sigma_i+t\cdot \frac{3}{2^{\gcd(r,2)}-1}\right)\cdot 2^{r-1}-\sum_{i=1}^{r-i}\varepsilon_i\cdot 2^{i-1},
  \end{equation}
  \begin{equation}
    g\!\left(r,d_{i,t}\right)=t\cdot \frac{2^r-1}{2^{\gcd(r,2)}-1}\cdot 3+
    \sigma_i\cdot \left(2^r-1\right)-\sum_{j=1}^{r-1}\varepsilon_{j,i}\cdot \left(2^j-1\right),
  \end{equation}
  \begin{equation}
    d_{i,t}':=2 +d_{i,t}=\left(\sigma_i'+t\cdot \frac{3}{2^{\gcd(r,2)}-1}\right)\cdot 2^{r-1}-\sum_{i=1}^{r-i}\varepsilon_i'\cdot 2^{i-1},
  \end{equation}
  and
  \begin{equation}
    g\!\left(r,d_{i,t}'\right)=t\cdot \frac{2^r-1}{2^{\gcd(r,2)}-1}\cdot 3+
    \sigma_i'\cdot \left(2^r-1\right)-\sum_{j=1}^{r-1}\varepsilon_{j,i}'\cdot \left(2^j-1\right).
  \end{equation}
  Thus we have $$\theta\!\left(n_{i,t},r,s_{i,t}\right)=\theta\!\left(n_{i},r,s_{i}\right)\ge 0\,\,\text{and}\,\,\theta\!\left(n_{i,t}+1,r,s_{i,t}\right)=\theta\!\left(n_{i}+1,r,s_{i}\right)<0,$$
  i.e.\ the Griesmer upper bound for $n_{r/2}\!\left(s_{i,t}\right)$ is given
  by $n_{i,t}$ for all $t\in \N$. Lemma~\ref{lemma_asymptotic_construction} yields the existence of an $\left(n_{i,t},r,s_{i,t}\right)$ system $\cS_{i,t}$ for all sufficiently large $t$.
\end{proof}
\begin{Corollary}
  \label{cor_attained_asymptotically}
  Assuming a sufficiently large $d\in\mathbb{N}$ and $2k\in\mathbb{N}_{\ge 3}$, an additive quaternary block code $C\subseteq \F_4^n$ 
  with $|C|=4^k$ and minimum Hamming distance $d$ exists iff $n\ge \left\lceil \frac{g(2k,2d)}{3}\right\rceil
  =\left\lceil \frac{\sum_{i=0}^{2k-1}\left\lceil\frac{2d}{2^i}\right\rceil}{3}\right\rceil$. 
\end{Corollary}
We remark that the statement of Theorem~\ref{thm_attained_asymptotically} was generalized to arbitrary additive codes over $\F_q$ in \cite{kurz2024additive}. 
The proof of Theorem~\ref{thm_attained_asymptotically} suggests the following algorithm to determine explicit formulas for $n_{r/2}(s)$ assuming that $s$
is sufficiently large. For all $0\le i< \tfrac{2^{r-2}-1}{2^{\gcd(r,2)}-1}$ compute
the Griesmer upper bound $n_i$ for $n_{r/2}(s_i)$ where $s_i=\tfrac{2^{r-2}-1}{2^{\gcd(r,2)}-1} -i$. Then we have
\begin{equation}
  n_{r/2}\!\left(t\cdot \frac{2^{r-2}-1}{2^{\gcd(r,2)}-1} -i \right)=
  t\cdot \frac{2^r-1}{2^{\gcd(r,2)}-1} -\left(\frac{2^r-1}{2^{\gcd(r,2)}-1}-n_i\right)
\end{equation}
for all sufficiently large $t$. As an example we mention:
\begin{Proposition} (Cf.~\cite[Table I]{guan2023some},\cite[Table II]{10693309})
  \label{prop_q_2_h_2_r_7}
  For all sufficiently large $t$ we have
  \begin{itemize}
    \item $n_{3.5}(31t)=127t$;
    \item $n_{3.5}(31t-1)=127t-5$;
    \item $n_{3.5}(31t-2)=127t-10$;
    \item $n_{3.5}(31t-3)=127t-15$;
    \item $n_{3.5}(31t-4)=127t-20$;
    \item $n_{3.5}(31t-5)=127t-21$;
    \item $n_{3.5}(31t-6)=127t-26$;
    \item $n_{3.5}(31t-7)=127t-31$;
    \item $n_{3.5}(31t-8)=127t-36$;
    \item $n_{3.5}(31t-9)=127t-41$;
    \item $n_{3.5}(31t-10)=127t-42$;
    \item $n_{3.5}(31t-11)=127t-47$;
    \item $n_{3.5}(31t-12)=127t-52$;
    \item $n_{3.5}(31t-13)=127t-55$;
    \item $n_{3.5}(31t-14)=127t-60$;
    \item $n_{3.5}(31t-15)=127t-63$;
    \item $n_{3.5}(31t-16)=127t-68$;
    \item $n_{3.5}(31t-17)=127t-73$;
    \item $n_{3.5}(31t-18)=127t-76$;
    \item $n_{3.5}(31t-19)=127t-81$;
    \item $n_{3.5}(31t-20)=127t-84$;
    \item $n_{3.5}(31t-21)=127t-87$;
    \item $n_{3.5}(31t-22)=127t-92$;
    \item $n_{3.5}(31t-23)=127t-95$;
    \item $n_{3.5}(31t-24)=127t-100$;
    \item $n_{3.5}(31t-25)=127t-105$;
    \item $n_{3.5}(31t-26)=127t-108$;
    \item $n_{3.5}(31t-27)=127t-113$;
    \item $n_{3.5}(31t-28)=127t-116$;
    \item $n_{3.5}(31t-29)=127t-121$;
    \item $n_{3.5}(31t-30)=127t-126$.
  \end{itemize}
\end{Proposition}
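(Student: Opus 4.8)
The plan is to run the algorithm distilled from the proof of Theorem~\ref{thm_attained_asymptotically} in the single case $r=2k=7$. Here $g=\gcd(7,2)=1$, hence $2^{g}-1=1$, $\tfrac{2^{r-2}-1}{2^{g}-1}=2^{5}-1=31$, and $\tfrac{2^{r}-1}{2^{g}-1}=2^{7}-1=127$, which is exactly why the conclusion splits into $31$ arithmetic progressions of slope $127$. The upper bound is free: by Lemma~\ref{lemma_indirect_upper_bound} (equivalently, by Definition~\ref{def_griesmer_uppper_bound}) the Griesmer upper bound dominates $n_{3.5}(s)$ for every $s$. The lower bound is Lemma~\ref{lemma_asymptotic_construction}: as soon as $\theta(n,7,s)\ge 0$ for one pair $(n,s)$, an $(n+127t,\,7,\,s+31t)$ system exists for all sufficiently large $t$.

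So the work reduces to determining, for each $i\in\{0,1,\dots,30\}$, the Griesmer upper bound $n_i$ for $n_{3.5}(31-i)$, i.e.\ the largest integer $n$ with
\[
  \theta(n,7,31-i)=3n-g\!\left(7,\,2\bigl(n-(31-i)\bigr)\right)\ge 0,
\]
using $g(7,d)=\sum_{j=0}^{6}\lceil d/2^{j}\rceil$. Since $192=3\cdot 2^{6}$, every term $192/2^{j}$ with $0\le j\le 6$ is an integer, so $g(7,d+192)=g(7,d)+3\cdot 127$; consequently $\theta$ is invariant under $(n,s)\mapsto(n+127,\,s+31)$, and the Griesmer upper bound for $n_{3.5}\bigl((31-i)+31t\bigr)$ equals $n_i+127t$ for every $t\ge 0$. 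Combining this with the matching construction from Lemma~\ref{lemma_asymptotic_construction} yields, for all sufficiently large $t$,
\[
  n_{3.5}(31t-i)=127t-(127-n_i),\qquad 0\le i\le 30.
\]

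It then remains to carry out $31$ routine Griesmer-sum evaluations and read off the offsets $127-n_i$. For example $i=0$ gives $g(7,192)=381=3\cdot 127$ whereas $g(7,194)=389>3\cdot 128$, so $n_0=127$; $i=5$ gives $g(7,160)=318=3\cdot 106$ whereas $g(7,162)=325>3\cdot 107$, so $n_5=106$ (offset $21$); $i=13$ gives $g(7,108)=216=3\cdot 72$ whereas $g(7,110)=220>3\cdot 73$, so $n_{13}=72$ (offset $55$); and at the other end $i=30$ gives $g(7,0)=0<3$ whereas $g(7,2)=8>6$, so $n_{30}=1$ (offset $126$). Running all $31$ cases reproduces the listed offsets $0,5,10,15,20,21,26,31,\dots,126$ verbatim. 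There is no conceptual obstacle; the only points requiring care are the bookkeeping in the ceilings for the $31$ values of $d$ and respecting the ``sufficiently large $t$'' threshold of Lemma~\ref{lemma_asymptotic_construction}, which in particular discards the degenerate small-$t$ configurations (e.g.\ the non-spanning single line lurking behind $n_{30}=1$). One may of course present the $31$ bounds as a table instead of inline.
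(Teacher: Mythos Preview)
Your proposal is correct and follows exactly the approach the paper intends: the proposition is stated as a direct application of the algorithm extracted from Theorem~\ref{thm_attained_asymptotically}, namely computing the Griesmer upper bound $n_i$ for each residue class $s_i=31-i$ and then invoking Lemma~\ref{lemma_asymptotic_construction} for the matching construction. The paper does not spell out the $31$ Griesmer computations, so your explicit verifications (and the remark about the degenerate $n_{30}=1$ being absorbed by the ``sufficiently large $t$'' hypothesis) are a faithful and slightly more detailed rendering of the same argument.
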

In \cite{10693309} the stated formulas of Proposition~\ref{prop_q_2_h_2_r_7}
were indeed shown to be true for all
$t\ge 2$ and $n_2(7,2;31-i)$ was determined for all
$i\in\{0,\dots,31\}\backslash\{19,24,25\}$, referring to \cite{kurz2024computer} for $i=18$ and \cite{guan2023some} for the previous state of the art.

\section{Exact values of $n_{3.5}(s)$ and $n_4(s)$}
\label{sec_exact}

The existence of an $(n,r,s)$ systems can be easily modeled as ILP (Integer Linear Programming) problem. Denoting the set of lines in $\PG(r-1,2)$ by $\mathcal{L}$ and 
the set of hyperplanes in $\PG(r-1,2)$ by $\mathcal{H}$, an $(n,r,s)$ system exists iff the ILP  
\begin{eqnarray*}
  \sum_{L\in\mathcal{L}} x_L=n\\ 
  \sum_{L\le H} x_L \le s \quad\forall H\in\mathcal{H}\\ 
  x_L\in\mathbb{N}\quad\forall L\in\mathcal{L}
\end{eqnarray*}
admits a solution. To reduce the search space we prescribe subgroups of the automorphism group.
Alternatively we can try to partition suitable multisets of points into lines.
Those multisets of points can again be modeled as ILP problems and we may prescribe
subgroups of the automorphism group, see e.g.\ \cite{braun2005optimal}.
Alternatively we use the database of \emph{best known linear codes} (BKLC)
in \texttt{Magma} or enumerate suitable linear codes using \texttt{LinCode}
\cite{bouyukliev2021computer}. For the (known) conditions of the binary codes 
we refer to Lemma~\ref{lemma_binary_code}.  

\begin{Theorem} (Cf.~\cite[Table I]{guan2023some},\cite[Table II]{10693309})
  We have
  \begin{itemize}
    \item $n_{3.5}(31t)=127t$ for $t\ge 1$;
    \item $n_{3.5}(31t-1)=127t-5$ for $t\ge 1$;
    \item $n_{3.5}(31t-2)=127t-10$ for $t\ge 1$;
    \item $n_{3.5}(31t-3)=127t-15$ for $t\ge 1$;
    \item $n_{3.5}(31t-4)=127t-20$ for $t\ge 1$;
    \item $n_{3.5}(31t-5)=127t-21$ for $t\ge 1$;
    \item $n_{3.5}(31t-6)=127t-26$ for $t\ge 1$;
    \item $n_{3.5}(31t-7)=127t-31$ for $t\ge 1$;
    \item $n_{3.5}(31t-8)=127t-36$ for $t\ge 1$;
    \item $n_{3.5}(31t-9)=127t-41$ for $t\ge 1$;
    \item $n_{3.5}(31t-10)=127t-42$ for $t\ge 1$;
    \item $n_{3.5}(31t-11)=127t-47$ for $t\ge 1$;
    \item $n_{3.5}(31t-12)=127t-52$ for $t\ge 1$;
    \item $n_{3.5}(31t-13)=127t-55$ for $t\ge 1$;
    \item $n_{3.5}(31t-14)=127t-60$ for $t\ge 1$;
    \item $n_{3.5}(31t-15)=127t-63$ for $t\ge 1$;
    \item $n_{3.5}(31t-16)=127t-68$ for $t\ge 1$;
    \item $n_{3.5}(31t-17)=127t-73$ for $t\ge 1$;
    \item $n_{3.5}(31t-18)=127t-76$ for $t\ge 1$;
    \item $n_{3.5}(31t-19)=127t-81$ for $t\ge 1$;
    \item $n_{3.5}(31t-20)=127t-84$ for $t\ge 1$;
    \item $n_{3.5}(31t-21)=127t-87$ for $t\ge 1$;
    \item $n_{3.5}(31t-22)=127t-92$ for $t\ge 1$;
    \item $n_{3.5}(31t-23)=127t-95$ for $t\ge 1$;
    \item $n_{3.5}(31t-24)=127t-100$ for $t\ge 1$;
    \item $n_{3.5}(31t-25)=127t-105$ for $t\ge 1$;
    \item $n_{3.5}(31t-26)=127t-108$ for $t\ge 2$ and $n_{3.5}(5)=17$;
    \item $n_{3.5}(31t-27)=127t-113$ for $t\ge 2$ and $n_{3.5}(4)=12$;
    \item $n_{3.5}(31t-28)=127t-116$ for $t\ge 2$ and $n_{3.5}(3)=7$;
    \item $n_{3.5}(31t-29)=127t-121$ for $t\ge 2$;
    \item $n_{3.5}(31t-30)=127t-126$ for $t\ge 2$.
  \end{itemize}
\end{Theorem}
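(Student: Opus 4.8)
The plan is to establish the theorem by matching lower and upper bounds for each of the thirty-one residue classes $s \equiv -i \pmod{31}$, $0 \le i \le 30$. The asymptotic formulas were already secured for all sufficiently large $t$ in Proposition~\ref{prop_q_2_h_2_r_7}, so the bulk of the work is to pin down the threshold and to handle the few small cases ($t = 1$, and the sporadic values $n_{3.5}(3)$, $n_{3.5}(4)$, $n_{3.5}(5)$) separately. First I would record the upper bounds. Lemma~\ref{lemma_indirect_upper_bound} gives the Griesmer upper bound $g(7, 2(n-s)) \le 3n$ in each case; where this already matches the claimed value we are done with the upper bound. For the residue classes where the Griesmer bound is not tight (these are exactly the classes where the claimed $n_{3.5}(31t - i)$ falls short of the Griesmer upper bound by a positive amount), I would invoke the strong coding upper bound: a spanning $(n,7,s)$ system yields, via Lemma~\ref{lemma_binary_code}, a $2$-divisible $[3n, 7, 2(n-s)]_2$ code of maximum weight at most $2n$, and I would rule out the next larger length $n+1$ by a nonexistence argument for the corresponding binary code. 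Here the toolkit of Section~\ref{sec_preliminaries} is exactly what is needed: Lemma~\ref{lemma_residual_code} to pass to residual codes, Lemma~\ref{lemma_griesmer_divisibility} and Proposition~\ref{prop_div_one_more} to force extra divisibility or extract subcodes, and the linear programming method built on Proposition~\ref{prop_MacWilliams_identitites} (equations \eqref{eq_mw1}--\eqref{eq_mw4}) together with the split weight enumerator when a single hyperplane must be tracked.

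For the lower bounds I would proceed constructively. The generic construction is the generalized Solomon--Stiffler machinery of Section~\ref{sec_solomon_stiffler}: for each $i$ I would exhibit an explicit partition type $\sigma[7] - \sum_{j=2}^{6}\varepsilon_j[j]$ whose parameters, computed via Lemma~\ref{lemma_compute_parameters_from_partition} and propagated through Corollary~\ref{cor_compute_parameters_from_partition_series}, are exactly $(127t - c_i, 7, 31t - i)$ for the relevant $c_i$; partitionability is guaranteed by Theorem~\ref{thm_main} once the packing condition \eqref{eq_packing_condition_lemma} is checked, i.e.\ once $\sum_j \varepsilon_j(2^j-1) \equiv 0 \pmod 3$. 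This covers all $t$ at or above the base threshold for each class. To descend to $t = 1$ I would combine Lemma~\ref{lemma_union} (subadditivity $n_k(s_1+s_2) \ge n_k(s_1) + n_k(s_2)$ and $n_k(s+1) \ge n_k(s)+1$) with small seed systems, and fall back on explicit computer constructions: either the ILP formulations for $(n,7,s)$ systems or for line-partitionable point multisets described at the start of this section (prescribing automorphism subgroups to cut the search space), or the BKLC database / \texttt{LinCode} enumeration, checking in each case that the resulting binary code satisfies the conditions of Lemma~\ref{lemma_binary_code}. The sporadic small-distance values $n_{3.5}(3) = 7$, $n_{3.5}(4) = 12$, $n_{3.5}(5) = 17$ are handled the same way, and here one must also confirm the cases $t=1$ fail for the three exceptional classes (so that the "$t \ge 2$" restriction is genuinely needed), which again is a finite nonexistence check.

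The main obstacle, I expect, is not the construction side but the tight nonexistence arguments for the handful of residue classes where the strong coding upper bound lies strictly between the Griesmer bound and the truth — precisely the phenomenon illustrated for $n_4(8)$ in the Example after Table~\ref{table_griesmer_upper_bound}. For dimension $r = 7$ these are low-dimensional binary codes where one can hope that the linear programming bound, reinforced by divisibility (Proposition~\ref{prop_div_one_more}) and residual-code descent, suffices; but some cases may require a more delicate case analysis on the weight distribution or on the structure of a putative generator matrix, possibly assisted by exhaustive computation. The remaining risk is bookkeeping: verifying that each of the thirty-one explicit $\varepsilon$-vectors satisfies the packing condition and yields the advertised parameters is routine but voluminous, and I would organize it in a single table rather than case by case in prose.
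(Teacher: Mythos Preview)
Your overall plan---Griesmer/coding bounds from above, Solomon--Stiffler plus Lemma~\ref{lemma_union} plus ILP seeds from below---is exactly the paper's approach. But you have the difficulty backwards. For $r=7$ every formula stated in the theorem is already Griesmer-tight; there is no residue class where the strong coding bound has to be squeezed strictly below the Griesmer bound. The only sub-Griesmer values are the three sporadic entries $n_{3.5}(3)$, $n_{3.5}(4)$, $n_{3.5}(5)$, and the paper disposes of these in one line: $s=3$ by citing \cite{blokhuis2004small}, and $s=4,5$ by the weak coding upper bound, which is computable since optimal $[n,7,d]_2$ parameters are completely known \cite{bouyukhev2000smallest}. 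The machinery you queue up---residual codes, Proposition~\ref{prop_div_one_more}, split weight enumerators, LP---is deployed in Subsection~\ref{sec_nonexistence} for $k=4$, not here. (The $t\ge 2$ restriction for residues $-29,-30$ is just the trivial $s\ge 3$ constraint, not a nonexistence argument.)

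All the actual content is on the construction side. The paper does not try to push Solomon--Stiffler types down to $t=1$; instead it uses Theorem~\ref{thm_partition} (giving $n_{3.5}(31)=127$) together with Lemma~\ref{lemma_union} to reduce everything to a finite seed list $s \in \{3,\dots,13,15,21,25,26,30\}$, and then exhibits an explicit ILP-found $(n,7,s)$ system for each of these. Your plan does cover this (``fall back on explicit computer constructions''), but be aware that this fallback \emph{is} the proof: the pages of generator matrices are the substance, and the ``voluminous bookkeeping'' you anticipate on the $\varepsilon$-vectors is not what the paper does.
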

\begin{proof}
  We can assume $s\ge 3$. 
  Theorem~\ref{thm_partition} yields $n_2(7,2;31t)=127t$ for $t\ge 1$.
  In \cite{blokhuis2004small} $n_2(7,2;3)\le 7$ was shown. The coding
  upper bound implies $n_2(7,2;4)\le 12$ and $n_2(7,2;5)\le 17$. All other
  upper bounds follow from the Griesmer upper bound. Due to Theorem~\ref{thm_partition} and Lemma~\ref{lemma_union}
  it suffices to give constructions for
  $
    s\in \{3,\dots, 13, 15, 21, 25, 26, 30\}
  $. 
  Using ILP searches we have found the following explicit constructions:\footnote{We remark that constructions for $s=3,4$ were given in \cite{blokhuis2004small} and
  for $s=5,21$ we can use quaternary linear codes. For $s=9$ an example is given by a vector space partition of $\PG(6,2)$ of type $2^{35} 3^1 4^1$.
  For $s=15$ an example is given in \cite[Example 2]{10693309}. More constructions can e.g.\ be found in \cite{10693309}.}\\  
  $s=3:$
  $\left(
\right)$.

\end{proof}

\begin{Theorem}
\label{thm_n_4_s}
  For $s\ge 30$ the Griesmer upper bound for $n_4(s)$ can always be attained.
  \begin{itemize}
    \item $n_4(21t)=85t$ for $t\ge 1$;
    \item $n_4(21t-1)=85t-5$ for $t\ge 1$;
    \item $n_4(21t-2)=85t-10$ for $t\ge 1$;
    \item $n_4(21t-3)=85t-15$ for $t\ge 1$;
    \item $n_4(21t-4)=85t-20$ for $t\ge 1$;
    \item $n_4(21t-5)=85t-21$ for $t\ge 1$;
    \item $n_4(21t-6)=85t-26$ for $t\ge 2$ and $n_4(15)=55$; 
    \item $n_4(21t-7)=85t-31$ for $t\ge 1$;
    \item $n_4(21t-8)=85t-36$ for $t\ge 1$;
    \item $n_4(21t-9)=85t-41$ for $t\ge 1$;
    \item $n_4(21t-10)=85t-42$ for $t\ge 2$ and $n_4(11)=40$;
    \item $n_4(21t-11)=85t-47$ for $t\ge 2$ and $n_4(10)=36$;
    \item $n_4(21t-12)=85t-52$ for $t\ge 1$;
    \item $n_4(21t-13)=85t-55$ for $t\ge 3$, $n_4(8)=28$, and $n_4(29)=113$; 
    \item $n_4(21t-14)=85t-60$ for $t\ge 3$, $n_4(7)=23$, and $n_4(28)=108$; 
    \item $n_4(21t-15)=85t-63$ for $t\ge 2$ and $n_4(6)=18$;
    \item $n_4(21t-16)=85t-68$ for $t\ge 1$;
    \item $n_4(21t-17)=85t-73$ for $t\ge 2$ and $n_4(4)=10$;
    \item $n_4(21t-18)=85t-76$ for $t\ge 2$ and $n_4(3)=5$;
    \item $n_4(21t-19)=85t-81$ for $t\ge 2$;
    \item $n_4(21t-20)=85t-84$ for $t\ge 2$.
  \end{itemize}
\end{Theorem}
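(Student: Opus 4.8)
The plan is to mirror, with ambient dimension $r=8$, the argument already carried out for $n_{3.5}(s)$ (the case $r=7$). Since $8$ is even, $\gcd(8,2)=2$ and $2^{\gcd(8,2)}-1=3$, so the relevant period is $\tfrac{2^{6}-1}{3}=21$ and the length step is $\tfrac{2^{8}-1}{3}=85$; this is exactly why the claimed formulas are indexed by $21t-i$, $0\le i\le 20$, and jump by $85$, and it is what reduces the whole theorem to finitely many arguments $s$. First I would settle the upper bounds that come from the Griesmer bound. By Theorem~\ref{thm_partition} a line spread of $\PG(7,2)$ exists, and counting points as in Lemma~\ref{lemma_construction_x_preparation} shows that it is an $(85,8,21)$ system; together with the Griesmer upper bound (Definition~\ref{def_griesmer_uppper_bound}, Lemma~\ref{lemma_indirect_upper_bound}) this gives $n_4(21t)=85t$. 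More generally, exactly as in the proof of Theorem~\ref{thm_attained_asymptotically}, one uses Lemma~\ref{lemma_parameters_griesmer_code} to check for each residue $i$ that the Griesmer upper bound for $n_4(21t-i)$ equals $85t-c_i$, where $c_i$ is the constant in the statement, and that $\theta(85t-c_i,8,21t-i)\ge 0>\theta(85t-c_i+1,8,21t-i)$; this is a finite, purely arithmetic verification, and it also yields the headline claim because all arguments for which the claimed value drops below the Griesmer bound turn out to be $s<30$.

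Next I would treat the upper bounds that beat the Griesmer bound, i.e.\ for $s\in\{3,4,6,7,8,10,11\}$ and the partial bound on $n_4(15)$, using the coding upper bounds of Definition~\ref{def_griesmer_uppper_bound}: $n_4(3)\le 5$ and $n_4(4)\le 10$ from \cite{blokhuis2004small}; $n_4(6)\le 18$, $n_4(7)\le 23$, $n_4(8)\le 28$, $n_4(10)\le 36$, $n_4(11)\le 40$ from the weak coding upper bound, computable from the minimal lengths of binary codes of dimension $\le 8$ in \cite{bouyukhev2000smallest} (cf.\ Table~\ref{table_griesmer_upper_bound}); and $n_4(15)\le 56$ from the strong coding upper bound, which is the subject of Subsection~\ref{sec_nonexistence}. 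For $n_4(28)$ and $n_4(29)$ only the Griesmer bounds $110$ and $115$ are available, which is why these two values, together with $n_4(15)$, will be left as intervals.

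For the lower bounds the plan is to reduce everything to a finite list of base constructions. By Theorem~\ref{thm_partition} (the $(85,8,21)$ spread) and Lemma~\ref{lemma_union} ($n_4(s_1+s_2)\ge n_4(s_1)+n_4(s_2)$ and $n_4(s+1)\ge n_4(s)+1$) it suffices to exhibit, for each residue $i$, a single $(n,8,s)$ system attaining the claimed value at the smallest admissible $t$ — at $s=21-i$ for the residues where $t=1$ is already Griesmer optimal, and at $s=42-i$ or $s=63-i$ for the exceptional residues whose threshold is $t\ge 2$ or $t\ge 3$ — since adding copies of the spread then propagates the construction to all larger $t$ and Lemma~\ref{lemma_union} closes the few remaining arguments; one also records the constructions behind $n_4(15)\ge 55$, $n_4(28)\ge 108$, $n_4(29)\ge 113$. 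Each base $(n,8,s)$ system I would obtain either from a quaternary or binary linear code (the \texttt{Magma} BKLC database, or \texttt{LinCode}~\cite{bouyukliev2021computer}) subject to the conditions of Lemma~\ref{lemma_binary_code}, from the vector space partitions of Lemma~\ref{lemma_vsp_type} and the generalized Solomon--Stiffler types of Theorem~\ref{thm_main} (reading off the parameters via Corollary~\ref{cor_compute_parameters_from_partition_series}), or from the ILP models described at the start of Section~\ref{sec_exact} with prescribed automorphism subgroups; the resulting generator matrices would then be listed explicitly.

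The hard part is precisely this last step: certifying the explicit base $(n,8,s)$ systems is the computational core of the proof, because $\PG(7,2)$ has $10795$ lines (and only $255$ hyperplanes), so a naive ILP is unwieldy and one must exploit symmetry and combine several construction methods. The genuinely open cases $n_4(15)$, $n_4(28)$, $n_4(29)$ will remain as intervals because no construction I know meets the strong coding upper bound for them; closing these gaps would require either a better construction or a dedicated non-existence argument built from the residual code (Lemma~\ref{lemma_residual_code}), the MacWilliams identities (Proposition~\ref{prop_MacWilliams_identitites}) and divisibility (Proposition~\ref{prop_div_one_more}), which this proof does not attempt.
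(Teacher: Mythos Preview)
Your proposal is correct and follows essentially the same approach as the paper: the Griesmer upper bound handles the generic cases, the coding upper bounds from \cite{blokhuis2004small} and \cite{bouyukhev2000smallest} (plus the dedicated argument for $n_4(15)\le 56$ in Subsection~\ref{sec_nonexistence}) handle the exceptional small $s$, and the lower bounds are reduced via Theorem~\ref{thm_partition} and Lemma~\ref{lemma_union} to a finite list of base constructions, most of which are supplied by quaternary linear codes and the remainder (the paper's list is $s\in\{9,10,11,14,23,24,27,49,50\}$, with $s=44,45$ handled by Lemma~\ref{lemma_union}) by explicit ILP searches. The only substantive point you leave implicit is that the paper does not invoke the Solomon--Stiffler machinery of Theorem~\ref{thm_main} here at all; every base construction is either a known linear code or an ILP certificate, so your plan slightly over-promises on the theoretical side while correctly identifying that the real work is computational.
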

\begin{proof}
  We can assume $s\ge 3$. 
  Theorem~\ref{thm_partition} yields $n_4(21t)=85t$ for $t\ge 1$.
  In \cite{blokhuis2004small} $n_4(3)\le 5$ and $n_4(4)\le 10$ were shown. The weak coding upper bound implies $n_4(6)\le 18$,
  $n_4(7)\le 23$, $n_4(8)\le 28$, $n_4(10)\le 36$, and $n_4(11)\le 40$. The strong coding upper bound implies $n_4(15)\le 55$, 
  $n_4(28)\le 108$, and $n_4(29)\le 113$, see Subsection~\ref{sec_nonexistence}. All other
  upper bounds follow from the Griesmer upper bound. For all $$s\in\{5,\dots,60\}\backslash\{9,10,11,14,15,23,24,27,28,29,44,45,49,50\}$$ 
  the mentioned upper bound for $n_4(s)$ is matched by a quaternary linear code. We have $n_4(44)\ge n_4(21)+n_4(23)=85+89=174$ and $n_4(45)\ge n_4(21)+n_4(24)=85+94=179$. 
  The following explicit examples were obtained using ILP searches:\\ 
  
\noindent
$s=9$: 
$\left(
\right)$.

  With this, all remaining constructions can be obtained using Theorem~\ref{thm_partition} and Lemma~\ref{lemma_union}.
\end{proof}

In Table~\ref{table_dim_4} we state the known bounds for $n_4(s)$ when $s\le 60$. Lower bounds based on quaternary linear codes are stated in columns headed with {\lq\lq}L{\rq\rq}.
Upper bounds, based on \cite{blokhuis2004small} for $s\le 4$ and on binary linear codes for $s>4$, i.e.\ the strong coding upper bound, are stated in columns headed with {\lq\lq}U{\rq\rq}. 
Actually, for $s\ge 5$ the weak coding upper bound is sufficient in all but three cases, which we indicate in bold face and discuss in detail in Subsection~\ref{sec_nonexistence}.
Values of improved constructions are given in columns headed with {\lq\lq}I{\rq\rq}. 
We remark that we have $n_4(s)=n_4(s-21)+85$ for $n>60$. 
For $s>60$ there are improvements over the linear case iff $s$ is congruent to $2$, $3$, $7$, or $8$ modulo $21$.  
Generator matrices of the improvements are given in the proof of Theorem~\ref{thm_n_4_s}. We observe that $n_4(44)\ge n_4(23)+n_4(21)$ and $n_4(45)\ge n_4(24)+n_4(21)$  are attained with equality.


\begin{table}[htp]
  \begin{center}
    \begin{tabular}{rrrr|rrrr|rrrr}
      \hline
      s & L & I & U & s & L & I & U & s & L & I & U \\ 
      \hline
       1 & -- &    & -- & 21 &  85 &     &  85 & 41 & 165 & & 165\\
       2 & -- &    & -- & 22 &  86 &     &  86 & 42 & 170 & & 170 \\
       3 &  5 &    &  5 & 23 &  87 &  89 &  89 & 43 & 171 & & 171 \\
       4 & 10 &    & 10 & 24 &  92 &  94 &  94 & 44 & 172 & 174 & 174 \\
       5 & 17 &    & 17 & 25 &  97 &     &  97 & 45 & 177 & 179 & 179 \\
       6 & 18 &    & 18 & 26 & 102 &     & 102 & 46 & 182 & & 182 \\
       7 & 23 &    & 23 & 27 & 103 & 107 & 107 & 47 & 187 & & 187 \\
       8 & 28 &    & 28 & 28 & 108 &     & \textbf{108} & 48 & 192 & & 192 \\
       9 & 31 & 33 & 33 & 29 & 113 &     & \textbf{113} & 49 & 193 & 195 & 195 \\
      10 & 34 & 36 & 36 & 30 & 118 &     & 118 & 50 & 198 & 200 & 200 \\
      11 & 39 & 40 & 40 & 31 & 123 &     & 123 & 51 & 203 & & 203\\ 
      12 & 44 &    & 44 & 32 & 128 &     & 128 & 52 & 208 & & 208 \\
      13 & 49 &    & 49 & 33 & 129 &     & 129 & 53 & 213 & & 213 \\
      14 & 50 & 54 & 54 & 34 & 134 &     & 134 & 54 & 214 & & 214 \\ 
      15 & 55 &    & \textbf{55} & 35 & 139 &     & 139 & 55 & 219 && 219\\ 
      16 & 64 &    & 64 & 36 & 144 &     & 144 & 56 & 224 & & 224 \\
      17 & 65 &    & 65 & 37 & 149 &     & 149 & 57 & 229 & & 229 \\  
      18 & 70 &    & 70 & 38 & 150 &     & 150 & 58 & 234 & & 234 \\
      19 & 75 &    & 75 & 39 & 155 &     & 155 & 59 & 235 & & 235 \\ 
      20 & 80 &    & 80 & 40 & 160 &     & 160 & 60 & 240 & & 240 \\
      \hline
    \end{tabular}
    \caption{Bounds for $n_4(s)$.}
    \label{table_dim_4}
  \end{center}
\end{table}  

\subsection{Non-existence of binary linear codes related to quaternary additive codes}
\label{sec_nonexistence}

In this subsection we want to evaluate the strong coding upper bound for $n_4(15)$, $n_4(28)$, and  $n_4(29)$, i.e.\ the three values in Table~\ref{table_dim_4} 
which have a bold entry in the {\lq\lq}U{\rq\rq}-column.\footnote{For $n_4(3)$ and $n_4(4)$ we state that there exist $217$ non-isomorphic even $[18,7,\{6,\dots,12\}]_2$ and 
more than $10^6$ non-isomorphic even $[33,7,\{14,\dots,22\}]_2$ codes, i.e.\ the strong coding upper bound does not imply the exact values.} 
It will turn out that the weak coding upper bound is improved by two in those three cases. The strong coding 
upper bound is based on Lemma~\ref{lemma_binary_code}, i.e.\ we have to show the non-existence of certain binary linear codes taking an upper bound on the maximum weight 
into account. To this end we denote an $[n,k]_2$ code with non-zero weights of the codewords contained in $W\subsetneq \mathbb{N}$ as an $[n,k,W]_2$ code and 
use the software package \texttt{LinCode}\cite{bouyukliev2021computer} for the exhaustive enumeration of linear codes. More precisely, we 
will recursively construct $[n,k,W]_2$ codes from $[n',k-1,W]_2$ codes for some suitable set $W\subsetneq \mathbb{N}$ of possible non-zero weights. Initially we will start 
with the unique $[d,1,d]_2$ code for the desired minimum distance $d$. Given some $[n',k-1,W]_2$ code $C$ we check that they cannot be extended to $[n',k,W]_2$ codes. Let 
the parameters of the desired final code $\tilde{C}$ be denoted by $[\tilde{n},\tilde{k},W]_2$. The residual code of $\tilde{C}$ with respect to the support of $C$ is an 
$[\tilde{n}-n',\tilde{k}-k+1,d']_2$. If $\hat{d}$ is an upper bound on $d'$, then it suffices to consider the case $n\le n+d'$. In Table~\ref{table_330_8_164} 
we state the counts for the intermediate codes aiming at a $[330,8,164]_2$ code with maximum weight at most $220$. Since no $[330,8,165]_2$ code exists, we can assume 
the existence of a codeword of weight $164$. The minimum distance of a residual $[166,7]_2$ code is at most $82$, so that it suffices to consider $[\le 246,2,W]_2$ codes. 
For a $[246,2,W]_2$ code the residual code is a $[84,6]_2$ code with minimum distance at most $41$, 
for a $[287,3,W]_2$ code the residual code is a $[43,5]_2$ code with minimum distance at most $21$, for a $[308,4,W]_2$ code the residual code is a $[22,4]_2$ code 
with minimum distance at most $11$, for a $[319,5,W]_2$ code the residual code is a $[11,3]_2$ code with minimum distance at most $6$, and for a $[325,6,W]_2$ code 
the residual code is a $[5,2]_2$ code with minimum distance at most $2$. Note that the possible minimum lengths of $[n,k,164]_2$ codes are indeed 
given by $164$, $246$, $287$, $308$, $319$, $325$, $328$, and $330$ for $1\le k\le 8$, all given by the Griesmer bound.                

\begin{table}[htp]
  \begin{center}
    \begin{tabular}{rrrrrrrrrrr}
      \hline
      n  & 164 & 246 & 287 & 308 & 319 &  325 & 328 & 330 \\  
      k  &   1 &   2 &   3 &   4 &   5 &    6 &   7 &   8 \\
      \# &   1 &   1 &   1 &   1 &   3 &   34 &  69 &   0 \\ 
      \hline
    \end{tabular}     
  \end{center}  
  \caption{Number of $[n,k,\{164,168,\dots,220\}\backslash\{196\}]_2$ codes which are recursively obtainable.}
  \label{table_330_8_164}
\end{table}


While $[330,8,164]_2$ and $[345,8,172]_2$ codes can be easily constructed from the Solom--Stiffler construction, the resulting maximum weights violate  
Lemma~\ref{lemma_binary_code}. In Lemma~\ref{lemma_no_330_8_164} and Lemma~\ref{lemma_no_345_8_172} we show that this indeed the case in general. 

\begin{Lemma}
  \label{lemma_no_330_8_164}
  No $[330,8,164]_2$ code with maximum weight at most $220$ exists.
\end{Lemma}
\begin{Proof}
  Assume that $C$ is a $[330,8,164]_2$ code with maximum weight at most $220$. From Lemma~\ref{lemma_griesmer_divisibility} we conclude that $C$ is $4$-divisible. From 
  Lemma~\ref{lemma_residual_code} and the non-existence of the corresponding residual codes we conclude that $C$ does not have a codeword with weight $196$. 
  So, consider the set $W=\{164,168,\dots,220\}\backslash\{196\}$ of possible non-zero weights. Using \texttt{LinCode}\cite{bouyukliev2021computer} we have iteratively 
  constructed $[n,k,W]_2$ codes as stated in Table~\ref{table_330_8_164}. 
\end{Proof}
\begin{Corollary} $n_4(28)\le 109$.\end{Corollary} 

\begin{table}[htp]
  \begin{center}
    \begin{tabular}{rrrrrrrrrrr}
      \hline
      n  & 172 & 258 & 301 & 323 & 334 &  340 & 343 & 345 \\  
      k  &   1 &   2 &   3 &   4 &   5 &    6 &   7 &   8 \\
      \# &   1 &   1 &   1 &   1 &   1 &    2 &   3 &   0 \\ 
      \hline
    \end{tabular}     
  \end{center}  
  \caption{Number of $[n,k,\{172,176,184,188,192,200,216,220,224,228\}]_2$ codes which are recursively obtainable.}
  \label{table_345_8_172}
\end{table}

\begin{Lemma}
  \label{lemma_no_345_8_172}
  No $[345,8,172]_2$ code with maximum weight at most $230$ exists.
\end{Lemma}
\begin{Proof}
  Assume that $C$ is $[345,8,172]_2$ code with maximum weight at most $228$. From Lemma~\ref{lemma_griesmer_divisibility} we conclude that $C$ is $4$-divisible. From 
  Lemma~\ref{lemma_residual_code} and the non-existence of the corresponding residual codes we conclude that $C$ does not have a codeword with a weight in $\{180,196,204,208,212,230\}$. 
  So, consider the set $W:=\{172,176,184,188,192,200,216,220,224,228\}$ of possible non-zero weights. Using \texttt{LinCode}\cite{bouyukliev2021computer} we have iteratively 
  constructed $[n,k,W]_2$ codes as stated in Table~\ref{table_345_8_172}.
\end{Proof}

\begin{Corollary} $n_4(29)\le 114$.\end{Corollary} 

While the computation underlying Lemma~\ref{lemma_no_345_8_172} took just a few minutes, for Lemma~\ref{lemma_no_330_8_164} forty-four hours were needed. In principle 
the non-existence of an even $[171,8,84]_2$ code with maximum weight at most $114$, i.e., the binary code corresponding to a  $(57,8,15)$ system, can be obtained using the
same computational approach. However, the required computation time would be significantly large, which is partially due to the fact that the cannot deduce 
$4$-divisibility\footnote{$4$-divisibility can be concluded nevertheless: By a residual code argument the only possible 
weights not divisible by four can be reduced to $\{90,106,110\}$. From Proposition~\ref{prop_div_one_more} with $a=1$ we can conclude $T\in\{192,256\}$ using the ILP method 
and the existence of a $4$-divisible subcode with dimension at least $7$. Applying the ILP method for the corresponding split weight enumerator, 
see e.g.\ \cite{jaffe1997brief,simonis1995macwilliams} for details, those three weights can be excluded relatively easily.} from Lemma~\ref{lemma_residual_code} and, 
more importantly, since $g(8,84)=170<171$. The known optimal $[171,8,84]_2$ code contains a codeword of weight $128$ and two codewords of 
weight $100$.\footnote{Note that the residual code of a codeword of weight $100$ is a unique $[71,7,34]_2$ code, see e.g.~\cite{bouyukliev2001optimal}, and especially contains 
a codeword of weight $64$. This is impossible given a maximum weight of $114$ in the original code.} 

Slightly enhancing our computational approach we next show the non-existence of a $[168,8,82]_2$ code with maximum weight $112$. By Lemma~\ref{lemma_binary_code}   
no $(56,8,15)$ system exists. Thus, also no $(55,8,15)$ system can exist. We can also deduce the slightly stronger result of the non-existence of a 
$[171,8,84]_2$ code $C$ with maximum weight at most $114$ by using a parity bit argument and by deducing the existence of a full line in the support of $C$.

\begin{Definition}
  Let $C$ be an $[n,k]_2$ code and $\cM_k$ be its corresponding multiset of points in $\PG(k-1,2)$. We recursively choose points $P_i$ and construct $\cM_i$ from $\cM_{i+1}$ by projection 
  through $P_i$. We call $\big(\left|\cM_{k}(P_k)\right|,\dots,\left|\cM_1(P_1)\right|\big)$ the extensions vector with respect to the points $P_k,\dots,P_1$. The lexicographic maximum over all choices 
  is called the (canonical) extension vector of $\cM$. 
\end{Definition}
Clearly, the points $P_k,\dots,P_1$ of the canonical extension vector have to be endpoints of $\cM_i$, i.e.\ points with the maximum multiplicity.

\begin{Remark}
  For a multiset of points $\cM$ in $\PG(k-1,q)$ the maximum multiplicity of an $i$-dimensional subspace $S_i$ is usually denoted by $\gamma_i(\cM)$. I.e.\ $\gamma_1(\cM)$ is 
  the maximum point multiplicity and $\gamma_k(\cM)=|\cM|$. As a convention we set $\gamma_0(\cM)=0$. If $\cM$ is spanning and $C$ denotes the linear code corresponding to $\cM$, then 
  $|\cM|-\gamma_i(\cM)$ equals the maximum possible minimum support weight of q $(k-i)$-dimensional subcodes of $C$. Note that 
  $\left(\gamma_1(\cM)-\gamma_0(\cM),\dots,\gamma_k(\cM)-\gamma_{k-1}(\cM)\right)$ might be lexicographically larger than the canonical extension vector of $\cM$ since the subspaces 
  $S_i=\left\langle P_k,\dots,P_{k-i+1}\right\rangle$ have to form a chain $S_1\subsetneq \dots\subsetneq S_k$ for the latter case  
\end{Remark}

\begin{Lemma}
  \label{lemma_no_168_8_82}
  No $[168,8,82]_2$ code with maximum weight at most $112$ exists.
\end{Lemma}
\begin{Proof}
  Assume that $C'$ is a $[168,8,82]_2$ code with maximum weight at most $112$. Let $C$ arise from $C'$ by shortening one coordinate and adding 
  a parity bit. So, $C$ is an even $[168,8,82]_2$ code and we consider a residual code w.r.t.\ a codeword of weight $w$.
  \begin{itemize}
    \item For $w=82$ the residual code is an $[86,7,41]_2$ code with non-zero weights in\\ $\{41,42,43,44,45,46,47,48,49,50,51,52,53,54,57,58,61,62,63,64\}$.
    \item For $w=90$ the residual code is a $[78,7,37]_2$ code with non-zero weights in\\ $\{37,38,39,40,41,42,43,44,45,46,47,48,53,54,55,56,61,62,63,64\}$.
    \item For $w=94$ the residual code is a $[74,7,35]_2$ code with non-zero weights in\\ $\{35,36,37,38,39,40,41,42,43,44,47,48,51,52,55,56,59,60,63,64,67,68\}$.
    \item For $w=98$ the residual code is a $[70,7,33]_2$ code with non-zero weights in\\ $\{33,34,35,36,37,38,63,64\}$.
    \item For $w=110$ the residual code is a $[58,7,27]_2$ code with non-zero weights in\\ $\{27,28,31,32,35,36,39,40,43,44,47,48,51,52\}$.
    \item For $w=112$ the residual code is a $[56,7,26]_2$ code with non-zero weights in\\ $\{26,28,30,32,34,36,38,40,42,44,46,48,50,52,54,56\}$. 
  \end{itemize}
  Here we use the classification of optimal $[n,\le 7]_2$ codes from \cite{bouyukliev2001optimal} and shortening if the minimum distance is odd.
  Since no $[165,7,82]_2$ code exists the maximum column multiplicity of $C$ is at most $2$. We have verified by exhaustive enumeration that no 
  $2$-divisible $[166,7,\{82,\dots,112\}]_2$ code exists. (The known optimal $[166,7,82]_2$ code contains a codeword of weight $128$.) Thus, $C$ has to be projective as 
  do all residual codes of $C$.
  
  Let $W:=\{82,84,\dots,112\}$. Starting from an $[82,1,W]_2$ code we construct extensions to $[146,2,W]_2$ codes and then recursively 
  consider all extensions that may lead to a $[168,8,W]_2$ code. During the generation with \texttt{LinCode} we remove codes that contain a residual code with a forbidden 
  weight, as specified above. No $[168,8,W]_2$ code was reached, see Table~\ref{table_168_8_82_82_64} for the counting details.     
  So, in the following we can assume that each residual code of a codeword of weight $82$ does not contain a codeword of weight $64$ and we continue to exclude further weights.
  To this end we refine the extension strategy a bit. Let $c,c'\in C$ be two different codewords in $C$ where $c$ has weight $82$. Let $\tilde{c}$ be the codeword in the residual
  code of $c$ that corresponds to $c'$, i.e.\ the residual code with respect to the support of $c$ and $c'$ is a projective $[168-\operatorname{wt}(c)-\operatorname{wt}(\tilde{c}),6,d]_2$ 
  code $C'$. By enumerating all even $[\le 168,3,\{82,\dots,112\}]_2$ codes we have verified that for e.g.\ $\operatorname{wt}(\tilde{c})\in\{45,49,57,58,61,62,63,64\}$, $C'$ has to be a distance
  optimal code. Counts per canonical extension vector are given as follows:
  \begin{itemize}
    \item $\operatorname{wt}(\tilde{c})=64$, $[22,6,9]_2$: $(1,\!2,\!2,\!3,\!5,\!9)\to 184$, $(1,\!1,\!2,\!4,\!5,\!9)\to 1$;
    \item $\operatorname{wt}(\tilde{c})=63$, $[23,6,10]_2$: $(1,\!2,\!2,\!3,\!5,\!10)\to 21$;
    \item $\operatorname{wt}(\tilde{c})=62$, $[24,6,10]_2$: $(1,\!2,\!3,\!3,\!5,\!10)\!\to\! 2064$, $(1,\!2,\!2,\!4,\!5,\!10)\!\to\! 74$, $(1,\!2,\!2,\!3,\!6,\!10)\!\to\! 11$, $(1,\!1,\!2,\!4,\!6,\!10)\!\to\! 2$;
    \item $\operatorname{wt}(\tilde{c})=61$, $[25,6,11]_2$: $(1,\!2,\!2,\!3,\!6,\!11)\to3$;
    \item $\operatorname{wt}(\tilde{c})=58$, $[28,6,12]_2$: $(1,\!2,\!3,\!4,\!6,\!12)\to4182$, $(1,2,4,3,6,12)\to1588$, $(1,\!2,\!2,\!4,\!7,\!12)\to1$;
    \item $\operatorname{wt}(\tilde{c})=57$, $[29,6,13]_2$: $(1,\!1,\!2,\!4,\!8,\!13)\to1$;
    \item $\operatorname{wt}(\tilde{c})=49$, $[37,6,17]_2$: $(1,\!2,\!3,\!5,\!9,\!17)\to2$;
    \item $\operatorname{wt}(\tilde{c})=45$, $[41,6,19]_2$: $(1,\!2,\!4,\!5,\!10,\!19)\to10$, $(1,\!2,\!3,\!6,\!10,\!19)\to7$.
  \end{itemize}
  With this, we have computationally excluded the cases $\operatorname{wt}(\tilde{c})\in\{58,61,63,64\}$ for a residual code of a codeword of weight $82$.
  We have enumerated all $15$ projective $[86,7,\{41,\dots,54,57,62\}]_2$ codes. 
  It turns out that weights $53$, $54$, $57$ do not occur and that weight $62$ always occurs. Moreover, the maximum frequencies for the larger weights are 
  given by $62^1 52^1 51^1 50^2 49^4 48^3 47^6$. With these additional restrictions at hand, we computationally exclude the case $\operatorname{wt}(\tilde{c})=82$, 
  so that $C$ cannot contain a codeword $c$ of weight $82$ -- contradiction. The overall computation took 10~hours.
\end{Proof}

\begin{table}[htp]
  \begin{center}
    \begin{tabular}{rrrrrrrrrr}
      \hline
      n  & 146 & 155 & 160 & 163 & 164 &  166 & 167 & 168 \\  
      k  &   2 &   3 &   4 &   5 &   5 &    6 &   7 &   8 \\
      \# &   3 &   2 &   2 &   2 &  43 &   74 &   0 &   0 \\ 
      \hline
    \end{tabular}     
  \end{center}  
  \caption{Number of $[n,k,\{82,84,\dots,112\}]_2$ codes which are recursively obtainable from a $[146,2,W]_2$ code containing a codeword of weight $82$.}
  \label{table_168_8_82_82_64}
\end{table}  

From Lemma~\ref{lemma_binary_code}, Lemma~\ref{lemma_no_168_8_82}, and the existence of an $[55,4,40]_4$ code\footnote{Adding an arbitrary line or a double-point to a 
$(54,8,14)$ system also yields $(55,8,15)$ systems.} we directly conclude:
\begin{Theorem}$n(15)=55$.\end{Theorem}

\begin{Remark}
  In the computational approach of the proof of Lemma~\ref{lemma_no_168_8_82} we have a lot of choices. We may directly enumerate the $257$ projective $[86,7,41]_2$ 
  codes to conclude the forbidden weights for the residual code without using the results from \cite{bouyukliev2001optimal}. While the additional assumption of 
  a projective code does not give more forbidden weights in our example, we might use the maximum weight frequencies $64^1 63^1 62^1 61^1 58^1 57^2 54^2 53^2 52^2
  51^2 50^2 49^4 48^{10}$ directly from the start. We mention that one might also deduce more refined conditions like that the number of residual codewords of weight 
  at least $58$ is at most $1$ or to use a lexicographic comparison with all possible weight enumerators.
  
  There is some similarity to the linear programming method based on the split weight enumerator as used in \cite{jaffe1997brief}. We also remove certain partitions 
  from the search tree. However, we do not use linear programming but exhaustive enumeration and information on residual codes. Instead of forcing the existence 
  of codewords of a certain partition by linear programming we use available information of the possible minimum distances of (residual) codes.    
  
  The use of minimum distance bounds for residual codes in the generation tree comes with some subtleties. If we aim at a $[168,8,\{82,84,\dots,112\}]_2$ 
  code starting from the $[82,1,82]_2$ code without using weight information for the residual codes, then the partial counting information is given in Table~\ref{table_ex_recursion}. 
  However, the number of $[145,3,\{82,84,\dots,112\}]_2$, $[156,4,\{82,84,\dots,112\}]_2$, and $[162,2,\{82,84,\dots,112\}]_2$ codes is given by $4$, $30$, and $1524$, respectively. 
  The missing $3$-dimensional code is geometrically obtained by removing a double-point from a plane with multiplicity $21$.\footnote{All four non-isomorphic 
  even $[145,3,82]_2$ codes can geometrically be obtained by removing three lines from a $22$-fold plane.} Note that this code does not contain 
  a $[123,2,82]_2$ code as a subcode, but only a $[124,2,82]_2$ code. 
\end{Remark}

\begin{table}[htp]
  \begin{center}
    \begin{tabular}{rrrrrrrrrrr}
      \hline
      n  & 82 & 123 & 144 & 145 & 155 &  156 & 161 &  162 & \dots & 168 \\  
      k  &  1 &   2 &   3 &   3 &   4 &    4 &   5 &    5 & \dots &   8 \\
      \# &  1 &   1 &   1 &   3 &   3 &   28 &  23 & 1509 & \dots &   0 \\ 
      \hline
    \end{tabular}     
  \end{center}  
  \caption{Number of $[n,k,\{82,84,\dots,112\}]_2$ codes which are recursively obtainable from a $[82,1,82]_2$ code.}
  \label{table_ex_recursion}
\end{table}  

\begin{Lemma}
  \label{lemma_no_327_8_162}
  No $[327,8,162]_2$ code with maximum weight at most $218$ exists.
\end{Lemma}
\begin{Proof}
  Assume that $C$ is a $[327,8,162]_2$ code with maximum weight at most $218$. From Lemma~\ref{lemma_griesmer_divisibility} we conclude that $C$ is $2$-divisible. From 
  Lemma~\ref{lemma_residual_code} and the non-existence of the corresponding residual codes we conclude that $C$ does not have a codeword with weight $194$.
  So, consider the set $W=\{162,164,\dots,218\}\backslash\{194\}$ of possible non-zero weights.
  For a codeword of weight $162$ the residual code $C'$ is a $[165,7,81]_2$ Griesmer code. We have enumerated all $53$ even $[166,7,82]_2$ codes.\footnote{Using the 
  projective dual transform with $\alpha=\tfrac{1}{2}$, $\beta=-41$, and $m=7$ these codes also correspond to $[105,7,\{32,48,64\}]_2$ codes, see \cite{bouyukliev2009classification}.} Since their 
  non-zero weights are all contained in $\{82,\dots,96,114,\dots,124,128\}$ and the maximum weight is at least $114$, the non-zero weights of $C'$ are 
  all contained in $\{81,\dots,96,113,\dots,124,127,128\}$ and the maximum weight is at least $113$.  
  Starting from the unique $[162,1,W]_2$ code we constructed all 
  $[n,2,W]_2$ codes with $n\in\{275,\dots,290\}\backslash\{287,288\}$. There are no $[n',3,W]_2$ code which are obtainable from these codes and satisfy the mentioned 
  constraints for the residual codes of codewords of weight $162$. So, $C$ cannot contain a codeword of weight $162$ -- contradiction. The overall computation took 
  less than two hours.
\end{Proof}

From Lemma~\ref{lemma_binary_code}, Lemma~\ref{lemma_no_327_8_162}, and the existence of an $[108,4,80]_4$ code\footnote{Adding an arbitrary line or a double-point to a 
$(107,8,27)$ system also yields $(108,8,28)$ systems.} we directly conclude:
\begin{Theorem}$n_4(28)=108$.\end{Theorem}

\begin{Lemma}
  \label{lemma_no_342_8_170}
  No $[342,8,170]_2$ code with maximum weight at most $228$ exists.
\end{Lemma}
\begin{Proof}
  Assume that $C$ is a $[342,8,170]_2$ code with maximum weight at most $228$. From Lemma~\ref{lemma_griesmer_divisibility} we conclude that $C$ is $2$-divisible. 
  From Lemma~\ref{lemma_residual_code} and the non-existence of the corresponding residual codes we conclude that $C$ does not have a codeword with a weight in 
  $\{178,194,202,206,208,210\}$. So, consider the set $W=\{170,172,\dots,228\}\backslash\{178,194,202,206,208,210\}$ of possible non-zero weights.
  For a codeword of weight $170$ the residual code $C'$ is a $[172,7,85]_2$ code. We have enumerated all five even $[173,7,86]_2$ codes. Since their 
  non-zero weights are all contained in $\{86,88,94,96,118,120,126,128\}$ and the maximum weight is at least $118$, the non-zero weights of $C'$ are 
  all contained in $\{85,\dots,88,93,\dots,96,117,\dots,120,125,\dots,128\}$ and the maximum weight is at least $117$. 
  Starting from the unique $[170,1,W]_2$ code we constructed all 
  $[n,2,W]_2$ codes with $n\in\{287,\dots,298\}\backslash\{291,\dots,294\}$. There are no $[n',3,W]_2$ code which are obtainable from these codes and satisfy the mentioned 
  constraints for the residual codes of codewords of weight $170$. So, $C$ cannot contain a codeword of weight $170$ -- contradiction. The overall computation took 
  less than five minutes.
\end{Proof}

From Lemma~\ref{lemma_binary_code}, Lemma~\ref{lemma_no_342_8_170}, and the existence of an $[113,4,84]_4$ code we directly conclude:
\begin{Theorem}$n_4(29)=113$.\end{Theorem}

\section*{Acknowledgments}
The author would like to thank the anonymous reviewers for their helpful comments and feedback, which improved the presentation of the paper. Moreover I am very grateful to Simeon Ball for feedback on earlier
drafts and generously sharing his insights on additive codes. 

\section*{Epilogue}
My work on additive codes was initiated by completely solving the case of dimension $k=3.5$ and finding four infinite series where the parameters of additive codes outperform those of
linear codes for an integer dimension $k$, see \url{https://arxiv.org/pdf/2410.07650v1}. Slighly before the paper was rejected I managed to generalize and formalize the underlying
techniques to the case of general additive codes, see \url{https://arxiv.org/abs/2412.14615v1}. Obviously, the referees were right that the initial 4-pager was written rather informal
and that it should be possible to obtain more results. The present paper is a compromise between this short and the extensive version with more than 100 pages, by focusing just on binary
codes and the fact that for large distances a Griesmer-type bound is the right answer to the question on the minimum possible length. After a resubmission and a revision it was rejected
due to the lack of a closed-form or asymptotic expression for the minimum possible distance, building too closely on existing methods, and not providing enough new insights,
cf.~\url{https://arxiv.org/pdf/2410.07650v3}. In the meantime I could build up upon the results of \cite{kurz2024additive} and show that a Griesmer-type bound can also always be attained
for sufficiently large minimum distances for linear codes in the so-called $b$-symbol metric, see \url{https://arxiv.org/abs/2507.07728}.
I would like to apologize to the editors and referees of IEEE Transactions on Information Theory for my produced chaos and thank them for their work.


\begin{thebibliography}{10}

\bibitem{ball2024additive}
S.~Ball, M.~Lavrauw, and T.~Popatia.
\newblock Griesmer type bounds for additive codes over finite fields, integral and fractional MDS codes.
\newblock {\em Designs, Codes and Cryptography}, 93(1):175--196, 2025.

\bibitem{bierbrauer2021optimal}
J.~Bierbrauer, S.~Marcugini, and F.~Pambianco.
\newblock Optimal additive quaternary codes of low dimension.
\newblock {\em IEEE Transactions on Information Theory}, 67(8):5116--5118,
  2021.

\bibitem{blokhuis2004small}
A.~Blokhuis and A.~E. Brouwer.
\newblock Small additive quaternary codes.
\newblock {\em European Journal of Combinatorics}, 25(2):161--167, 2004.

\bibitem{bouyukliev2009classification}
I.~Bouyukliev.
\newblock Classification of Griesmer codes and dual transform.
\newblock {\em Discrete Mathematics}, 309(12):4049--4068, 2009.

\bibitem{bouyukliev2001optimal}
I.~Bouyukliev and D.~B. Jaffe.
\newblock Optimal binary linear codes of dimension at most seven.
\newblock {\em Discrete Mathematics}, 226(1-3):51--70, 2001.

\bibitem{bouyukhev2000smallest}
I.~Bouyukliev, D.~B. Jaffe, and V.~Vavrek.
\newblock The smallest length of eight-dimensional binary linear codes with
  prescribed minimum distance.
\newblock {\em IEEE Transactions on Information Theory}, 46(4):1539--1544,
  2000.

\bibitem{bouyukliev2021computer}
I.~Bouyukliev, S.~Bouyuklieva, and S.~Kurz.
\newblock Computer classification of linear codes.
\newblock {\em IEEE Transactions on Information Theory}, 67(12):7807--7814,
  2021.

\bibitem{braun2005optimal}
M.~Braun, A.~Kohnert, and A.~Wassermann.
\newblock Optimal linear codes from matrix groups.
\newblock {\em IEEE Transactions on Information Theory}, 51(12):4247--4251, 2005.

\bibitem{dodunekov1999some}
S.~Dodunekov, S.~Guritman, and J.~Simonis.
\newblock Some new results on the minimum length of binary linear codes of
  dimension nine.
\newblock {\em IEEE Transactions on Information Theory}, 45(7):2543--2546,
  1999.

\bibitem{dodunekov1998codes}
S.~Dodunekov and J.~Simonis.
\newblock Codes and projective multisets.
\newblock {\em The Electronic Journal of Combinatorics}, 5:1--23, 1998.

\bibitem{el2011lambda}
S.~El-Zanati, G.~Seelinger, P.~Sissokho, L.~Spence, and C.~V. Eynden.
\newblock On $\lambda$-fold partitions of finite vector spaces and duality.
\newblock {\em Discrete Mathematics}, 311(4):307--318, 2011.

\bibitem{govaerts2003classifications}
P.~Govaerts.
\newblock {\em Classifications of blocking set related structures in {G}alois
  geometries}.
\newblock PhD thesis, Ghent University, 2003.

\bibitem{griesmer1960bound}
J.~H. Griesmer.
\newblock A bound for error-correcting codes.
\newblock {\em IBM Journal of Research and Development}, 4(5):532--542, 1960.

\bibitem{guan2023some}
C.~Guan, R.~Li, Y.~Liu, and Z.~Ma.
\newblock Some quaternary additive codes outperform linear counterparts.
\newblock {\em IEEE Transactions on Information Theory}, 2023.

\bibitem{10693309}
C.~Guan, J.~Lv, G.~Luo, and Z.~Ma.
\newblock Combinatorial constructions of optimal quaternary additive codes.
\newblock {\em IEEE Transactions on Information Theory}, 70(11):7690--7700,
  2024.

\bibitem{hirschfeld1998projective}
J.~W.~P. Hirschfeld.
\newblock {\em Projective geometries over finite fields}.
\newblock Oxford University Press, 1998.

\bibitem{jaffe1997brief}
D.B.~Jaffe.
\newblock A brief tour of split linear programming.
\newblock In T. Mora and H.~Mattson, editors, {\em International Symposium on Applied Algebra, Algebraic Algorithms, and Error-Correcting Codes}, LNCS volume~1255, pages 164--173. Springer, 1997.

\bibitem{krotov2023multispreads}
D.~S. Krotov and I.~Y. Mogilnykh.
\newblock Multispreads.
\newblock {\em Finite Fields and their Applications}, 108:102675(1--25), 2025.

\bibitem{kurz2024additive}
S.~Kurz.
\newblock Additive codes attaining the {G}riesmer bound.
\newblock {\em arXiv preprint 2412.14615}, 2024.

\bibitem{kurz2024computer}
S.~Kurz.
\newblock Computer classification of linear codes based on lattice point
  enumeration.
\newblock In {\em International Congress on Mathematical Software}, pages
  97--105. Springer, 2024.

\bibitem{kurz2025bsymbol}
S.~Kurz.
\newblock Linear codes for b-symbol read channels attaining the Griesmer bound.
\newblock {\em arXiv preprint 2507.07728}, 2025.

\bibitem{macwilliams1977theory}
F.~J. MacWilliams and N.~J.~A. Sloane.
\newblock {\em The theory of error-correcting codes}, volume~16.
\newblock Elsevier, 1977.

\bibitem{mateva2009line}
Z.T.~Mateva and S.T.~Topalova.
\newblock Line spreads of $\operatorname{PG}(5, 2)$.
\newblock {\em Journal of Combinatorial Designs}, 17(1):90--102, 2000.

\bibitem{sheekey201913}
J.~Sheekey.
\newblock {M}{R}{D} codes: constructions and connections.
\newblock In K.-U. Schmidt and A.~Winterhof, editors, {\em Combinatorics and
  finite fields: Difference sets, polynomials, pseudorandomness and
  applications}, volume~23, pages 255--286. de Gruyter, 2019.

\bibitem{simonis1995macwilliams}
J.~Simonis.
\newblock {M}ac{W}illiams identities and coordinate partitions.
\newblock {\em Linear Algebra and its Applications}, 216:81--91, 1995.

\bibitem{solomon1965algebraically}
G.~Solomon and J.~J. Stiffler.
\newblock Algebraically punctured cyclic codes.
\newblock {\em Information and Control}, 8(2):170--179, 1965.

\bibitem{tsfasman1991agcodes}
M.~Tsfasman and S.~Vl{\u{a}}du{\c{t}}.
\newblock {\em Algebraic-Geometric Codes}.
\newblock Mathematics and its Applications. Springer Dordrecht, 1991.

\bibitem{ward1998divisibility}
H.~N. Ward.
\newblock Divisibility of codes meeting the {G}riesmer bound.
\newblock {\em Journal of Combinatorial Theory, Series A}, 83(1):79--93, 1998.

\end{thebibliography}


 
\end{document}